\newtheorem{theorem}{Theorem}[section]
\newtheorem{lemma}[theorem]{Lemma}         
\newtheorem{proposition}[theorem]{Proposition}    
\newtheorem{corollary}[theorem]{Corollary}         
\theoremstyle{definition}
\newtheorem{definition}[theorem]{Definition}   
\newtheorem{example}[theorem]{Example}
\newcommand{\bbR}{\mathbb{R}}
\newcommand{\bbV}{\mathbb{V}}
\newcommand{\bbF}{\mathbb{F}}
\newcommand{\bbH}{\mathbb{H}}
\newcommand{\bbP}{\mathbb{P}}
\newcommand{\bbZ}{\mathbb{Z}}
\newcommand{\bbN}{\mathbb{N}}
\newcommand{\bbQ}{\mathbb{Q}}
\newcommand{\bbT}{\mathbb{T}}
\newcommand{\sM}{M^{\ast}}
\newcommand{\cM}{\mathfrak{M}}
\newcommand{\scM}{\mathfrak{M}^{\ast}}
\newcommand{\cU}{\mathcal{U}}
\newcommand{\cP}{\mathcal{P}}
\newcommand{\cD}{\mathcal{D}}
\newcommand{\cG}{\mathcal{G}}
\newcommand{\cT}{\mathcal{T}}
\newcommand{\fN}{\mathfrak{N}}
\newcommand{\fL}{\mathfrak{L}}
\newcommand{\imp}{\rightarrow}
\newcommand{\eqi}{\longleftrightarrow}
\newcommand{\ra}{\rangle}
\newcommand{\la}{\langle}
\newcommand{\uh}{\!\upharpoonright\!}
\newcommand{\st}{\operatorname{\mathbf{st}}}
\newcommand{\insl}{\operatorname{\mathbf{in}}}
\newcommand{\fin}{\operatorname{\mathbf{fin}}}
\newcommand{\dom}{\operatorname{dom}}
\newcommand{\rank}{\operatorname{rank}}
\newcommand{\infy}{\operatorname{\mathbf{aa}}}
\newcommand{\sh}{\operatorname{\mathbf{sh}}}
\newcommand{\ZF}{\mathbf{ZF}}
\newcommand{\ZFC}{\mathbf{ZFC}}
\newcommand{\ZFc}{\mathbf{ZF}c}
\newcommand{\SPOT}{\mathbf{SPOT}}
\newcommand{\BSPT}{\mathbf{BSPT}}
\newcommand{\BSCT}{\mathbf{BSCT}}
\newcommand{\ISPT}{\mathbf{ISPT}}
\newcommand{\SCOT}{\mathbf{SCOT}}
\newcommand{\ISCT}{\mathbf{ISCT}}
\newcommand{\BST}{\mathbf{BST}}
\newcommand{\IST}{\mathbf{IST}}
\newcommand{\HST}{\mathbf{HST}}
\newcommand{\T}{\mathbf{T}}
\newcommand{\B}{\mathbf{B}}
\newcommand{\BI}{\mathbf{BI}}
\newcommand{\ACC}{\mathbf{ACC}}
\newcommand{\ADC}{\mathbf{ADC}}
\newcommand{\SP}{\mathbf{SP}}
\newcommand{\SC}{\mathbf{SC}}
\newcommand{\SN}{\mathbf{SN}}
\newcommand{\N}{\mathbf{O}}
\newcommand{\AC}{\mathbf{AC}}
\newcommand{\CH}{\mathbf{CH}}
\newcommand{\CC}{\mathbf{CC}}
\newcommand{\UP}{\mathbf{UP}}
\newcommand{\pmba}{\raisebox{-6pt}{\begin{tikzpicture}[scale=.3]
\draw [dashed, decorate, decoration={brace, amplitude=5pt}] (0,0) -- (0,2);
\end{tikzpicture}}}
\newcommand{\pmbb}{\raisebox{-6pt}{\begin{tikzpicture}[scale=.3]
\draw [dashed, decorate, decoration={brace, amplitude=5pt}] (2,2) -- (2,0);
\end{tikzpicture}}}
\newcommand{\pmbaa}{\pmba\hspace{-5pt}\pmba}
\newcommand{\pmbbb}{\pmbb\hspace{-5pt}\pmbb}
\newcommand{\deltast}{\mathbf{\Delta\!}^{\st}}
\subjclass[2020]{Primary 26E35,
Secondary 03A05,
 03C25,
 03C62,
03E70,
 03H05}
\begin{document}\title {Infinitesimal analysis without the Axiom of Choice}
\author{Karel Hrbacek}
\address{Department of Mathematics\\                City College of CUNY\\                New York, NY 10031\\}                \email{khrbacek@icloud.com}

\author{Mikhail G. Katz}
\address{Department of Mathematics\\    Bar Ilan University\\       Ramat Gan 5290002 Israel\\}                \email{katzmik@math.biu.ac.il}

\keywords{nonstandard analysis; axiom of choice; ultrafilter; forcing; extended ultrapower}
\date{February 9, 2021}

\begin{abstract}
It is often claimed that analysis with infinitesimals requires more
substantial use of the Axiom of Choice than traditional elementary
analysis.  The claim is based on the observation that the hyperreals
entail the existence of nonprincipal ultrafilters over $\bbN$, a
strong version of the Axiom of Choice, while the real numbers can be
constructed in $\ZF$.  The axiomatic approach to nonstandard methods
refutes this objection.  We formulate a theory $\SPOT$ in the
$\st$-$\in$-language which suffices to carry out infinitesimal
arguments, and prove that $\SPOT$ is a conservative extension of
$\ZF$. Thus the methods of Calculus with infinitesimals are just as
effective as those of traditional Calculus.  The
conclusion extends to large parts of ordinary mathematics and beyond.
We also develop a stronger axiomatic system $\SCOT$, conservative over
$\ZF+\ADC$, which is suitable for handling such features as an
infinitesimal approach to the Lebesgue measure.  Proofs of the
conservativity results combine and extend the methods of forcing
developed by Enayat and Spector.
\end{abstract}

\maketitle

\tableofcontents

\section{Introduction} 
\label{Intro}
Many branches of mathematics exploit the Axiom of Choice ($\AC$) to
one extent or another.  It is of considerable interest to gauge how
much the Axiom of Choice can be weakened in the foundations of
nonstandard analysis.  Critics of analysis with infinitesimals
often claim that nonstandard methods require more substantial use of
$\AC$ than their standard counterparts. The goal of this paper is to
refute such a claim.

\subsection{Axiom of Choice in Mathematics}\label{axofchoice}
We begin by considering the extent to which $\AC$ is needed in traditional non-infinitesimal mathematics.
Simpson~\cite{Si} introduces a useful distinction between \emph{set-theoretic} mathematics  and \emph{ordinary} or \emph{non-set-theoretic} mathematics. The former includes such disciplines as general topology, abstract algebra and  functional analysis. It is well known that fundamental theorems in these areas require strong versions of $\AC$. Thus 
\begin{itemize}
\item
Tychonoff's Theorem in general topology is equivalent to full $\AC$
(over Zermelo-Fraenkel set theory $\ZF$).
\item
Prime Ideal Theorem asserts that every ring with unit has a (two-sided) prime ideal.  $\textbf{PIT}$ is an essential result in abstract algebra and is ``almost'' as strong as $\AC$ (it is equivalent over $\ZF$ to Tychonoff's Theorem for Hausdorff spaces). It is also equivalent to the Ultrafilter Theorem: Every proper filter over a set $S$  (i.e., in $\cP(S)$) can be extended to an ultrafilter.
\item
Hahn-Banach Theorem for general vector spaces is equivalent to the statement that every Boolean algebra admits a real-valued measure, a form of $\AC$ that is somewhat weaker than $\textbf{PIT}$. 
\end{itemize} 
Jech~\cite{J1} and Howard and Rubin~\cite{Ho} are comprehensive references for the relationships between these and many other forms of $\AC$.

Researchers in set-theoretic mathematics have to accept strong forms of $\AC$ as legitimate whether or not they use nonstandard methods. 
Our concern here is with ordinary mathematics, which, according to Simpson, includes fields such as the Calculus, countable algebra, differential equations, and real and complex analysis. It is often felt that  results in these fields should be effective in the sense of not being dependent on $\AC$. 
However, even these branches of mathematics cannot do entirely without $\AC$. There is a number of fundamental classical results that rely on it; they include 
\begin{itemize}
\item
the equivalence of continuity and sequential continuity for real-valued functions on $\bbR$;
\item
 the equivalence of the $\varepsilon$-$\delta$ definition and the sequential definition of  closure points for subsets of $\bbR$;
\item
closure of the collection of Borel sets under countable unions and intersections;
\item
countable additivity of Lebesgue measure. 
\end{itemize}
Without an appeal to $\AC$ one cannot even prove that $\bbR$ is not a union of countably many countable sets, or that a strictly positive
function cannot have vanishing Lebesgue integral (Kanovei and Katz~\cite{KK}).  However, these results follow already from $\ACC$, the Axiom of Choice for Countable collections, a weak version of $\AC$ that many  mathematicians use without even noticing.\footnote{For example Halmos~\cite{Ha}, p.\;42; see~\cite{Katz1}, Sec.\;5.7 for further discussion.}
Nevertheless, it is true enough that no choice is needed to define the real number system itself, or to develop the Calculus and much of ordinary mathematics.

It has to be emphasized that objections to $\AC$ are not a matter of ontology, but of epistemology.
In other words, the issue is not the existence of objects, but proof techniques and procedures.
For better or worse, many mathematicians nowadays believe that the objects of interest to them can be represented by set-theoretic structures in a universe that satisfies $\ZFC$, Zermelo-Fraenkel set theory with the Axiom of Choice. 
Nevertheless,  they may prefer results that are effective, that is, do not use $\AC$. For the purposes of this discussion, mathematical results are \emph{effective} if they can be proved in $\ZF$.  Much of ordinary mathematics is effective in this sense. 

We now consider whether nonstandard methods require anything more.
A common objection to infinitesimal methods in the Calculus is the claim that the mere existence of the hyperreals%
\footnote{By the hyperreals we mean a proper elementary extension of
the reals, i.e., a proper extension that satisfies Transfer.  The
definite article is used merely for grammatical correctness.  Subsets
of $\bbN$ can be identified with real numbers; see $\SP \Rightarrow
\SP'$ in the proof of Lemma~\ref{spandsp'} for one way to do that.}
implies the existence of a nonprincipal ultrafilter $\cU$ over $\bbN$. 
The proof is simple: Fix an infinitely large integer $\nu$ in ${}^{\ast}\bbN \setminus \bbN$ and define $\cU \subseteq \cP(\bbN)$ by
$X \in \cU \eqi \nu \in  {}^{\ast}\!X$, for $X \subseteq \bbN$. 
It is easy to see that $\cU $ is a nonprincipal ultrafilter over $\bbN$.
For example, if $X \cup Y \in \cU$, then $\nu \in {}^{\ast} (X \cup Y) = {}^{\ast}\!X \cup \,{}^{\ast}Y$, where the last step is by the Transfer Principle. Hence either  $\nu \in {}^{\ast}\!X$ or  $\nu \in {}^{\ast}Y$, and so $X \in \cU $ or $Y \in \cU$. If $X$ is finite, then $X =  {}^{\ast}\!X$, hence $\nu \notin  {}^{\ast}\!X$ and so $\cU $ is nonprincipal.

By the well-known result of Sierpi\'{n}ski~\cite{Sp} (see also Jech~\cite{J1}, Problem 1.10), $\cU$ is a non-Lebesgue-measurable set (when subsets of $\bbN$ are identified with real numbers in some natural way). In the celebrated model of Solovay~\cite{So}, $\ZF +\ACC$ holds (even the stronger $\ADC$, the Axiom of Dependent Choice, holds there), but all sets of real numbers are Lebesgue measurable, hence there are no nonprincipal ultrafilters over $\bbN$ in this model. The existence of nonprincipal ultrafilters over $\bbN$ requires a strong version of $\AC$ such as $\textbf{PIT}$; it cannot be proved in $\ZF$ (or even $\ZF +\ADC$).

\subsection{Countering the objection}

How can such an objection be answered? As in the case of the
traditional mathematics, the key is to look not at the objects but at
the methods used.  
Currently there are two popular ways to practice Robinson's nonstandard analysis: the
model-theoretic approach and the axiomatic/syntactic approach.
Analysis with infinitesimals does not have to be
based on hyperreal structures in the universe of $\ZFC$. It can be
developed axiomatically; the monograph by Kanovei and Reeken~\cite{KR}
is a comprehensive reference for such approaches. Internal axiomatic
presentations of nonstandard analysis, such as $\textbf{IST}$ or
$\textbf{BST}$, extend the usual $\in$-language of set theory by a
unary predicate $\st$ ($\st(x)$ reads $x$ \emph{is standard}).  For
reference, the axioms of $\BST$ are stated in
Section~\ref{idealization}.

It is of course possible to weaken $\ZFC$ to $\ZF$ within $\BST$ or $\IST$, but this move by itself does not answer the above objection.
It is easily seen, by a modification of the argument given above for hyperreals, that the theory obtained from $\textbf{BST}$ or $\IST$ by replacing $\ZFC$ by $\ZF$ proves $\textbf{PIT}$ (Hrbacek~\cite{H1}).
This argument uses the full strength of the principles of Idealization and Standardization (see Section~\ref{idealization}).
However, Calculus with infinitesimals can be fully carried out assuming much less.
Examination of texts such as Keisler~\cite{K1} and Stroyan~\cite{St} reveals that only very weak versions of these principles are ever used there. Of course one has to postulate that infinitesimals exist (Nontriviality), but stronger consequences of Idealization are not needed. As for Standardization, these textbooks only explicitly postulate a special consequence of it, namely, the following principle:
\begin{enumerate}
\item[$\SP$] (\emph{Standard Part}) Every limited real is infinitely
close to a standard real;
\end{enumerate}
see Keisler~\cite{K1,K2}, Axioms A - E.\, However, this is somewhat misleading. Keisler does not develop the Calculus from his axioms alone; they describe some properties of the hyperreals, but the hyperreals are considered to be an extension of the field $\bbR$ of real numbers  in the universe of $\ZFC$, and the principles of $\ZFC$ can be freely used. In particular, the principle of Standardization is not an issue;  it is automatically satisfied for any formula. While Standardization for  formulas about integers appears innocuous, Standardization for formulas about reals can lead to the existence of nonprincipal  ultrafilters. 
On the other hand, some instances of Standardization over the reals are unavoidable, for example to prove the existence of the function $f'$ (the derivative of $f$) defined in terms of infinitesimals for a given real-valued function $f$ on~$\bbR$.

\subsection{$\SPOT$  and $\SCOT$}
\label{s13}

In the present text, we introduce a theory $\SPOT$ in the
$\st$-$\in$-language, a subtheory of $\textbf{IST}$ and
$\textbf{BST}$, and we show that $\SPOT$ proves Countable Idealization
and enough Standardization for the purposes of the Calculus.  We use
$\forall$ and $\exists$ as quantifiers over sets and $\forall^{\st}$
and $\exists^{\st}$ as quantifiers over standard sets.  The axioms of
$\SPOT$ are:

\bigskip
$\ZF$ (Zermelo - Fraenkel Set Theory)

\bigskip
$\T$ (Transfer) 
Let $\phi$ be an $\in$-formula with standard parameters. Then
$$\forall^{\st} x\; \phi(x)  \imp  \forall x\; \phi(x).$$

$\N$ (Nontriviality)  \quad
$\exists \nu \in \bbN\; \forall^{\st} n \in \bbN\; (n \ne \nu)$.

\bigskip
$\SP'$  (Standard Part)
$$ \forall  A \subseteq \bbN \; \exists^{\st} B \subseteq \bbN \; \forall^{\st}  n \in \bbN \;
(n \in B \eqi n \in A). $$

Our main result is the following.

\bigskip
\textbf{Theorem\;A}\;
\emph{The theory\, $\SPOT$ is a conservative extension of\, $\ZF$.}

\bigskip
Thus the methods used in the Calculus with infinitesimals do not require any appeal to the Axiom of Choice.

The result allows significant strengthenings. We let $\SN$ be the
Standardization principle for $\st$-$\in$-formulas with no
parameters (see  Section~\ref{forcingexternal}). The principle allows Standardization of much more complex
formulas than $\SPOT$ alone.

\bigskip
\textbf{Theorem B}\;
\emph{The theory\, $\SPOT + \SN$ is a conservative extension of $\ZF$.} 

\bigskip
It is also possible to add some Idealization. We let $\BI'$ be Bounded
Idealization (see Section~\ref{idealization}) for $\in$-formulas with
standard parameters.

\bigskip
\textbf{Theorem C}\; \emph{The theory\, $\SPOT + \B + \BI'$ is a conservative extension of $\ZF$. } 

\bigskip
This is  the theory $\BST$ with $\ZFC$ replaced by $\ZF$, Standardization weakened to $\SP$ and Bounded Idealization weakened to $\BI'$; we denote it $\BSPT'$.\label{bspt'}
This theory enables the applicability of some infinitesimal techniques to arbitrary topological spaces.  It also  proves that there is a finite set $S$ containing all standard reals, a frequently used idea.

As noted above, some important results in elementary analysis and elsewhere in ordinary mathematics require the Axiom of Countable Choice.  On the other hand, $\ACC$ entails no ``paradoxical'' consequences,  
such as the existence of Lebesgue-non-measurable sets, or the existence of an additive function on $\bbR$ different from  $f_a \colon x \mapsto ax$ for all $a \in \bbR$.
Many mathematicians find $\ACC$ acceptable. These considerations apply as well to the following stronger 
axiom.

\bigskip
$\ADC$ (Axiom of Dependent Choice)
If $R$ is a binary relation on a  set $A$ such that $\forall a \in A\; \exists a' \in A \; (a Ra')$, then  for every $a \in A$ there exists a sequence $\la a_n \,\mid\, n \in \bbN \ra$ such that $a_0 = a$ and $ a_n R a_{n+1} $ for all $n \in \bbN$. 

\bigskip
This axiom is needed for example to prove the equivalence of the two definitions of a well-ordering
(Jech~\cite{J2}, Lemma 5.2):

(1) Every nonempty subset of a linearly ordered set $(A, <)$ has a least element. 

(2) $A$ has no infinite decreasing sequence $a_0 > a_1 > \ldots > a_n > \ldots$. 

\bigskip
We denote by $\ZFc$ (``$\ZFC$ Lite'') the theory  $ \ZF + \ADC$. This theory is sufficient for axiomatizing ordinary mathematics (and many results of set-theoretic mathematics as well). 
Let $\SCOT$ be the theory obtained from $\SPOT$ by  strengthening $\ZF$ to $\ZFc$, $\SP$ to Countable
$\st$-$\in$-Choice  ($\CC$), and adding $\SN$; see Section~\ref{scot}.

\bigskip
\textbf{Theorem D}
\;The theory $\SCOT $ is a conservative extension of $\ZFc$. 

\bigskip
In $\SCOT$ one can carry out most techniques used in infinitesimal treatments of ordinary mathematics.
As examples, we give a proof of Peano's Existence Theorem and an infinitesimal construction of Lebesgue measure  in Section~\ref{scot}.
Thus the nonstandard methods used in ordinary mathematics do not require any more choice than is generally accepted in traditional ordinary mathematics.

Further related conservative extension theorems can be found in Sections~\ref{stand},~\ref{forcingexternal} and~\ref{idealization}.

\begin{table}
\def\drawing {
\[
\begin{tabular}[t]{|
@{\hspace{3pt}}p{.8in}|| 
@{\hspace{3pt}}p{.7in}|
@{\hspace{3pt}}p{.74in}| 
@{\hspace{3pt}}p{.7in}| 
@{\hspace{3pt}}p{.6in}| 
@{\hspace{3pt}}p{.7in}| 
@{\hspace{3pt}}p{.6in}|
} 
\hline  ~~ & SPOT & SPOT+SN & SCOT & BSPT$'$ & BSCT$'$ & BST
\\ 
\hline\hline $\in$-theory & ZF & ZF & ZF+ADC & ZF & ZF+ADC & ZFC
\\
\hline Transfer & yes & yes & yes & yes & yes & yes
\\ 
\hline Idealization & countable & countable & countable & standard
params & standard params & full
\\ 
\hline Standardiz. & SP & SP;  \mbox{standard} params & SC;
\mbox{standard} params & SP & SC & full
\\ 
\hline Countable st-$\in$ Choice & no & no & yes & no & yes & Standard -size
 Choice 
\\ \hline
\end{tabular}
\]
}
\drawing
\caption{\textsf{Theories and their properties}}
\label{t6111}
\end{table}

\bigskip
\section{Theory $\SPOT$ and Calculus with infinitesimals} 
\label{starzf}

\subsection{Some consequences of $\SPOT$}
The axioms of $\SPOT$ were given in Section~\ref{s13}.

\begin{lemma}\label{downclosed} The theory $\SPOT$ proves the following:
$$\forall^{\st} n \in \bbN \; \forall m \in \bbN \;(m < n \imp \st(m) ).$$
\end{lemma}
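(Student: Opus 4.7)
The plan is to argue by contradiction, using $\SP'$ to convert the assumed failure of the lemma into an internal standard set whose order-theoretic structure can then be analyzed via Transfer. The tempting strategy of external induction on $n$ for the $\st$-$\in$-formula $\forall m < n \; \st(m)$ is not directly available in $\SPOT$, since $\SP'$ standardizes only internal subsets of $\bbN$.

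Suppose toward contradiction that some standard $n \in \bbN$ admits an $m_0 < n$ with $\neg\st(m_0)$. Applying $\SP'$ to the internal set $\{m \in \bbN : m \leq m_0\}$ yields a standard $B \subseteq \bbN$ with
$$\forall^{\st} k \in \bbN \; (k \in B \eqi k \leq m_0).$$
The first key step is to show $B$ is downward closed in $\bbN$: for standard $j \leq k$ with $k \in B$, the characterization gives $k \leq m_0$, hence $j \leq m_0$, hence $j \in B$. Since downward closure of $B$ is an $\in$-formula with the standard parameter $B$, Transfer promotes it from standards to all of $\bbN$.

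In $\ZF$ any downward-closed subset of $\bbN$ is either $\bbN$ itself or an initial segment $K = \{0, 1, \ldots, K-1\}$ for some $K \in \bbN$, and since $B$ is standard the corresponding $K$ must be standard as well (by the backward direction of Transfer applied to the unique $k$ with $k = \min(\bbN \setminus B)$). The case $B = \bbN$ forces $n \leq m_0$, contradicting $m_0 < n$. The case $K = 0$ is excluded because $0$ is standard ($\emptyset$ is definable without parameters) and $0 \leq m_0$, so $0 \in B = \emptyset$. In the remaining case $K \geq 1$, the predecessor $K - 1$ is standard by Transfer (as the unique $k$ with $k + 1 = K$); then $K - 1 \in B$ gives $K - 1 \leq m_0$, while $K \notin B$ with $K$ standard gives $K > m_0$, so $m_0 = K - 1$ is standard, contradicting $\neg\st(m_0)$.

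The main obstacle is choosing the right internal set to feed to $\SP'$: the hypothetical nonstandard $m_0$ must itself enter as a parameter, so that the standard shadow $B$ retains enough order information to force, via downward closure and Transfer, that $m_0$ coincides with the standard natural $K - 1$.
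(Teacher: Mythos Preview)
Your proof is correct and follows essentially the same line as the paper's: apply $\SP'$ to the initial segment of $\bbN$ determined by $m$, obtain a standard set $B$, and use Transfer to pin $m$ to the boundary of $B$, which is then standard. The only cosmetic differences are that the paper uses $\{k : k < m\}$ and, after showing via Transfer that $B$ is bounded above by $n$, takes $k_0 = \max B$ directly, whereas you use $\{k : k \leq m_0\}$, establish downward closure of $B$, and work with $K = \min(\bbN \setminus B)$.
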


\begin{proof}
Given a standard $n \in \bbN$ and $m < n$, let $A =\{k \in \bbN \,\mid\, k < m\}$.
By $\SP'$ there is a standard $B \subseteq \bbN$ such that for all standard  $k$, $k \in B $ iff $k \in A$ iff $ k < m$. The set $B \subseteq \bbN$ is bounded above by $n$ (Transfer), so it has a greatest element $k_0$ ($<$ is a well-ordering of $\bbN$) , which is standard by Transfer. Now we have $k_0 < m$ and $k_0 + 1 \nless m$, so $k_0 + 1 = m $ and $m $ is standard.
\end{proof}

\begin{lemma} (Countable Idealization)\label{countideal} 
Let $\phi$ be an $\in$-formula with arbitrary parameters.
The theory\, $\SPOT$ proves the following:
$$\forall^{\st}  n \in \bbN\;\exists x\; \forall m \in \bbN\; (m \le n \;\imp  \phi(m,x))\eqi 
\exists x \; \forall^{\st} n \in \bbN \; \phi(n,x).$$
\end{lemma}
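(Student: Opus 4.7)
The $\Leftarrow$ direction follows at once from Lemma~\ref{downclosed}: if $x$ witnesses the RHS and $n$ is standard, then every $m \le n$ is itself standard, so $\phi(m,x)$ holds.

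For $\Rightarrow$, let $\bar p$ denote the (possibly nonstandard) parameters of $\phi$ and consider the set
$$A = \{n \in \bbN \,\mid\, \exists x\; \forall m \in \bbN\; (m \le n \imp \phi(m, x, \bar p))\}.$$
This is a set by Separation. A witness for $n \in A$ also witnesses $n' \in A$ for every $n' \le n$, so $A$ is downward closed in $\bbN$. The hypothesis amounts to saying that every standard natural lies in $A$. The crux is to exhibit a \emph{nonstandard} $\nu \in A$. Being downward closed in $(\bbN,<)$, the set $A$ is either all of $\bbN$ or an initial segment $\{0, 1, \ldots, k\}$. In the first case, any $\nu$ supplied by the Nontriviality axiom $\N$ is nonstandard and belongs to $A$. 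In the second case, $k$ itself must be nonstandard: otherwise $k+1$ would also be standard by Transfer applied (in contrapositive form $\exists y\;\psi(y)\imp\exists^{\st} y\;\psi(y)$) to the $\in$-formula $y = k+1$ with the standard parameter $k$; this would force $k+1 \in A$, contradicting $A = \{0, 1, \ldots, k\}$.

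Given such a nonstandard $\nu \in A$, fix a witness $x$, so that $\phi(m, x, \bar p)$ holds for every $m \le \nu$. For any standard $n$, Lemma~\ref{downclosed} rules out $n > \nu$ (which would make $\nu$ standard), hence $n \le \nu$ and $\phi(n, x, \bar p)$ follows. The principal obstacle is that $\phi$ is permitted to contain nonstandard parameters, so one cannot invoke Transfer to lift the hypothesis from standard $n$ directly to an arbitrary nonstandard $\nu$. The argument sidesteps this by extracting the nonstandard $\nu$ purely from the internal structure of $A$, namely downward closure plus the well-ordering of $\bbN$, combined with the ``successor of a standard natural is standard'' corollary of Transfer.
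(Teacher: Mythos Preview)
Your proof is correct and follows essentially the same approach as the paper's. Both arguments form the internal set $A$ (the paper calls it $S$) by Separation, observe that it contains all standard naturals, and then extract a nonstandard element from it using the well-ordering of $\bbN$ together with the fact that the successor of a standard natural is standard; the paper phrases this last step as ``least element of $\bbN \setminus S$'' leading to a contradiction, while you phrase it via the dichotomy $A = \bbN$ versus $A = \{0,\ldots,k\}$, but these are the same argument in slightly different dress.
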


\begin{proof}
If $ \forall^{\st} n \in \bbN \; \phi(n,x)$, then, for every standard $ n \in \bbN$, $ \forall m \in \bbN\; (m \le n \imp \phi(m,x) )$, by Lemma~\ref{downclosed}.

Conversely, assume   $\forall^{\st}  n \in \bbN\;\exists x\; \forall m \in \bbN\; (m \le n \;\imp  \phi(m,x))$.
By the Axiom of Separation of $\ZF$, there is a set 
\[
S = \{ n \in \bbN \mid \exists x \; \forall m \in \bbN \; (m \le n \imp \phi(m,x))\},
\]
and the assumption implies that $\forall^{\st} n \in \bbN\; (n \in S)$.

Assume that $S$ contains standard integers only. Then $\bbN \setminus S \neq \emptyset$ by the axiom $\N$. Let $\nu$ be the least element of $\bbN \setminus S$. Then $\nu$ is nonstandard but $\nu - 1$ is standard, a contradiction.

Let $\mu$ be some nonstandard element of $S$. We have $\exists x\;\forall m \in \bbN\; ( m \le \mu \imp \phi(m,x))$; as $n \le \mu$ holds for all standard $n \in \bbN$, we obtain $\exists x\; \forall^{\st} n \in \bbN \; \phi(n,x)$.
\end{proof}

Countable Idealization easily implies the following more familiar form. We use  $\forall^{\st \fin}$ and $\exists^{\st \fin}$ as quantifiers
over standard finite sets.

\begin{corollary} 
Let $\phi$ be an $\in$-formula with arbitrary parameters.  The
theory~\,$\SPOT$ proves the following: For every standard countable
set~$A$
\[
\forall^{\st \fin} a \subseteq A \, \exists x \, \forall y \in a\;
\phi(x,y) \eqi \exists x\, \forall^{\st} y \in A\; \phi(x,y) .
\]
\end{corollary}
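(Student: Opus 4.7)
The plan is to reduce the statement to Countable Idealization (Lemma~\ref{countideal}) via a standard enumeration $f\colon \bbN \to A$ of the standard countable set $A$. Since the $\ZF$-theorem ``there exists a surjection $\bbN \to A$'' has only the standard parameter $A$, Transfer produces such an $f$ that is itself standard.

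The $(\Leftarrow)$ direction is immediate. If $x$ witnesses $\forall^{\st} y \in A\; \phi(x,y)$ and $a \subseteq A$ is standard finite, then every element of $a$ is standard: standard finiteness yields a standard $k \in \bbN$ and a standard bijection $g\colon \{0,\dots,k-1\} \to a$; by Lemma~\ref{downclosed} every $i < k$ is standard, and by Transfer $g(i)$ is standard. Hence $\phi(x,y)$ holds for every $y \in a$.

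For the $(\Rightarrow)$ direction, consider the $\in$-formula $\psi(n, x) := \phi(x, f(n))$, whose parameters are those of $\phi$ together with the standard object $f$. For any standard $n \in \bbN$, the set $\{f(0), \ldots, f(n)\} = f[\{0,\dots,n\}]$ is standard finite (standard by Transfer from the standard $f$ and $n$), so the hypothesis supplies an $x$ with $\phi(x, f(m))$ for all $m \le n$. The left-hand side of Countable Idealization thus holds for $\psi$, and Lemma~\ref{countideal} yields a single $x$ with $\forall^{\st} n \in \bbN\; \phi(x, f(n))$.

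To finish, each standard $y \in A$ has a standard preimage under $f$: the least $n$ with $f(n) = y$ is characterized by an $\in$-formula with the standard parameters $f$ and $y$, hence is standard by Transfer. Consequently $\phi(x, y)$ holds for every standard $y \in A$, completing the proof. No step presents a genuine obstacle; the only care needed is to invoke Transfer correctly at each passage between standard and nonstandard objects --- to obtain the standard surjection $f$, to observe that $\{f(0),\ldots,f(n)\}$ is standard finite, and to see that standard elements of $A$ have standard preimages under $f$.
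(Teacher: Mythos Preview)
Your proof is correct and follows precisely the approach the paper intends: the paper merely states that the corollary ``easily'' follows from Countable Idealization (Lemma~\ref{countideal}), and your reduction via a standard surjection $f\colon\bbN\to A$ is the natural way to carry this out. The care you take with Transfer (obtaining a standard $f$, showing $\{f(0),\ldots,f(n)\}$ is standard finite, and producing standard preimages) is exactly what is needed to make the ``easy'' step rigorous.
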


The axiom $\SP'$ is often stated and used in the
form
\begin{equation}
 \forall x \in \bbR \;(x \text{ limited } \imp \exists^{\st} r \in \bbR \;(x \approx r)) \tag{$\SP$}
\end{equation}
where $x$ is \emph{limited} iff $|x| \le n$ for some standard $n \in \bbN$, and $x \approx r$ iff $|x - r| \le 1/n $ for all standard $n \in \bbN$, $n \neq 0$. 
The unique standard real number $r$ is called the \emph{standard part of} $x$ or the \emph{shadow of} $x$; notation $\sh(x)$.

We note that in
the statement of $\SP'$, $\bbN$ can be replaced by any countable
standard set $A$.  

\begin{lemma}\label{spandsp'}
The statements $\SP' $ and $ \SP$ are equivalent (over $\ZF + \N + \T$).
\end{lemma}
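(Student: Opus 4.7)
The plan is to transfer between subsets of $\bbN$ and real numbers via the Cantor-set encoding
\[
f : \cP(\bbN) \to [0,1], \qquad f(X) = \sum_{n \in X} 2 \cdot 3^{-(n+1)},
\]
a standard injection whose image is the standard Cantor set $C$ (uniqueness of base-$3$ expansions with digits in $\{0,2\}$). The key quantitative input is the gap estimate, a geometric tail bound: if $X \ne Y$ and $n_0$ is the least index where $\chi_X$ and $\chi_Y$ disagree, then $|f(X) - f(Y)| \geq 3^{-(n_0+1)}$.

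For $\SP \Rightarrow \SP'$, given $A \subseteq \bbN$ the real $f(A) \in C$ is limited, so $\SP$ yields a standard $s \approx f(A)$. I would first show $s \in C$: otherwise $[0,1] \setminus C$ is a standard open set containing $s$, and Transfer applied to the $\in$-formula ``$\exists \delta > 0\,((s-\delta, s+\delta) \cap C = \emptyset)$'' (standard parameters $s$, $C$) produces a standard such $\delta$, contradicting $f(A) \in C$ together with $|f(A) - s| < \delta$. Set $B := f^{-1}(s)$, which is standard. If $A$ and $B$ disagreed at some standard $m$, the least disagreement index $n_0$ would satisfy $n_0 \leq m$, and the gap estimate would force $|f(A) - f(B)| \geq 3^{-(n_0+1)} \geq 3^{-(m+1)}$, a standard positive quantity contradicting the infinitesimality of $f(A) - f(B) = f(A) - s$. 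Hence $\forall^{\st} n \in \bbN \,(n \in A \eqi n \in B)$.

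For $\SP' \Rightarrow \SP$, given limited $x \in \bbR$ with $|x| \leq N$ ($N$ standard), fix a standard bijection $q : \bbN \to \bbQ$, set $A := \{n \in \bbN : q_n < x\}$, and apply $\SP'$ to obtain a standard $B$ agreeing with $A$ on standard indices. Define the standard real
\[
r := \sup\{q_n : n \in B \text{ and } q_n < N+1\},
\]
which is nonempty since the (standard) index of $-N-1$ lies in $A$, hence in $B$. To verify $r \approx x$, fix standard $\varepsilon > 0$ (small enough that $r + \varepsilon < N+1$) and pick standard rationals $p_1 \in (r - \varepsilon, r)$ and $p_2 \in (r, r + \varepsilon)$. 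Since $p_1 < r$ is not an upper bound, the $\in$-formula ``$\exists n \in \bbN\,(n \in B \wedge p_1 < q_n < N+1)$'' holds with standard parameters, so Transfer yields a standard witness $n$; then $n \in A$, giving $p_1 < q_n < x$. Conversely, the standard index $n$ with $q_n = p_2$ cannot lie in $B$ (else $p_2 \leq r$), hence $n \notin A$, so $p_2 \geq x$. Therefore $r - \varepsilon < x \leq p_2 < r + \varepsilon$.

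The main obstacle is the first direction: extracting a \emph{standard} subset of $\bbN$ from a single standard real requires an encoding with unique representation, and a naive dyadic encoding would fail at dyadic rationals (two binary expansions). The Cantor-set encoding sidesteps this by working inside a closed set where decoding is unambiguous and standard, and the geometric gap estimate is precisely what converts the shadow relation $\approx$ into exact agreement on all standard indices.
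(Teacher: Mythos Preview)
Your proof is correct and follows the same two-part strategy as the paper, with only cosmetic differences in each direction. For $\SP \Rightarrow \SP'$ both arguments encode subsets of $\bbN$ as reals in a base large enough to sidestep the non-uniqueness of binary expansions: the paper uses base $10$ with digits in $\{0,1\}$ and reads off the digits of the shadow's decimal expansion, while you use base $3$ with digits in $\{0,2\}$ and instead argue topologically that the shadow must land in the (closed, standard) Cantor set---your version avoids the paper's small case analysis on whether $\chi_A$ and the digit sequence of $r$ ever differ. For $\SP' \Rightarrow \SP$ both take the Dedekind-cut route; the paper applies $\SP'$ directly over $\bbQ$ and uses Transfer once to show the resulting standard $B\subseteq\bbQ$ is bounded above by $n_0$ before taking $\sup B$, whereas you pull back through a bijection $q:\bbN\to\bbQ$ and impose the cutoff $q_n < N+1$. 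Your parenthetical ``small enough that $r+\varepsilon < N+1$'' tacitly uses $r < N+1$; this is true (every standard element of your set is $q_n$ with $n$ standard in $B$, hence in $A$, hence $q_n < x \le N$, and Transfer then bounds the whole standard set by $N$), but it is exactly the step the paper makes explicit and you might as well too.
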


\begin{proof}[Proof of Lemma~\ref{spandsp'}]
$\SP' \Rightarrow \SP$:
Assume $x \in \bbR $ is limited by a standard  $n_0 \in \bbN$. 
Let $A = \{ q \in \bbQ \,\mid\, q \le x\}$.
Applying $\SP'$ with  $\bbN$ replaced by $ \bbQ$, we obtain a standard set  $B \subseteq \bbQ$ such that 
$\forall^{\st} q \in \bbQ\;(q \in B \eqi q\in A)$.
As $\forall^{\st} q \in B \, (q \le n_0)$ holds, the set $B$ is bounded above (apply Transfer to the formula 
$q \in B \imp q \le n_0$) and so it has a supremum $r \in \bbR$, which is standard (Transfer again).
We claim that $x \approx r$.
If not, then $| x - r | > \frac{1}{n}$ for some standard $n$, hence either $ x < r - \frac{1}{n}$ or $x > r +\frac{1}{n}$. 
In the first case $\sup B \le r - \frac{1}{n}$ and in the second, $\sup B \ge r +\frac{1}{n}$; either way contradicts $\sup B = r$.

$\SP \Rightarrow \SP'$:
The obvious idea is to 
represent the characteristic function of a set $A \subseteq \bbN$
by the binary expansion of a real number  in $[0, 1]$.
But some real numbers have two binary expansions and therefore correspond to two distinct subsets of $\bbN$. This is a source of technical complications that we avoid by using decimal expansions instead.

Given $A \subseteq \bbN$, let $\chi_A$ be the characteristic function of $A$.
Define a real number $x_A = \Sigma_{n = 0}^{\infty} \frac{\chi(n)}{10^{n+1}}$;
as $0 \le x_A \le \frac{1}{9}$, there is a standard real number $r \approx x_A$.
Let $r = \Sigma_{n = 0}^{\infty} \frac{a_n}{10^{n+1}}$ be the decimal expansion of $r$
where for every $n$ there is $k > n$ such that $a_k \neq 9$. Note that if $n$ is standard, then there is a standard $k$ with this property, by Transfer.
If $\chi(n) = a_n$ for all $n$, then $A$ is standard and we let $B = A$. Otherwise 
let $n_0$ be the least $n$ where $\chi(n) \neq a_n$.
From $r \approx x_A$ it follows easily that $n_0$ is nonstandard.
In particular, $a_n \in \{0,1\}$ holds for all standard $n$, hence, by Transfer, for all $n \in \bbN$.
Let $B = \{n \in \bbN \,\mid\, a_n = 1\}$. Then $B$ is standard and for all standard $n \in \bbN$,
$n \in B $ iff $ a_n = 1$ iff $ \chi(n) = 1 $ iff $ n \in A$.
\end{proof}
As explained in the Introduction, Standardization over uncountable sets such as $\bbR$, even for very simple formulas, implies the existence of nonprincipal  ultrafilters over $\bbN$, and so it cannot be proved in $\SPOT$ (consider a standard set $U$ such that $\forall^{\st} X \,(X \in U \eqi X \subseteq \bbN \,\wedge\, \nu \in X)$, where $\nu$ is a nonstandard integer).
But we need to be able to prove the existence of various subsets of $\bbR$ and functions from $\bbR$ to $\bbR$ that arise in the Calculus and may be defined in terms of infinitesimals. 
Unlike the undesirable example above, such uses generally involve Standardization for formulas with standard parameters.

An $\st$-$\in$-formula  $\Phi(v_1, \ldots,v_n)$ is $\deltast$ if it is of the form 
$$Q_1^{\st} \,x_1\ldots  Q_m^{\st} \,x_m\, \psi(x_1,\ldots ,x_m,v_1, \ldots,v_n)$$
 where $\psi $ is an $\in$-formula and $Q$ stands for $\exists$ or $\forall$. 

\begin{lemma}\label{stinternal}
Let $\Phi (v_1,\ldots,v_n)$ be a $\deltast$  formula
with standard parameters. 
Then $\SPOT $ proves: $\quad$
$ \forall^{\st} S\;\exists^{\st} P\; \forall^{\st} v_1,\ldots,v_n\;$
$$ \bigl(\la v_1,\ldots,v_n \ra \in P \eqi
\la v_1,\ldots,v_n \ra \in S \; \wedge \; \Phi (v_1,\ldots,v_n)\bigr) .$$

\end{lemma}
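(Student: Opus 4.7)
The plan is to show that, when restricted to standard arguments, any $\deltast$ formula $\Phi$ with standard parameters is equivalent to the pure $\in$-formula $\Psi$ obtained by erasing every $\st$-superscript from its leading quantifiers. Once this equivalence is in hand, the required standard set $P$ is carved out of $S$ by Separation and shown to be standard via Transfer applied to its unique definition.

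Write $\Phi(v_1,\ldots,v_n) \equiv Q_1^{\st} x_1 \cdots Q_m^{\st} x_m\, \psi(x_1,\ldots,x_m,v_1,\ldots,v_n)$, and let
\[
\Psi(v_1,\ldots,v_n) \equiv Q_1 x_1 \cdots Q_m x_m\, \psi(x_1,\ldots,x_m,v_1,\ldots,v_n).
\]
By meta-induction on $m$ I would prove that $\SPOT$ proves $\forall^{\st} v_1 \cdots \forall^{\st} v_n\,(\Phi(v_1,\ldots,v_n) \eqi \Psi(v_1,\ldots,v_n))$. The case $m=0$ is trivial. For the inductive step, let $\Phi'$ and $\Psi'$ denote $\Phi$ and $\Psi$ stripped of their outermost quantifier $Q_1^{\st} x_1$, respectively $Q_1 x_1$. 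Applying the inductive hypothesis to $\Phi'$ (with $x_1$ adjoined to the list of free variables) yields that for all standard $x_1,v_1,\ldots,v_n$, $\Phi'(x_1,v_1,\ldots,v_n) \eqi \Psi'(x_1,v_1,\ldots,v_n)$. Hence $Q_1^{\st} x_1\,\Phi' \eqi Q_1^{\st} x_1\,\Psi'$ whenever $v_1,\ldots,v_n$ are standard. But $\Psi'$ is a pure $\in$-formula whose only parameters, once $x_1$ varies, are $v_1,\ldots,v_n$ together with the standard parameters of $\psi$; all are standard, so $\T$ gives $Q_1^{\st} x_1\,\Psi' \eqi Q_1 x_1\,\Psi' \equiv \Psi$.

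For the second step, fix a standard $S$ and use Separation in $\ZF$ to form
\[
P = \bigl\{ w \in S : \exists v_1 \cdots \exists v_n\, \bigl(w = \la v_1,\ldots,v_n\ra \,\wedge\, \Psi(v_1,\ldots,v_n)\bigr) \bigr\}.
\]
The condition defining $P$ from $S$ is an $\in$-formula whose parameters ($S$ and the parameters of $\psi$) are all standard, and $\ZF$ proves that this $P$ exists and is uniquely determined by $S$. Transfer applied to this unique-existence statement yields a standard set satisfying the definition, which by uniqueness must be $P$ itself; thus $P$ is standard. Finally, for standard $v_1,\ldots,v_n$, $\la v_1,\ldots,v_n\ra \in P$ iff $\la v_1,\ldots,v_n\ra \in S$ and $\Psi(v_1,\ldots,v_n)$, which by the first step is iff $\la v_1,\ldots,v_n\ra \in S$ and $\Phi(v_1,\ldots,v_n)$.

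The main obstacle is the careful bookkeeping in the meta-induction: at each stage $\T$ may be invoked only on an $\in$-formula all of whose parameters are standard, which is precisely why the $\st$-quantifiers are peeled off from the outside one at a time and why the hypothesis that $\Phi$ have standard parameters is indispensable. This also explains why the conclusion cannot be strengthened by allowing nonstandard parameters in $\Phi$ (or arbitrary, not merely standard, $v_i$): such a strengthening would re-introduce exactly the full Standardization over uncountable sets that, as noted before the lemma, yields nonprincipal ultrafilters over $\bbN$.
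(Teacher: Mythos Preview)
Your proof is correct and follows essentially the same approach as the paper's. The paper compresses your meta-induction into the single phrase ``By Transfer, $\Phi(v_1,\ldots,v_n)\eqi\phi(v_1,\ldots,v_n)$ for all standard $v_1,\ldots,v_n$'' and leaves the standardness of $P$ implicit, whereas you spell out both points carefully; but the underlying argument---replace the $\st$-quantifiers by ordinary ones via Transfer, then apply Separation---is identical.
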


\begin{proof}
Let $\Phi (v_1\ldots, v_n)$ be $Q_1^{\st} \,x_1\ldots  Q_m^{\st} \,x_m\, \psi(x_1,\ldots ,x_m,v_1, \ldots,v_n)$
and  $\phi (v_1\ldots, v_n)$ be
 $Q_1 \, x_1\ldots  Q_m \,x_m\, \psi(x_1,\ldots ,x_m,v_1, \ldots,v_n)$.
By Transfer, 
$ \Phi (v_1\ldots,v_n) \eqi \phi (v_1\ldots,v_n)$ for all standard $ v_1\ldots,v_n$.
The  set $P = \{ \la v_1,\ldots,v_n \ra  \in S \, \mid \, \phi( v_1,\ldots,v_n)\}$ exists by the Separation Principle of $\ZF$, and has the required property.
\end{proof}

 This result has twofold importance:
\begin{itemize}
\item
The meaning of every predicate that for standard inputs is defined by a   $Q_1^{\st} \,x_1\ldots  Q_m^{\st} \,x_m\, \psi$  formula with standard parameters is automatically extended to all inputs, where it it given by the  $\in$-formula $Q_1 \, x_1\ldots  Q_m \,x_m\, \psi$.
\item
Standardization holds for all $\in$-formulas  with additional predicate symbols, as long as all these  additional predicates are defined by $\deltast$  formulas with standard parameters.
\end{itemize}

In $\BST$ all $\st$-$\in$-formulas are equivalent to
$\deltast$ formulas (see Kanovei and Reeken~\cite{KR},
Theorem 3.2.3). In $\SPOT$ the equivalence is true only for certain
classes of formulas, but they include  definitions of all the basic
concepts of the Calculus and much beyond.

We recall that $h \in \bbR$ is \emph{infinitesimal} iff $0 <|h| < \frac{1}{n}$ holds for all standard $n \in \bbN$, $n >  0$.
We use $\forall^{\insl}$ and $\exists^{\insl}$ for quantifiers ranging over infinitesimals and $0$.
The basic concepts of the Calculus have infinitesimal definitions that involve a single alternation of such quantifiers.
The following proposition strengthens a result in Vop\v{e}nka~\cite{V}, p.\;148. 
It shows that the usual infinitesimal definitions of Calculus concepts are $\deltast$.
The variables $x, y $ range over $\bbR$ and $m,n, \ell$ range over $\bbN \setminus \{0\}$. 

\begin{proposition}\label{vopenka}  In $\SPOT$ the following is true:
Let $\phi(x,y)$ be an $\in$-formula with arbitrary parameters.  Then \quad
$\forall^{\insl} h \;  \exists^{\insl} k\;  \phi(h, k) \eqi$
\[
\forall^{\st} m\;\exists^{\st} n\;\forall x\; [\, |x| < 1/n \imp \exists y\; ( |y| < 1/m \;\wedge \;\phi(x,y))\,].
\]
\end{proposition}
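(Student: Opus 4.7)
The main tool will be Countable Idealization (Lemma~\ref{countideal}); both directions reduce to arranging a family of conditions so that idealization is applicable and then translating between "$|x|<1/n$ for all standard $n>0$" and "$x$ is infinitesimal or $0$".

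For the direction $\Rightarrow$, I would argue by contradiction. Assume the right-hand side fails, so fix a standard $m$ such that for every standard $n$ there is some $x$ with $|x|<1/n$ and $\forall y\,(|y|<1/m \imp \neg\phi(x,y))$. Define
\[
\psi(n,x)\;:\equiv\; |x|<1/n \;\wedge\; \forall y\,(|y|<1/m \imp \neg\phi(x,y)).
\]
The key observation is that $\psi$ is monotone in $n$: if $\psi(n,x)$ holds and $k\le n$ then $1/k\ge 1/n$, so $\psi(k,x)$ holds as well. Hence the hypothesis $\forall^{\st}n\,\exists x\,\psi(n,x)$ is equivalent to $\forall^{\st}n\,\exists x\,\forall k\le n\,\psi(k,x)$, and Countable Idealization (with $m,\phi$ regarded as parameters, which is allowed by Lemma~\ref{countideal}) yields a single $x$ with $\forall^{\st}n\,\psi(n,x)$. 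That $x$ then satisfies $|x|<1/n$ for all standard $n>0$, i.e.\ $x$ is infinitesimal or $0$. Applying the left-hand side to $h=x$ produces $k$ infinitesimal or $0$ with $\phi(x,k)$; but $k$ being infinitesimal or $0$ forces $|k|<1/m$ (as $m$ is standard), contradicting the second conjunct of $\psi$.

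For the direction $\Leftarrow$, fix any $h$ that is infinitesimal or $0$. For every standard $m$ the right-hand side gives a standard $n$ with the inner implication; since $|h|<1/n$ for every standard $n>0$, we obtain some $y$ with $|y|<1/m$ and $\phi(h,y)$. Thus $\forall^{\st}m\,\exists y\,\psi'(m,y)$ where
\[
\psi'(m,y)\;:\equiv\; |y|<1/m \;\wedge\; \phi(h,y).
\]
Again $\psi'$ is monotone in $m$ (for exactly the same reason), so by Countable Idealization there exists a single $y$ with $\forall^{\st}m\,\psi'(m,y)$. This $y$ satisfies $|y|<1/m$ for every standard $m>0$, so $y$ is infinitesimal or $0$, and $\phi(h,y)$ holds. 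Setting $k=y$ finishes the verification of the left-hand side.

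The only step that requires any thought is recognizing that both $\psi$ and $\psi'$ are monotone in the bound variable, which is what brings the hypotheses into the precise syntactic form demanded by Lemma~\ref{countideal}; everything else is a direct translation between the quantifier $\forall^{\insl}$/$\exists^{\insl}$ and the clause "$|x|<1/n$ for all standard $n>0$". No Standardization is used, only $\T$, $\N$, and Countable Idealization, all of which are available in $\SPOT$.
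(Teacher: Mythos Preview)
Your proof is correct. Both your argument and the paper's rest on the same two ingredients: Countable Idealization (applied once for each block of quantifiers) and the monotonicity of the clause $|x|<1/n$ in $n$, which is exactly what lets the idealization hypothesis $\forall^{\st} n\,\exists x\,\forall m\le n\,\psi(m,x)$ be verified.

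The only difference is organizational. The paper proceeds by a single chain of syntactic equivalences: it unfolds the meaning of $\forall^{\insl}$ and $\exists^{\insl}$, pushes the standard quantifiers outward using Countable Idealization and its dual, and simplifies $\forall\ell\le m\,(|y|<1/\ell)$ to $|y|<1/m$ along the way. You instead split the biconditional into two implications and argue one of them by contradiction, working with explicit witnesses rather than formula rewriting. Your version is perhaps more readable for someone thinking semantically, while the paper's makes the $\deltast$ structure of the resulting formula more transparent (which is the point of the proposition in context). Neither approach buys anything the other lacks mathematically.
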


By duality, we also have: \quad
$\exists^{\insl} h\; \forall^{\insl} k \;\phi(h, k) \eqi$ $$  \exists^{\st} m \;
\forall^{\st} n\;\exists x\; [\,  |x| < 1/n \;\wedge \;\forall y\; (  |y| <1/m\imp \phi(x, y))\,].$$ 

\begin{proof}
The formula $\forall^{\insl} h \;  \exists^{\insl} k\; \phi(h, k) $ means:
$$  \forall x\; [\, \forall^{\st} n\;  ( |x| < 1/n) \imp \exists y\;\forall^{\st} m\; ( |y| < 1/m \;\wedge \;\phi(x,y))\,],$$ 
where we assume that the variables $m, n$ do not occur freely in $\phi(x,y)$. 
Using Countable Idealization (Lemma~\ref{countideal}), we rewrite this as 
\[
\forall x\; [\, \forall^{\st} n\;  ( |x| < 1/n) \imp \forall^{\st} m\; \exists y\; \forall \ell \le m \,( |y| < 1/\ell \;\wedge \;\phi(x,y))\,].
\]
We now use the observation that $\forall \ell \le m \,( |y| < 1/\ell)$
is equivalent to $ |y| < 1/m$, and the rules $(\alpha \imp
\forall^{\st} v\, \beta) \eqi \forall^{\st} v\, (\alpha \imp \beta) $
and $(\forall^{\st} v\, \beta \imp \alpha )\eqi \exists^{\st} v\,
(\beta \imp \alpha) $, valid assuming that $v$ is not free in $\alpha$
(note that Transfer implies $\exists n\, \st(n)$).  This enables us to
rewrite the preceding formula as follows:
\[
\forall x\; \forall^{\st} m\; \exists^{\st} n\; [\, |x| < 1/n \imp
\exists y\; ( |y| < 1/m \;\wedge \;\phi(x,y))\,].
\]
After exchanging the order of the first two universal quantifiers, we
obtain the formula
$$ \forall^{\st} m\;  \forall x\;  \exists^{\st} n\;  [\, |x| < 1/n \imp \exists y\;  |y| < 1/m \;\wedge \;\phi(x,y))\,],$$ 
to which we apply (the dual form of) Countable Idealization to get 
\[
\forall^{\st} m\; \exists^{\st} n\; \forall x\; \exists \ell \le n\,
[\, |x| < 1/\ell \imp \exists y\; ( |y| < 1/m \;\wedge
\;\phi(x,y))\,].
\]
After rewriting $ \exists \ell \le n\, [\, |x| < 1/\ell \imp \ldots]$
as [\,$ \forall \ell \le n\, (\, |x| < 1/\ell) \imp \ldots]$ and
replacing $\forall \ell \le n\, ( |x| < 1/\ell) $ by $ |x| < 1/n$, we
obtain
$$ \forall^{\st} m\; \exists^{\st} n\;  \forall x\;   [\,  |x| < 1/n \imp \exists y\; (  |y| < 1/m \,\wedge\,\phi(x,y))\,],$$
proving the proposition.
\end{proof}

\subsection{Mathematics in $\SPOT$}

We give some examples to illustrate how infinitesimal analysis works
in $\SPOT$.

\begin{example}
If $F$ is a standard real-valued function on an open interval $(a,b)$
in $\bbR$ and $a, b, c, d$ are standard real numbers with $c \in (a,
b)$, we can define
\begin{equation}
 F'(c) = d \eqi  \forall^{\insl} h\; \exists^{\insl} k \;
\left( \;h \neq 0 \imp  \frac{F( c + h) - F(c)}{h} =  d + k\right).  
\end{equation}
Let $\Phi (F, c, d)$ be the formula on the right side of the
equivalence in~$(1)$.  Lemma~\ref{stinternal} establishes that the formula $\Phi$ is equivalent to a $\deltast$ formula, and $\phi(F, c, d)$ provided by the proof of Lemma~\ref{stinternal}
is easily seen to be equivalent to the standard $\varepsilon$-$\delta
$ definition of derivative.  For any standard $F$, the set $F' = \{
\la c, d \ra \mid \phi(F, c, d) \}$ is standard; it is the derivative
function of $F$.
\end{example}

Proposition~\ref{vopenka} generalizes straightforwardly to all  formulas  that have the form 
$\mathsf{A} h_1 \ldots \mathsf{A} h_n \;  \mathsf{E} k_1 \ldots \mathsf{E} k_m\; \phi(h_1,\ldots, k_1,\ldots \, ,v_1,\ldots)$
 or \\
$\mathsf{E}h_1 \ldots \mathsf{E} h_n \;  \mathsf{A} k_1 \ldots \mathsf{A} k_m\;\phi(h_1,\ldots, k_1,\ldots \, ,v_1,\ldots)$
 where each $\mathsf{A}$ is either $\forall$ or $\forall^{\insl}$, and each $\mathsf{E}$ is either $\exists$ or $\exists^{\insl}$. All such formulas are equivalent to  $\deltast$ formulas.

Formulas of the form $\mathsf{Q}_1 h_1\, \ldots \,\mathsf{Q}_n h_n \,\phi(h_1,\ldots,h_n, v_1,\ldots,v_k)$
where each $\mathsf{Q}$ is either $\forall$ or $\forall^{\insl}$ or $\exists$ or $\exists^{\insl}$, but all quantifiers over infinitesimals are of the same kind (all existential or all universal), are also  $\deltast$. As an example, 
$\exists^{\insl} h \,\forall y\, \exists^{\insl} k\, \phi(h,k,x,y)$  is equivalent to 
$$\exists h \,\forall y\, \exists  k\, \forall^{\st}m\,\forall^{\st} n\, ( |h| < 1/m \,\wedge\, |k| < 1/n \,\wedge\, \phi(h,k,x,y)).$$
The two quantifiers over standard elements of $\bbN$ can be replaced by a single one:
$$\exists h \,\forall y\, \exists  k\, \forall^{\st}m\, ( |h| < 1/m \,\wedge\, |k| < 1/m \,\wedge\, \phi(h,k,x,y)),$$
and then  moved to the front using Countable Idealization.

Klein and Fraenkel proposed two benchmarks for a useful theory of infinitesimals (see Kanovei et al.~\cite{KKKM}):
\begin{itemize}
\item
a proof of  the Mean Value Theorem by infinitesimal techniques;
\item
a definition of the definite integral in terms of infinitesimals.
\end{itemize}
The theory $\SPOT$ easily meets these criteria. The usual nonstandard proof of the Mean Value Theorem
(Robinson~\cite{R}, Keisler~\cite{K1, K2}) uses Standard Part and Transfer, and is easily carried out in $\SPOT$. 
The familiar infinitesimal definition of the Riemann integral for standard bounded functions on a standard interval $[a, b]$ also makes sense in $\SPOT$ and can be expressed by a  $\deltast$ formula. In the next example we outline a treatment inspired by Keisler's use of hyperfinite Riemann sums in~\cite{K2}.

\begin{example}\textbf{Riemann Integral.} \label{riemann}\\
We fix a positive infinitesimal $h$ and the corresponding ``hyperfinite time line'' 
$\bbT = \{ t_i \,\mid\, i \in \bbZ\}$ where $t_i = i\cdot h$. Let $f$ be a standard real-valued function continuous on the standard interval $[a, b ]$. Let $i_a, i_b$ be such that $ i_a \cdot h- h < a \le  i_a \cdot h$ and 
$i_b \cdot h < b \le i_b\cdot h + h$. Then
\begin{equation}
\int_a^b f(t)\, dt = \sh \left( \Sigma_{i = i_a}^{ i_{b} } f(t_i)\cdot h  \right).
\end{equation} 
It is easy to show that the value of the integral does not depend on the choice of $h$. 
We thus have, for standard $f, a,b, r: \quad \int_a^b f(t)\, dt  = r $ \; iff 
$$
\forall^{\insl} h \,\exists^{\insl} k \,\left(\Sigma_{i = i_a}^{  i_{b}  } f(t_i)\cdot h  = r + k\right) \text{ iff }
\exists^{\insl} h \,\exists^{\insl} k \, \left(\Sigma_{i = i_a}^{ i_{b}  } f(t_i)\cdot h  = r + k\right).
$$
The formulas are of the form $\mathsf{A}\,\mathsf{E}$ and $\mathsf{E}\,\mathsf{E}$ respectively, and therefore 
equivalent to  $\deltast$ formulas.

The approach generalizes easily to the Riemann integral of bounded functions on $[a, b]$.
We say that $\cT_h = \la t'_i\ra_{i = i_a}^{i_{b} } $
is an $h$- \emph{tagging on} $[a, b]$ if $i\cdot h \le t'_i \le (i+1)\cdot h$ for all $i = i_a,\ldots, i_b -1$
and $i_b\cdot h \le t'_{i_b} \le b$.
Then for standard $f,a,b,r$
\begin{itemize}
\item
$f$ is Riemann integrable on $[a, b]$ and  
$  \int_a^b f(x)\, dx  = r $ iff
\item
$\forall^{\insl} h\, \forall \cT_h \,\exists^{\insl} k \,\left(\Sigma_{i = i_a}^{  i_{b} } f(t'_i)\cdot h  = r + k\right)$ iff
\item
$\exists^{\insl} h\, \forall \cT_h \,\exists^{\insl} k \,\left(\Sigma_{i = i_a}^{  i_{b} } f(t'_i)\cdot h  = r + k\right).$
\end{itemize}
These formulas are 
again equivalent to  $\deltast$ formulas (the first one is of the form 
$\mathsf{A}\,\mathsf{A}\,\mathsf{E}$ and in the second one both quantifiers over standard sets are existential).
\end{example}

The tools available in $\SPOT$ enable nonstandard definitions and proofs in parts of mathematics that go well beyond the Calculus.
\begin{example}\textbf{Fr\'{e}chet Derivative.}
Given standard normed vector spaces  $V$ and $W$,   a standard open subset $U$ of $V$,  
a standard function $f : U \to W$,  a standard bounded linear operator $A : V \to W$ and a standard $x \in U$;
$A$ is the \emph{Fr\'{e}chet derivative of $f$ at }$x \in U$ iff
$$ \forall z \, \forall^{\insl} h \,\exists^{\insl} k \, \left( \parallel z \parallel_V = h > 0 \imp
\frac{\parallel f(x + z) - f(x) - A \cdot z \parallel_W}{ \parallel z \parallel_V} = k\right).
$$
This definition is equivalent to a $\deltast$ formula.
\end{example}

In Section~\ref{forcingexternal} we show that Standardization for arbitrary formulas with standard parameters can be added to $\SPOT$ and the resulting theory is still conservative over $\ZF$.
This result enables one to dispose of any concerns about the form of the defining formula.


\section{Theory $\SCOT$ and Lebesgue measure}
\label{scot}

We recall (see Section~\ref{s13}) that $\SCOT$ is $\SPOT + \ADC + \SN +\CC$,
where the principle $\CC$ of Countable $\st$-$\in$-Choice postulates
the following.

\medskip
$\CC\;$ Let $\phi (u,v)$ be an $\st$-$\in$-formula with arbitrary
parameters. Then $\forall^{\st} n \in \bbN\; \exists x\; \phi(n,x)
\imp \exists f\, (f \text{ is a function} \,\wedge\, \forall^{\st} n
\in \bbN\, \phi(n, f(n)).$\\

The set $\bbN$ can be replaced by any standard countable set $A$.  We
consider also the principle $\SC$ of Countable Standardization.

\medskip
$\SC$ (Countable Standardization) Let $\psi(v)$ be an
$\st$-$\in$-formula with arbitrary parameters. Then
\[
\exists^{\st} S\; \forall^{\st} n \; (n \in S \eqi n \in \bbN
\,\wedge\, \psi(n)).
\]

\begin{lemma}
\label{s31}
The theory $\SPOT+\CC$ proves $\SC$.
\end{lemma}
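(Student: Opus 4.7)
The plan is to reduce $\SC$ to the Standard Part principle $\SP'$ already available in $\SPOT$, using $\CC$ to convert the external $\st$-$\in$-formula $\psi(n)$ into a concrete $\{0,1\}$-valued function whose value at each standard $n$ encodes whether $\psi(n)$ holds. Once such a function is in hand, the subset of $\bbN$ it sends to $1$ is an internal set, and $\SP'$ then produces the desired standard $B$.

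Concretely, I will introduce the auxiliary $\st$-$\in$-formula
\[
\phi(n,x) \;\equiv\; (x = 1 \wedge \psi(n)) \vee (x = 0 \wedge \neg \psi(n)),
\]
which is a legitimate input to $\CC$ since it is built from $\psi$ by equality tests and propositional connectives. By classical logic, every standard $n \in \bbN$ satisfies $\exists x\;\phi(n,x)$ (take $x = 1$ if $\psi(n)$, else $x = 0$), so the hypothesis of $\CC$ is met. Applying $\CC$ yields a (possibly nonstandard) function $f$ with $\forall^{\st} n \in \bbN\; \phi(n,f(n))$; in particular $f(n) \in \{0,1\}$ and $f(n) = 1 \eqi \psi(n)$ for every standard $n$. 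Next, by the Separation Principle of $\ZF$ (with $f$ as a parameter), I form the internal set $A = \{n \in \bbN \mid f(n) = 1\}$, and apply $\SP'$ to $A$ to obtain a standard $B \subseteq \bbN$ such that $\forall^{\st} n \in \bbN\;(n \in B \eqi n \in A)$. Chaining equivalences yields $n \in B \eqi \psi(n)$ on standard $n$, which is exactly $\SC$.

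No serious obstacle arises; the proof is essentially just the composition of $\CC$ and $\SP'$. The one conceptual point worth flagging is that $f$ need not itself be standard, so we must rely on ordinary Separation (not on any transfer-based definability) to build $A$, and on $\SP'$ (rather than on a standard defining formula for $A$) to pass from the internal set $A$ to the standard set $B$. This is the same internal-to-standard pattern already used in the proof of Lemma~\ref{spandsp'}.
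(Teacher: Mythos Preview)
Your proof is correct and is essentially identical to the paper's own argument: the paper defines the same auxiliary formula (with the roles of $0$ and $1$ swapped), applies $\CC$ to obtain $f$, forms $A = \{n \in \bbN \mid f(n) = 0\}$, and then invokes $\SP'$ to extract the standard set. The extra commentary you give about why ordinary Separation suffices and why $\SP'$ is needed is accurate and helpful.
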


\begin{proof}
 Let $\phi(n,x)$ be the formula $`` (\psi(n) \,\wedge\, x = 0 )\, \vee \,  (\neg \psi(n) \,\wedge\,x = 1 )$''.
If $f$ is a function provided by $\CC$, let $A = \{ n \in \bbN \,\mid\, f(n) = 0\}$. By $\SP$ there is a standard set $S$  such that, for all standard $n \in \bbN$, $n \in S$ iff $ n \in A$ iff $ \psi (n) $ holds.
\end{proof}

We introduce an additional principle $\CC^{\st}$.

\medskip
$\CC^{\st}$\; Let $\phi (u,v)$ be an $\st$-$\in$-formula with
arbitrary parameters.  Then
 $$ \forall^{\st} n \in \bbN\; \exists^{\st} x \,  \phi(n,x) \imp \exists^{\st}   F\, (F \text{ is a function}
\,\wedge\, \forall^{\st} n \in \bbN\, \phi(n, F(n)).$$

The principle $\CC_{\bbR}^{\st}$ is obtained from $\CC^{\st}$ by
restricting the range of the variable $x$ to $\bbR$.
\begin{lemma}
The theory $\SPOT + \CC$ proves  $\CC_{\bbR}^{\st}$.
\end{lemma}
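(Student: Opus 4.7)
The goal is to upgrade the pointwise existence of a standard real satisfying $\phi(n,\cdot)$ for each standard $n$ to a genuinely standard function $F \colon \bbN \to \bbR$ realizing this choice. My plan is in two steps: first extract an internal (possibly non-standard) witness function via $\CC$, and then standardize it coordinate-wise through its Dedekind cuts in $\bbQ$, using the standardization of subsets of $\bbN$ provided by $\SP'$.

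For the first step, apply $\CC$ to the $\st$-$\in$-formula $\psi_0(n,x) \equiv \phi(n,x) \,\wedge\, \st(x) \,\wedge\, x \in \bbR$. The hypothesis $\forall^{\st} n \in \bbN\; \exists^{\st} x \in \bbR\; \phi(n,x)$ gives $\forall^{\st} n \in \bbN\; \exists x\; \psi_0(n,x)$, and $\CC$ yields a function $f$ such that, for every standard $n \in \bbN$, $f(n)$ is a standard real with $\phi(n,f(n))$; typically $f$ itself is not standard.

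For the second step, fix a standard bijection $\pi \colon \bbN \to \bbN \times \bbQ$, and by Separation form
$$A = \{k \in \bbN : \exists n \in \bbN\; \exists q \in \bbQ\, (\pi(k) = \la n, q\ra \,\wedge\, q < f(n))\}.$$
By $\SP'$ there is a standard $S \subseteq \bbN$ with $k \in S \eqi k \in A$ for every standard $k$. For each $n \in \bbN$ put $T_n = \{q \in \bbQ : \pi^{-1}(\la n, q\ra) \in S\}$, and define $F \colon \bbN \to \bbR$ by $F(n) = \sup T_n$ whenever $T_n$ is non-empty and bounded above, and $F(n) = 0$ otherwise. Since $F$ is $\in$-definable from the standard parameters $\pi$ and $S$, it is standard.

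Finally, I would check that $F(n) = f(n)$ for every standard $n$. For such $n$, the set $T_n$ is standard; its standard elements are exactly the standard rationals below $f(n)$, since for standard $n$ and $q$ the code $\pi^{-1}(\la n, q\ra)$ is standard, and lies in $S$ iff it lies in $A$ iff $q < f(n)$. The Dedekind cut $D = \{q \in \bbQ : q < f(n)\}$ is also standard with the same standard elements, so Transfer forces $T_n = D$, whence $F(n) = \sup D = f(n)$ and $\phi(n, F(n))$ holds. The delicate point is precisely this Transfer step showing that two standard sets with identical standard elements must coincide; it is what forces the detour through countably many rational comparisons, since $\SPOT$ does not afford direct standardization over subsets of $\bbR$.
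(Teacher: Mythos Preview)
Your proof is correct and follows essentially the same approach as the paper: both apply $\CC$ to obtain an internal $f$ with standard values, encode the relevant data about $f$ as a subset of a countable standard set, standardize that subset, define a standard $F$ from it, and conclude $F(n)=f(n)$ for standard $n$ via the Transfer argument that two standard sets with the same standard elements coincide. The only cosmetic difference is the coding: you work explicitly with Dedekind cuts in $\bbQ$ and invoke $\SP'$ directly, whereas the paper encodes via set-membership $m\in f(n)$ over $\bbN\times\bbN$ and appeals to the derived principle $\SC$.
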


\begin{proof}
First use the principle $\CC$ to obtain a function $f : \bbN \to \bbR$
such that $\forall^{\st} n \in \bbN \,(f(n) \in \bbR \,\wedge\,
\st(f(n)) \,\wedge \,\phi(n, f(n)).$ Next define a relation $r
\subseteq \bbN \times \bbN$ by $\la n, m \ra\in r $ iff $ m \in f(n)$.
By Lemma~\ref{s31}, $\SC$ holds.  By $\SC$ there is a standard
$R\subseteq \bbN \times \bbN$ such that $\la n, m \ra\in R $ iff $\la
n, m \ra\in r$ holds for all standard $\la n, m \ra$.  Now define $F:
\bbN \to \bbR$ by $F(n) = \{ m \, \mid\, \la n, m \ra\in R\} $.  The
function $F$ is standard and, for every standard $n$, the sets $F(n) $
and $f(n)$ have the same standard elements. As they are both standard,
it follows by Transfer that $F(n) = f(n)$.
\end{proof}

The full principle $\CC^{\st}$  can conservatively be added to $\SCOT$; see Proposition~\ref{ccst}.

A useful consequence of $\SC$ is the ability to carry out external induction.

\begin{lemma}\label{extind} (External Induction)
Let $\phi(v)$ be an $\st$-$\in$-formula with arbitrary parameters. Then $\SPOT + \SC$ proves the following:
$$[\,\phi(0) \,\wedge\, \forall^{\st} n \in \bbN\, (\phi(n) \imp \phi(n+1))\imp \forall^{\st} n \,\phi(n)\,].  $$
\end{lemma}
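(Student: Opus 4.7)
My plan is to reduce external induction to ordinary internal induction in $\ZF$, using $\SC$ to convert the $\st$-$\in$-formula $\phi$ into a standard subset of $\bbN$ that agrees with $\phi$ on standard integers.

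First I would apply $\SC$ to $\phi(v)$ itself: this produces a standard set $S \subseteq \bbN$ such that for every standard $n \in \bbN$, we have $n \in S \eqi \phi(n)$. The goal $\forall^{\st} n\,\phi(n)$ thus becomes $\forall^{\st} n\,(n \in S)$, which by Transfer (since $S$ is standard) is equivalent to $\forall n \in \bbN\,(n \in S)$, i.e. $S = \bbN$.

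Next I would verify that $S$ satisfies the hypotheses of ordinary induction in $\ZF$. From $\phi(0)$ and the fact that $0$ is standard we obtain $0 \in S$. From the hypothesis $\forall^{\st} n \in \bbN\,(\phi(n) \imp \phi(n+1))$, together with the fact that $n+1$ is standard whenever $n$ is, we get $\forall^{\st} n \in \bbN\,(n \in S \imp n+1 \in S)$. Now the internal $\in$-formula $\forall n \in \bbN\,(n \in S \imp n+1 \in S)$ has only the standard parameter $S$, so Transfer upgrades the $\forall^{\st}$ version to the $\forall$ version. Finally, ordinary induction in $\ZF$ applied to $S$ yields $S = \bbN$, and we are done by the reduction above.

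The only conceptual step worth flagging is the use of $\SC$ at the start to internalize the external predicate $\phi$; once that is in hand, the rest is a routine combination of Transfer and internal induction, and no real obstacle remains.
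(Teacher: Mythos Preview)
Your proposal is correct and matches the paper's proof essentially line for line: apply $\SC$ to obtain a standard $S\subseteq\bbN$ witnessing $\phi$ on standard integers, verify $0\in S$ and the successor step, use Transfer on the $\in$-formula $n\in S\imp n+1\in S$ (standard parameter $S$) to remove the $\st$ restriction, and conclude $S=\bbN$ by ordinary induction. The only elaborations you add (that $0$ and $n+1$ are standard, and that Transfer applies because the formula is internal with standard parameter) are implicit in the paper's version.
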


\begin{proof}
$\SC$ yields a standard set $S \subseteq \bbN$ such that $\forall^{\st} n \in \bbN\, (n \in S \eqi \phi(n)).$
We have $0 \in S$ and $\forall^{\st} n \in \bbN \,(n \in S \imp n+1 \in S)$. Then
$\forall n \in \bbN \,(n \in S \imp n+1 \in S)$ by Transfer, and  $S = \bbN$ by induction.
Hence $\forall^{\st} n \in \bbN \, \phi(n)$ holds.
\end{proof}

In Example~\ref{lebesgue} it is convenient to use the language of
external collections. Let $\phi (v)$ be an $\st$-$\in$-formula with arbitrary
parameters. We use dashed curly braces to denote the \emph{external
collection} $\pmbaa x \in A \,\mid\, \phi (x) \pmbbb $.  We emphasize that this
is merely a matter of convenience; writing $z \in \pmbaa x \in A
\,\mid\, \phi (x) \pmbbb $ is just another notation for $\phi (z)$.

Standardization in $\BST$ implies the existence of a standard set $S$ such that 
$\forall^{\st} z\; (z \in S \eqi z \in \pmbaa x \in A \,\mid\, \phi (x) \pmbbb )$.
We do not have Standardization over uncountable sets in $\SCOT$, but one important case can be proved.

\begin{lemma}\label{extinf}
Let $\phi (v)$ be an $\st$-$\in$-formula with arbitrary parameters.
Then $\SCOT$ proves that $\mathbf{inf}^{\,\st}\, \pmbaa r \in \bbR \,\mid\,
\phi(r) \pmbbb$ exists.
\end{lemma}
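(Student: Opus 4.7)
The plan is to reduce the existence of $\mathbf{inf}^{\,\st}$ to Countable Standardization $\SC$ applied to the standard countable set $\bbQ$. Consider the $\st$-$\in$-formula
\[
\psi(q)\;\equiv\; q \in \bbQ \,\wedge\, \forall^{\st} r \in \bbR\,(\phi(r) \imp q \le r),
\]
which expresses that the rational $q$ is a lower bound for the external collection $\pmbaa r \in \bbR \,\mid\, \phi(r)\pmbbb$ (regarded through its standard elements). By Lemma~\ref{s31}, $\SCOT$ proves $\SC$ over $\bbN$; the same proof (or a change of variable via a standard bijection $\bbN \to \bbQ$, whose existence is an internal statement with no parameters) yields the analogous scheme with $\bbQ$ in place of $\bbN$, as the paper already flags for $\CC$. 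Applying it to $\psi$ produces a standard set $Q \sus \bbQ$ such that for every standard $q \in \bbQ$, $q \in Q \eqi \psi(q)$.

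Set $s := \sup Q$, interpreted in the extended reals $[-\infty,+\infty]$; since $Q$ is standard, $s$ is standard by Transfer. I claim $s$ is the required $\mathbf{inf}^{\,\st}\pmbaa r \in \bbR \,\mid\, \phi(r)\pmbbb$. For the lower-bound property, fix any standard $r$ with $\phi(r)$: every standard $q \in Q$ satisfies $q \le r$ by construction, and Transfer promotes this to $\forall q \in Q\,(q \le r)$, so $\sup Q \le r$. For maximality among standard lower bounds, suppose $s'$ is a standard real with $s < s'$; by density of $\bbQ$ in $\bbR$ (an $\in$-statement with standard parameters $s, s'$) Transfer supplies a standard rational $q \in (s, s')$. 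Since $q > \sup Q$, we have $q \notin Q$, so $\neg \psi(q)$ furnishes a standard $r$ with $\phi(r)$ and $r < q < s'$, and $s'$ fails to be a standard lower bound.

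Degenerate cases are absorbed uniformly by passing through the extended reals: if no standard $r$ satisfies $\phi$, every rational lies in $Q$ and $s = +\infty$; if the collection has no standard lower bound, $Q = \emptyset$ and $s = -\infty$. The only real obstacle is identifying the right auxiliary formula $\psi$ and justifying the $\bbQ$-version of $\SC$; once that is in hand, the verification is an entirely routine density-and-Transfer argument, requiring no further use of $\ADC$, $\SN$, or $\CC$ beyond what already went into $\SC$.
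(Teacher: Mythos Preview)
Your proof is correct and follows essentially the same approach as the paper: both reduce to Countable Standardization over $\bbQ$ and take a supremum/infimum of the resulting standard set. The only cosmetic difference is that you standardize the rational \emph{lower bounds} and take $\sup$, whereas the paper standardizes the rationals lying \emph{above} some standard $r$ with $\phi(r)$ and takes $\inf$; these are dual formulations of the same idea.
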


The notation indicates the greatest standard $s \in \bbR$ such that $s \le r$ for all standard $r$ with the property $\phi(r)$ ($+\infty$ if there is no such $r$).

\begin{proof}
Consider $\mathbf{S} = \pmbaa q \in \bbQ \,\mid\,\exists^{\st} r \in
 \bbR\, ( q \ge r \,\wedge\,\phi(r) ) \pmbbb$.  As $\bbQ$ is countable, the principle
 $\SC$ implies that there is a standard set $S$ such that $\forall q
 \in \bbQ\, (q \in S \eqi q \in \mathbf{S})$.  Therefore $\inf S$
 exists ($+\infty$ if $S = \emptyset$) and it is what is meant above
 by $\textbf{inf}^{\;\st}\, \pmbaa r \in \bbR \,\mid\, \phi(r) \pmbbb$.
\end{proof}

We give two examples of mathematics in $\SCOT$.

\begin{example}\textbf{Peano Existence Theorem.}
Peano's Theorem asserts that every first-order differential equation of the form $y' = f(x,y)$ has a solution (not necessarily a unique one) satisfying the initial condition $y(0)=0$, under the assumption that $f$ is continuous in a neighborhood of $\la 0,0 \ra$. The infinitesimal proof begins by constructing the sequences
$$x_0=0, \; x_{k+1} = x_k + h \text{ where } h> 0 \text{ is infinitesimal};$$
$$ y_0 = 0, \; y_{k+1} = y_k + h\cdot f(x_k, y_k).$$
One then shows that there is $N\in \bbN$ such that $x_k, y_k$ are defined for all  $k \le N$, $a = \st(x_N) > 0$, and for some standard $M>0$, $| y_k | \le M \cdot a$ holds for all $k \le N$. 
The desired solution is a standard function $Y: [0, a] \to \bbR$ such that for all standard $x \in [0,a]$, 
if $x \approx x_k$, then $Y(x) \approx y_k$.  On the face of it one needs Standardization over $\bbR$ to obtain this function, but in fact $\SC$ suffices.
Consider the countable set $A = (\bbQ \times \bbQ) \cap ([0,a] \times [-M \cdot a, M \cdot a])$.
By $\SC$, there is a standard $Z \subseteq A$ such that for all standard $\la x,y \ra$, $\la x,y \ra \in Z$ iff 
$\exists k \le N\;(x \approx x_k \,\wedge\, (y \approx y_k \,\vee\, y \ge y_k)$.
Define a standard function $Y_0$ on $\bbQ \cap [0, a]$ by  $Y_0 (x) = \inf \{ y \,\mid\, \la x, y \ra \in Z\}$.
 It is easy to verify that $Y_0$ is continuous on $\bbQ \,\cap\, [0,a]$ and that its extension $Y$ to a continuous function on $[0,a]$ is the desired solution.\\
\end{example}

\begin{example}\textbf{Lebesgue measure.}\label{lebesgue}
In a seminal  paper~\cite{Loeb}  Loeb introduced measures on the external power set of ${}^{\ast}\bbR$ which became known as Loeb measures, and used them to construct the Lebesgue measure on $\bbR$. Substantial use of external collections is outside the scope of this paper (see Subsection~\ref{externalsets}), but it is possible to eliminate the intermediate step and give an  infinitesimal definition \`{a} la Loeb of the Lebesgue measure in internal set theory. We outline here how to construct the Lebesgue outer measure on $\bbR$ in $\SCOT$.

Let $\bbT$ be a hyperfinite time line (see Example~\ref{riemann}) and
let $E \subseteq \bbR$ be  standard. A finite set $A \subseteq \bbT$ \emph{covers}  $E$
if $$\forall t \in \bbT\, (\exists^{\st} x \in E\;( t \approx x)  \imp t \in A).$$
We define $\pmb{\mu}$ by setting
\begin{equation}  \pmb{\mu}(E) = \textbf{inf}^{\,\st}\, \pmbaa  r \in \bbR \,\mid\, r \approx |A| \cdot h \text{ for some  } A \text{ that covers } E  \pmbbb.
\end{equation}
The collection whose infimum needs to be taken is external, but the existence of the infimum is justified by Lemma~\ref{extinf}.
It is easy to see that the value of $\pmb{\mu}(E)$ is independent of the choice of the infinitesimal $h$ in the definition of $\bbT$.
Thus the  external function $\pmb{\mu}$  can be  defined for standard $E\subseteq \cP (\bbR)$ by an $\st$-$\in$-formula with no parameters (preface the formula on the right side of (3)  by $\forall^{\insl} h$ or $\exists^{\insl} h$). 
The principle $\SN$ 
(see Subsection~\ref{s13} and Section~\ref{forcingexternal})
yields a standard  function $m$ on $\cP (\bbR)$ such that $m(E) = \pmb{\mu}(E)$ for all standard $E\subseteq  \bbR$. We prove that $m$ is $\sigma$-subadditive.

Let $E = \bigcup_{n=0}^{\infty} E_n$ where $E$ and the sequence $\la E_n \,\mid\, n \in \bbN\ra$ are standard. 
If $\Sigma_{n=0}^{\infty} m(E_n) = +\infty$ the claim is trivial, so we assume that $m(E_n) = r_n \in \bbR$ for all $n$. Fix a standard  $\varepsilon > 0$. For every standard $n \in \bbN$ there exists $A$ such that  $\phi(n, A)$: $``A \text{ covers } E_n \,\wedge\, |A| \cdot h < r_n +\varepsilon/ 2^{n+1}$'' holds.
By Countable $\st$-$\in$-Choice there is a sequence $\la A_n \,\mid\, n \in \bbN \ra$ such that for all standard $n$ 
 $\phi(n, A_n)$ holds. By Countable Idealization (``Overspill'') there is a nonstandard  $\nu \in \bbN$ such that 
 $   |A_n| \cdot h < r_n +\varepsilon/ 2^{n+1} $ holds for all $n \le \nu$.
We  let $A = \bigcup_{n=0}^{\nu} A_n$.
Clearly $A$ is finite and  covers $E$. Thus for $r =  \sh(|A| \cdot h)$ we obtain
$m(E) \le r$ and 
$$|A| \cdot h \le \Sigma_{n=0}^{\nu} |A_n|\cdot h < \Sigma_{n=0}^{\nu} r_n + \varepsilon.$$
Since the sequence $\Sigma_{n=0}^{\infty} r_n$ converges, we have 
$\sh(\Sigma_{n=0}^{\nu} r_n) = \Sigma_{n=0}^{\infty} r_n $ and $m(A) \le \Sigma_{n=0}^{\infty} r_n +\varepsilon$. As this is true for all standard $\varepsilon > 0$, we conclude that 
$m(E) \le \Sigma_{n=0}^{\infty} r_n = \Sigma_{n=0}^{\infty} m(E_n)$.
\qed

\bigskip
For closed intervals $[a,b]$, $m([a,b]) = b-a$:
Compactness of $[a, b]$ implies that $\forall t \in \bbT\cap [a,b]\, \exists^{\st} x \in E\;( t \approx x) $.
Thus if $A$ covers $[a, b]$ then $A \supseteq \bbT\cap [a,b]$; and for $A = \bbT\cap [a,b]$ one sees easily that $|A| \cdot h \approx (b-a)$. 
With more work, one can show that $m(E)$ coincides with  the conventionally defined Lebesgue  outer measure of $E$ for all standard  $E \subseteq \bbR$.
See Hrbacek~\cite{H2} Section 3
 for more details and other equivalent nonstandard definitions of the Lebesgue outer  measure.%
\footnote{In~\cite{H2} Remark (3) on page 22 it is erroneously claimed that the statement $m_1(A) = r$ is equivalent to an internal formula. The existence of the function $m_1$ there follows from Standardization, just as in the case of $m$ above. }
One can define Lebesgue measurable sets from $m$ in the usual way. One can also define Lebesgue \emph{inner} measure for standard $E$  by
\begin{align*}\mu^{-}(E) = 
 &\textbf{sup}^{\st}\, \pmbaa   r \in \bbR \,\mid\, r \approx |A| \cdot h \text{ 
for some  } A \text{ such that} \\
&\forall t \in \bbT\, (t \in A \imp \exists^{\st} x \in E\;( t \approx x) ) \pmbbb
\end{align*}
and prove that a standard bounded $E \subseteq \bbR$ is Lebesgue measurable iff $m(E) = m^{-}(E)$, and the common value is the Lebesgue measure of $E$; see Hrbacek~\cite{H3}.
\end{example}


\section{Conservativity of $\SPOT$ over $\ZF$}

In this section we apply forcing techniques to prove conservativity of $\SPOT$ over $\ZF$.

\begin{theorem}\label{TheoremA}
The theory $\SPOT$ is a conservative extension of $\ZF$:\\
If $\theta$ is an $\in$-sentence, then $(\,\SPOT \vdash \theta\,)$ implies that $(\,\ZF \vdash \theta\,)$.
\end{theorem}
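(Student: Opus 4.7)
The plan is to prove the contrapositive model-theoretically. Given any model $\cM \models \ZF$ and any $\in$-sentence $\theta$ with $\cM \not\models \theta$, I would construct a model of $\SPOT$ refuting $\theta$. The intended model will be a structure $(\scM, \in, \st)$ in which $\scM$ is an elementary $\in$-extension of $\cM$ via a canonical map $j \colon \cM \to \scM$, the predicate $\st(x)$ picks out the image $j[\cM]$, and $\scM$ contains a nonstandard natural number $\nu$. Elementarity of $j$ will yield $\T$, and since $\theta$ is parameter-free, $\cM \not\models \theta$ will give $\scM \not\models \theta$, establishing $\SPOT \not\vdash \theta$.

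The construction of $\scM$ proceeds by forcing, following and extending the Enayat-Spector framework of extended ultrapowers. Working inside $\cM$, I would define a poset $\bbP \in \cM$ whose conditions are compatible fragments of an ultrafilter-like selector on $\cP(\bbN)^{\cM}$ together with partial evaluations of the ground-model universe at $\nu$; a generic filter $G$ then assembles these fragments into a full evaluation, and $\scM$ is defined as the resulting direct limit of partial ultrapowers. The poset should be designed so that only countably many dense sets need be met in order to force each target axiom, so that the generic can be produced in $\ZF$ alone (for instance after passing to a countable $\in$-substructure of $\cM$), with no appeal to $\AC$. The axioms of $\ZF$ in $\scM$ and of Transfer will follow from standard genericity arguments combined with a \L o\'s-style lemma adapted to the partial ultrapower setting, while $\N$ is enforced by a dense set of conditions placing $\nu$ above every $j(n)$ for $n \in \bbN^{\cM}$.

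The delicate axiom is $\SP'$: given $A \subseteq \bbN^{\scM}$ in the extension, I must produce a standard $B \subseteq \bbN^{\cM}$ whose $j$-image agrees with $A$ on $j[\bbN^{\cM}]$. The strategy is to show that for every such $A$, the standard trace $\{ n \in \bbN^{\cM} \mid j(n) \in A \}$ is already definable in $\cM$ from ground-model parameters, via a genericity argument that pins down, for each $n \in \bbN^{\cM}$, which conditions force $j(n) \in A$. This is the main obstacle: in a choice-rich setting one would simply quotient by a nonprincipal ultrafilter on $\bbN^{\cM}$, but no such ultrafilter need exist in $\cM$, and indeed must not exist in the final model for the conservativity statement to be nontrivial. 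The Enayat-Spector extended ultrapower is engineered precisely so that the generic plays the role of this missing ultrafilter while preserving the property that subsets of $\bbN^{\scM}$ in the extension have ground-model standard traces. Verifying this preservation rigorously for the specific poset $\bbP$ chosen is where I expect the bulk of the technical work to lie.
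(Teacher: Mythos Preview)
Your proposal is correct and follows essentially the same approach as the paper: reduce to extending a countable model of $\ZF$ to a model of $\SPOT$ via the Enayat--Spector extended-ultrapower forcing, with \L o\'{s}'s theorem giving $\ZF$ and Transfer, genericity giving $\N$, and a fusion-type argument showing that the standard trace of any $A \subseteq \bbN$ is $\cM$-definable for $\SP'$. Two minor points of alignment: the paper obtains the countable model directly from G\"odel completeness rather than by passing to a substructure, and the Spector component $\bbQ$ of the forcing is a proper class in $\cM$ rather than a set, so your ``$\bbP \in \cM$'' should be read as a definable class forcing; neither affects the substance of your plan.
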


Theorem~\ref{TheoremA} = Theorem \textbf{A} is an immediate consequence of the following proposition.

\begin{proposition}\label{countablemodels}
Every countable model $\cM = ( M, \in^{\cM} ) $ of $\ZF$ has a countable extension $\scM =(\sM, \in^{\ast}, \st) $ to a model of $\SPOT$ in which  $M$ is the class of all standard sets.
\end{proposition}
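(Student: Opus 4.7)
The plan is to build $\scM$ as an extended ultrapower of $\cM$ along a carefully chosen generic ultrafilter, in the spirit of the Enayat--Spector template. Since $\cM$ is countable, one can force over $\cM$ to adjoin a nonprincipal ultrafilter $\cU$ on $\bbN^{\cM}$ with enough genericity to guarantee Standard Part. The underlying universe $\sM$ will consist of equivalence classes $[f]$, where $f \in M$ is an internal $\cM$-function from $\bbN^{\cM}$ into $M$, taken modulo $\cU$-equivalence; membership is defined \L{}os-style by $[f] \in^{\ast} [g] \eqi \{k \in \bbN^{\cM} \,\mid\, f(k) \in^{\cM} g(k)\} \in \cU$, and $\st([f])$ is declared to hold exactly when $f$ is $\cU$-equivalent to some constant function $c_a$ with $a \in M$. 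The embedding $a \mapsto [c_a]$ then identifies $M$ with the standard sets of $\scM$, and $\sM$ is countable because only internal functions enter the construction.

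The first step is to choose a forcing notion $\bbP \in \cM$ whose generic filter produces an ultrafilter $\cU$ with the following reflection property: for every $g \in M$ with $g \colon \bbN^{\cM} \to \cP(\bbN)^{\cM}$, the set
\[
B_g := \{ n \in \bbN^{\cM} \,\mid\, \{ k \in \bbN^{\cM} \,\mid\, n \in g(k) \} \in \cU \}
\]
is itself an element of $M$. A plausible design uses conditions that approximate $\cU$ together with finite commitments tying each $B_g$ to a ground-model subset of $\bbN^{\cM}$; density arguments inside $\cM$ then yield externally a generic filter meeting all the relevant dense sets, which delivers reflection. Countability of $\cM$ is precisely what guarantees existence of such a filter.

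With $\cU$ in hand, verifying the easy axioms is routine. \L{}os's theorem, applied externally (legitimate since $\cM$ is countable), makes $a \mapsto [c_a]$ an elementary embedding of $(M, \in^{\cM})$ into $(\sM, \in^{\ast})$ for $\in$-formulas, so $\scM$ satisfies $\ZF$ and Transfer holds for $\in$-formulas with standard parameters. Nontriviality is witnessed by $\nu := [\mathrm{id}]$: it represents an element of $\bbN^{\sM}$, and for each $n \in \bbN^{\cM}$ the complement $\{k \,\mid\, k \ne n\}$ is cofinite and hence in $\cU$, so $\nu \ne [c_n]$.

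The main obstacle, and the reason the reflection property was engineered into $\bbP$, is $\SP'$. Given any $A = [g] \in \cP(\bbN)^{\sM}$ with $g \in M$, reflection provides $B_g \in M$, so $[c_{B_g}]$ is a standard member of $\cP(\bbN)^{\sM}$. For any standard $n = [c_m]$ with $m \in \bbN^{\cM}$ one computes
\[
n \in^{\ast} [g] \;\eqi\; \{k \,\mid\, m \in g(k)\} \in \cU \;\eqi\; m \in B_g \;\eqi\; n \in^{\ast} [c_{B_g}],
\]
which is precisely $\SP'$. The genuine technical difficulty lies in designing $\bbP$ so that its generics have the reflection property while remaining nonprincipal: the dense sets guaranteeing $B_g \in M$ for each ground-model $g$ must be manufactured uniformly inside $\cM$, and one must verify that no subset of $\bbN^{\cM}$ in $\sM$ escapes having a standard shadow. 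This is the technical heart of the Enayat--Spector-style forcing argument underlying Theorem~A.
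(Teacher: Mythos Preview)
Your approach has a genuine gap at the \L{}o\'{s} step. You build $\sM$ as the ordinary ultrapower of $\cM$ by a generic ultrafilter, using only $\cM$-internal functions $f\colon \bbN^{\cM}\to M$, and then assert that ``\L{}o\'{s}'s theorem, applied externally (legitimate since $\cM$ is countable)'' gives elementarity. Countability of $\cM$ is irrelevant here: the problem is internal. In the existential step of \L{}o\'{s}'s theorem, from $\{k\mid \cM\vDash \exists x\,\phi(f(k),x)\}\in\cU$ you need a single $g\in M$ with $\cM\vDash\phi(f(k),g(k))$ on a $\cU$-large set. Without any choice in $\cM$ there is no reason such a $g$ exists in $M$, and no amount of external bookkeeping can manufacture a missing element of $M$. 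So you have no proof that $(\sM,\in^{\ast})\vDash\ZF$ or that the diagonal embedding is elementary; Transfer is therefore unproved. The paper itself remarks (at the start of Section~\ref{stand}) that the ordinary ultrapower works only when $\cM\vDash\ACC$.

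This is precisely why the paper does \emph{not} use the ordinary ultrapower. It forces with $\bbH=\bbP\times\bbQ$, where a condition in $\bbQ$ is a function $i\mapsto q(i)\subseteq\bbV^{k}$ with $q(i)\neq\emptyset$: a multivalued choice function rather than a single-valued one. The forcing relation is arranged so that ``\L{}o\'{s}'s Theorem'' (Proposition~\ref{Los}) holds in the form $\langle p,q\rangle\Vdash\phi(\dot G_{n_1},\ldots)$ iff $\forall^{\infty}i\in p\,\forall\langle x_0,\ldots,x_{k-1}\rangle\in q(i)\,\phi(x_{n_1},\ldots)$; the existential step now only requires shrinking $q(i)$ to those tuples admitting a witness and appending the witness as a new coordinate, which needs no choice. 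Elements of the extended ultrapower are then equivalence classes $G_m$ indexed by $m\in\bbN^{\cM}$, not by internal functions. Your reflection idea for $\SP'$ is in the right spirit (compare Lemma~\ref{decideN}), but it sits on top of a construction that does not yet give a model of $\ZF+\T$.
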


 \emph{Proof of Theorem ~\ref{TheoremA}}.
Suppose $\SPOT \vdash \theta$ but $\ZF \nvdash \theta$, where $\theta$ is an $\in$-sentence.
Then the theory $\ZF + \neg \theta$ is consistent, therefore it has a countable model $\cM$, 
by G\"{o}del's Completeness Theorem. Using Proposition~\ref{countablemodels} one obtains its extension  $\scM \vDash \SPOT$, so in particular $\scM \vDash \theta$ and, by Transfer in 
 $\scM$, $\cM \vDash \theta$. This is a contradiction.
\qed

The rest of this section is devoted to the proof of Proposition~\ref{countablemodels}.

\subsection{Forcing according to Enayat and Spector}\label{SS1}
We combine the forcing notion used by Enayat~\cite{E}  to construct end extensions of models of arithmetic, with the one used by Spector in ~\cite{Spr}  to produce extended ultrapowers of models $\cM$ of $\ZF$ by an ultrafilter $\cU \in \cM$.

In this subsection we work in $\ZF$, define our forcing notion and prove its basic properties. 
The next subsection deals with generic extensions of countable models of $\ZF$ and the resulting extended ultrapowers. The general reference to forcing and generic models in set theory is Jech~\cite{J2}.

The set of all natural numbers is denoted $\bbN$ and letters $m, n, k,\ell$ are reserved for variables ranging over $\bbN$. The index set over which the ultrapowers will eventually be constructed is denoted $I$.  In this section we assume $I = \bbN$. A subset $p$ of $\bbN$ is called \emph{unbounded} if $\forall m\, \exists n \in p\, (n \ge m)$ and \emph{bounded} if it is not unbounded.
Of course unbounded is the same as infinite, and bounded is the same as finite. We use this terminology with a view to  Section~\ref{idealization}, where the construction is generalized to $I = \cP^{\fin}(A)$ for any infinite set $A$. The notation $\forall^{\infy} i \in p$ (\emph{for almost all} $i \in p$)  means $\forall i \in p \setminus c$ for some bounded $c$.

As usual, the symbol $\bbV$ denotes the universe of all sets, and $V_{\alpha}$ ($\alpha$ ranges over ordinals) are the ranks of the von Neumann cumulative hierarchy.
We let $\bbF$ be the class of all functions with domain $I$.
The notation $\emptyset_k$ stands for the $k$-tuple $\la \emptyset,\ldots, \emptyset \ra$.

\begin{definition}
Let $\bbP = \{ p \subseteq I \mid p \text{ is unbounded}\}$. 
For $p, p' \in \bbP$ we say that $p'$ \emph{extends} $p$ (notation: $p' \le p$) iff $p' \subseteq p$.

Let $\bbQ = \{  q \in \bbF \mid\, 
\exists k \in \bbN \; \forall i \in I \, (q(i) \subseteq \bbV^{k} \,\wedge\, q(i) \neq \emptyset )\}$. 
The number $k$ is the \emph{rank} of $q$. We note that $q(i)$ for each $i \in I$, and $q$ itself, are sets, but $\bbQ$ is a proper class.
We let $\bar{1} = q$ where $q(i) =\{ \emptyset \}$ for all $i \in I$; $\bar{1}$ is the only $q \in \bbQ$ of rank $0$.

The forcing notion $\bbH$ is defined as follows: $\bbH = \bbP \times \bbQ$ and $\la p', q'\ra \in \bbH$ \emph{extends} $\la  p, q\ra \in \bbH$ (notation: $\la p', q'\ra  \le \la  p, q\ra$) iff $p'$ extends $p$, $\rank q' = k' \ge k = \rank q$,
and for almost all $i \in p'$ and all $\la x_0,\ldots, x_{k'-1} \ra \in q'(i)$,  $\la x_0,\ldots,x_{k-1}\ra \in q(i)$.
Every $\la p, q \ra \in \bbH$ extends $\la p, \bar{1}\ra$.
\end{definition}

The poset $\bbP$ is used to force a generic filter over $I$ as in Enayat~\cite{E}, and 
$\bbH$ forces an extended ultrapower of $\bbV$ by the generic filter $\cU$ forced by $\bbP$. It is a modification of the forcing notion from Spector~\cite{Spr}, with the difference that in~\cite{Spr} $\cU$ is not forced but assumed to be a given ultrafilter in $\bbV$.

A set $D \subseteq \bbP$ is \emph{dense in} $\bbP$ if for every $p \in \bbP$ there is $p' \in D$
such that $p'$ extends $p$.
We note that for any set $S \subseteq I$, the set 
$D_S  = \{ p \in \bbP \,\mid\, p \subseteq S \,\vee\, p \subseteq I \setminus S \} $ is dense in $\bbP$.

Similarly, a class $E \subseteq \bbH$ is \emph{dense in} $\bbH$ if for every $\la p, q \ra  \in \bbH$ there is 
$\la p' , q' \ra\in E$ such that $\la p' , q' \ra \le \la p, q \ra$.

\medskip
The \emph{forcing language} $\fL$ has a constant symbol $\check{z}$ for every  $z \in \bbV$ (which we identify with $z$ when no confusion threatens), and a constant symbol $\dot{G}_n$ for each  $n \in \bbN$.
Given an $\in$-formula $\phi( w_1,\ldots, w_r, v_1,\ldots,v_s)$, we define the \emph{forcing relation} $\la p, q \ra  \Vdash \phi(\check{z}_1,\ldots,\check{z}_r, \dot{G}_{n_1},\ldots,\dot{G}_{n_s})$ for $\la p, q \ra \in \bbH$ by meta-induction on the logical complexity of $\phi$.
We use $\neg, \wedge$ and $\exists$ as primitives and consider the other logical connectives and quantifiers as defined in terms of these. Usually, we suppress the explicit listing  in $\phi$ of the constant symbols $\check{z}$
for the elements of $\bbV$.

\begin{definition}\label{forcing}(Forcing relation.)
\begin{enumerate}
\item
$\la p, q \ra  \Vdash \check{z}_{1} = \check{z}_{2}$  iff
$z_1 = z_2$.
\item
$\la p, q \ra  \Vdash \check{z}_{1} \in \check{z}_{2}$  iff
$z_1 \in z_2$.
\item
$\la p, q \ra  \Vdash \dot{G}_{n_1} = \dot{G}_{n_2}$  iff 
$\rank q = k > n_1, n_2$ and \\
$\forall^{\infy} i \in p \, \forall \la x_0,\ldots, x_{k-1}\ra \in q(i)\;( x_{n_1}= x_{n_2}).$
\item
$\la p, q \ra  \Vdash \dot{G}_{n_1} \in \dot{G}_{n_2}$  iff 
$\rank q = k > n_1, n_2$ and \\
$\forall^{\infy} i \in p \, \forall \la x_0,\ldots, x_{k-1}\ra \in q(i)\; (x_{n_1}\in x_{n_2}).$
\item
$\la p, q \ra  \Vdash \dot{G}_{n} = \check{z}$  iff $\la p, q \ra  \Vdash \check{z} = \dot{G}_{n}$ iff
$\rank q = k > n$ and \\
$\forall^{\infy} i\in p \, \forall \la x_0,\ldots, x_{k-1}\ra \in q(i)\;( x_{n}= z).$
\item
$\la p, q \ra  \Vdash \check{z} \in \dot{G}_{n}$ iff 
$\rank q = k > n$ and \\
$\forall^{\infy} i\in p \, \forall \la x_0,\ldots, x_{k-1}\ra \in q(i)\; (z \in x_{n}).$
\item
$\la p, q \ra  \Vdash \dot{G}_{n} \in \check{z}$ iff
$\rank q = k > n$ and \\
$\forall^{\infy} i\in p \, \forall \la x_0,\ldots, x_{k-1}\ra \in q(i)\; (x_{n}\in z).$
\item
$\la p, q \ra  \Vdash \neg \phi( \dot{G}_{n_1},\ldots,\dot{G}_{n_s})$ iff  
$\rank  q = k > n_1,\ldots, n_s$ and 
there is no $\la p', q' \ra $ extending $\la p, q \ra$  such that 
$\la p', q' \ra \Vdash \phi( \dot{G}_{n_1},\ldots,\dot{G}_{n_s}).$
\item
$\la p, q \ra  \Vdash (\phi \,\wedge\, \psi)( \dot{G}_{n_1},\ldots,\dot{G}_{n_s})$ iff \\
$\la p, q \ra  \Vdash \phi( \dot{G}_{n_1},\ldots,\dot{G}_{n_s})$ and $\la p, q \ra  \Vdash \psi( \dot{G}_{n_1},\ldots,\dot{G}_{n_s})$.
\item
$\la p, q \ra  \Vdash \exists v\, \psi( \dot{G}_{n_1},\ldots,\dot{G}_{n_s}, v)$ iff $\rank  q = k > n_1,\ldots, n_s$ and 
for every $\la p', q' \ra $ extending $\la p, q \ra$ there exist $\la p'', q'' \ra $ extending $\la p', q' \ra$ and $m \in \bbN$ such that 
$\la p'', q'' \ra \Vdash \psi(\dot{G}_{n_1},\ldots,\dot{G}_{n_s}, \dot{G}_{m}).$
\end{enumerate}
\end{definition}

\begin{lemma}\label{basic1}$\,$(Basic properties of forcing)
\begin{enumerate}
\item
If $ \la p, q \ra \Vdash \phi$ and $ \la p', q' \ra$ extends $ \la p, q \ra$, then 
$ \la p', q' \ra \Vdash \phi$.
\item
No $ \la p, q \ra $ forces both $\phi$ and $ \neg \phi$.
\item
Every $\la p, q \ra$ extends to $ \la p', q' \ra$ such that 
$\la p', q' \ra \Vdash \phi$ or\\ $ \la p', q' \ra \Vdash \neg \phi$. 
\item
If $\la p, q \ra \Vdash \phi$ and $p' \setminus p$ is bounded, then $\la p', q \ra \Vdash \phi$.
\end{enumerate}
\end{lemma}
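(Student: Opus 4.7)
The plan is to prove (1) and (4) simultaneously by meta-induction on the logical complexity of $\phi$, then deduce (2) from (1) plus reflexivity of $\le$, and (3) from the definition of $\Vdash \neg$ after a preliminary rank padding of $q$. Running (1) and (4) in parallel is necessary because the $\neg$- and $\exists$-steps of (4) will call on (1) for subformulas, while (1) itself relies on transitivity of $\le$, which is a straightforward check from the extension clause defining $\bbH$.

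For the atomic base, every clause of Definition~\ref{forcing}(1)--(7) has the shape ``$\rank q = k > \max_j n_j$ and $\forall^{\infy} i \in p,\; \forall \la x_0,\ldots,x_{k-1}\ra \in q(i),\; \theta(x_{n_1},\ldots,x_{n_s})$'' for a quantifier-free $\in$-statement $\theta$. For (1), if $\la p', q'\ra \le \la p, q\ra$ then the extension relation supplies a cofinite $S \subseteq p'$ on which every tuple of $q'(i)$ truncates into $q(i)$; intersecting $S$ with the cofinite-in-$p$ witness for $\la p, q\ra$ gives the atomic forcing condition at $\la p', q'\ra$. For (4) we keep $q$ fixed, and the only observation is that if $c$ is the exceptional bounded set on $p$, then $p' \setminus ((p' \setminus p) \cup c)$ is still cofinite in $p'$ since $p' \setminus p$ is bounded by hypothesis.

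The inductive steps are routine except for (4) under $\neg$ and $\exists$. Monotonicity of $\Vdash \neg \phi$ is immediate by transitivity of $\le$. For (4) applied to $\neg\phi$, suppose $\la p, q\ra \Vdash \neg \phi$ and $p' \setminus p$ is bounded; if some $\la p'', q''\ra \le \la p', q\ra$ forced $\phi$, set $p''' := p'' \cap p$, which is still unbounded because $p'' \setminus p''' \subseteq p' \setminus p$ is bounded and $p'' \in \bbP$. Then $\la p''', q''\ra \le \la p'', q''\ra$, so by the inductive (1) applied to $\phi$ we get $\la p''', q''\ra \Vdash \phi$; but $\la p''', q''\ra \le \la p, q\ra$ contradicts $\la p, q\ra \Vdash \neg \phi$. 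The $\exists$-case is analogous: given an extension of $\la p', q\ra$, restrict its first coordinate by intersecting with $p$, feed the resulting extension of $\la p, q\ra$ to the hypothesis, and observe that the further extension produced still lies below the original extension of $\la p', q\ra$. Property (2) then follows from (1) and reflexivity of $\le$: if $\la p, q\ra$ forced both $\phi$ and $\neg \phi$, it would be an extension of itself forcing $\phi$, contradicting the clause for $\neg$. For (3), given $\la p, q\ra$ and $\phi$ involving $\dot{G}_{n_1},\ldots,\dot{G}_{n_s}$, first pad $q$ to a $q^*$ of rank exceeding all $n_j$ (for instance by setting $q^*(i) = \{\la x_0,\ldots,x_{k-1}, \emptyset,\ldots,\emptyset\ra : \la x_0,\ldots,x_{k-1}\ra \in q(i)\}$); then either some $\la p', q'\ra \le \la p, q^*\ra$ forces $\phi$, or by definition $\la p, q^*\ra \Vdash \neg \phi$.

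The main obstacle is the $\neg$- and $\exists$-steps of (4), where one must simultaneously construct a condition that is a legal extension of $\la p, q\ra$ (so the hypothesis applies) and that differs from the given extension of $\la p', q\ra$ by only a bounded change in the first coordinate (so monotonicity transfers the forcing back). The unboundedness of $p'' \cap p$, whenever $p'' \in \bbP$ and $p'' \setminus p$ is bounded, is the one quantitative input that makes both propagations go through, and it is precisely where ``bounded = finite'' is used.
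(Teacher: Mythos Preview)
Your proof is correct and supplies the details the paper omits; the paper's own proof is simply ``(1)--(3) are immediate from the definition of forcing and (4) can be proved by induction on the complexity of $\phi$.'' One minor note: running (1) and (4) in parallel is convenient but not strictly \emph{necessary} as you claim, since (1) for the $\neg$ and $\exists$ clauses follows directly from transitivity of $\le$ without any inductive appeal to subformulas---one can establish (1) outright and then prove (4) by induction using (1) as a black box.
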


\begin{proof}
(1) - (3) are immediate from the definition of forcing and (4) can be proved by induction on the complexity of $\phi$.
\end{proof}
The following proposition establishes a relationship between this forcing and ultrapowers.
\begin{proposition}\label{Los} (``\L o\'{s}'s  Theorem'')
Let $\phi(v_1,\ldots,v_s)$ be an $\in$-formula with parameters from $\bbV$.\\
Then
$\la p, q \ra  \Vdash  \phi(\dot{G}_{n_1},\ldots,\dot{G}_{n_s})$ iff
$\rank q  = k > n_1,\ldots, n_s$ and \\
$\forall^{\infy} i \in p \, \forall \la x_0,\ldots, x_{k-1}\ra \in q(i)\; 
\phi(x_{n_1},\ldots, x_{n_s}).$
\end{proposition}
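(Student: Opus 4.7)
\smallskip
\emph{Proof plan.} I would prove the equivalence by meta-induction on the logical complexity of $\phi$. The atomic cases are exactly clauses (1)--(7) of Definition~\ref{forcing}, once one observes that for atomic $\phi$ the ``$\forall^{\infy} i\in p\;\forall\la x_0,\ldots,x_{k-1}\ra\in q(i)$'' requirement on $p,q$ coincides verbatim with the displayed condition. The conjunction case follows at once from clause (9), since ``$\forall^{\infy}$'' is closed under finite intersection of the associated cobounded sets. Only the cases of $\neg$ and $\exists$ require genuine work.

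\smallskip
For negation, assume by induction that the equivalence holds for $\phi$. For the direction $\Vdash\neg\phi \Rightarrow \forall^{\infy}\forall\vec{x}\in q(i)\;\neg\phi$, I argue by contraposition: if the set $B=\{i\in p\mid \exists\vec{x}\in q(i)\;\phi(x_{n_1},\ldots,x_{n_s})\}$ is unbounded, then setting $p'=B$ and
\[
q'(i)=\{\vec{x}\in q(i)\mid \phi(x_{n_1},\ldots,x_{n_s})\}\text{ for }i\in p',\quad q'(i)=\{\emptyset_k\}\text{ otherwise},
\]
yields $\la p',q'\ra\in\bbH$ with $\la p',q'\ra\le\la p,q\ra$, and the induction hypothesis gives $\la p',q'\ra\Vdash\phi$, contradicting $\la p,q\ra\Vdash\neg\phi$. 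Note that $q'(i)$ is nonempty on $p'$ by definition of $B$, and $q'$ is a genuine set by Separation. The converse direction combines Lemma~\ref{basic1}(4) with the induction hypothesis: if $\forall^{\infy} i\in p\;\forall\vec{x}\in q(i)\;\neg\phi$, and some $\la p',q'\ra\le\la p,q\ra$ were to force $\phi$, then induction would force both $\phi$ and $\neg\phi$ on almost all of $p'$ simultaneously, so almost all $q'(i)$ would be empty---contradicting $q'\in\bbQ$.

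\smallskip
The existential case is the main obstacle. The forward direction mirrors the negation argument: if $\la p,q\ra\Vdash\exists v\,\psi(v)$ but the set $B=\{i\in p\mid \exists\vec{x}\in q(i)\;\neg\exists v\,\psi(\vec{x}_{\vec{n}},v)\}$ is unbounded, restrict to $B$ and to the tuples witnessing failure to obtain an extension $\la p',q'\ra$ on which no $\la p'',q''\ra\le\la p',q'\ra$ can force any $\psi(\dot G_m)$, by the induction hypothesis applied coordinate-wise. The delicate part is the backward direction: given $\forall^{\infy} i\in p\;\forall\vec{x}\in q(i)\;\exists v\,\psi(\vec{x}_{\vec{n}},v)$, I must, for an arbitrary $\la p',q'\ra\le\la p,q\ra$, exhibit $\la p'',q''\ra\le\la p',q'\ra$ and $m$ with $\la p'',q''\ra\Vdash\psi(\dot G_m)$. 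I would take $m=k'=\rank q'$, and for each $i\in p'$ in the cobounded ``good'' set use the Collection/Replacement schema of $\ZF$ to choose an ordinal $\alpha_i$ such that every $\vec{x}\in q'(i)$ has a $\psi$-witness in $V_{\alpha_i}$; then set
\[
q''(i)=\{\la\vec{x},v\ra\in q'(i)\times V_{\alpha_i}\mid \psi(x_{n_1},\ldots,x_{n_s},v)\},
\]
extended arbitrarily (e.g.\ by $\{\emptyset_{k'+1}\}$) on the bounded exceptional set. The function $i\mapsto\alpha_i$ is a set by Replacement, so $q''\in\bbQ$; by construction $\la p',q''\ra\le\la p',q'\ra$, and the induction hypothesis gives $\la p',q''\ra\Vdash\psi(\dot G_{k'})$. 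The subtlety I expect to watch closely is exactly this use of Collection to ensure $q''$ remains a genuine $\bbQ$-condition without appealing to any form of choice beyond what $\ZF$ provides.
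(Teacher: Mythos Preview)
Your proposal is correct and follows essentially the same inductive strategy as the paper. The one minor technical difference is in the backward direction of the existential step: the paper invokes the Reflection Principle once to obtain a \emph{single} ordinal $\alpha$ with $V_\alpha$ containing witnesses for all $i$ and all tuples simultaneously, whereas you use Collection pointwise to build a function $i\mapsto\alpha_i$ and then Replacement to ensure $q''$ is a set---both are legitimate $\ZF$ arguments and the rest of the proof is identical.
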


\begin{proof}
For atomic formulas (cases (1) - (7)) the claim is immediate from the definition.
Case (9) is also trivial (union of two bounded sets is bounded).

Case (8): Let 
$c = \{i \in p\,\mid\, \forall \la x_0,\ldots,x_{k-1}\ra \in q (i) \;\neg \phi( x_{n_1},\ldots,x_{n_s})\}$.\\
We need to prove that $\la p, q \ra  \Vdash \neg \phi$, iff $p \setminus c$ is bounded.

Assume that  $\la p, q \ra  \Vdash \neg \phi$ and $p \setminus c $ is unbounded. We let $p' = p \setminus c$ and 
$q' (i) =\{ \la x_0,\ldots,x_{k-1}\ra \in q (i) \,\mid\, \phi(x_{n_1},\ldots,x_{n_s}) \}$ for $i \in p'$,
$q'(i) = \{\emptyset_k\}$ for $i \in I \setminus p'$. Then $\la p', q' \ra \in \bbH$ extends $\la p, q \ra$ and, by the inductive assumption, $\la p', q' \ra \Vdash \phi$, a contradiction. 

Conversely, assume  $\la p, q \ra  \nVdash \neg \phi$ and $p \setminus c $ is bounded.
Then there is $\la p', q' \ra $ of rank $k'$ extending $\la p, q \ra$ such that $\la p', q' \ra \Vdash \phi$. By the inductive assumption, there is a bounded set $d$ such that  
$$\forall i \in (p' \setminus d) \, \forall \la x_0,\ldots,x_{k'-1}\ra \in q' (i) \; \phi( x_{n_1},\ldots,x_{n_s}).$$
But $(p \setminus c) \cup d$ is a bounded set, so there exist $i \in (p' \cap c)\setminus d$. For such $i$ and $\la x_0,\ldots,x_{k'-1}\ra \in q' (i)$ one has both $\neg \phi(x_{n_1},\ldots ,x_{n_s})$
and $ \phi( x_{n_1},\ldots, x_{n_s})$, a contradiction.

Case (10):\\
Let  
$c = \{i \in p\,\mid\, \forall \la x_0,\ldots,x_{k-1}\ra \in q (i) \;\exists v\; \psi( x_{n_1},\ldots,x_{n_s}, v)\}.$
We need to prove that $\la p, q \ra  \Vdash \exists v\,  \psi$ iff $p \setminus c$ is bounded.

Assume that  $\la p, q \ra  \Vdash \exists v\, \psi$ and $p \setminus c $ is unbounded. 
We let $p' = p \setminus c $ and 
$q' (i) =\{ \la x_0,\ldots,x_{k-1}\ra \in q (i) \,\mid\, \neg\, \exists v \, \psi(x_{n_1},\ldots,x_{n_s}, v) \}$ for $i \in p'$; $q'(i) = \{\emptyset_k\}$ for $i \in I \setminus p'$.
Then $\la p', q'\ra$ extends $\la p, q \ra$ and,
by the definition of $\Vdash$, there exist  $\la p'', q'' \ra $  extending $\la p', q' \ra $ with $ \rank q''= k''$, and $m < k''$ such that 
$\la p'', q'' \ra  \Vdash \psi(\dot{G}_{n_1},\ldots,\dot{G}_{n_s},\dot{G}_m)$.
By the inductive assumption, there is a bounded set $d$ such that 
$$\forall i \in (p'' \setminus d)\, \forall \la x_0,\ldots,x_{k''-1}\ra \in q'' (i) \; \psi(x_{n_1},\ldots x_{n_s}, x_m).$$ Hence 
$$\forall i \in (p'' \setminus d)\, \forall \la x_0,\ldots,x_{k''-1}\ra \in q'' (i) \; \exists v\, \psi(x_{n_1},\ldots x_{n_s}, v).$$
But $i \in p'' \setminus d$ implies $i \in p'$; this contradicts the definition of $q'$.

Assume that  $p \setminus c$ is bounded. 
By the Reflection Principle in $\ZF$ there is a least von Neumann rank $V_{\alpha} $ such that for all $i \in c$ and all $\la x_0,\ldots, x_{k-1}\ra \in q(i)$ there exists $v \in V_{\alpha}$ such that $ \psi (x_{n_1},\ldots x_{n_s}, v)$.
Let $\la p', q' \ra$ be any condition extending $\la p, q \ra$ and let $k' =\rank  q' $. 
We let  $p'' =  p' \cap c$ and    
\begin{align*}  
q''(i) = &\{ \la x_0,\ldots, x_{k'-1}, x_{k'}\ra  \,\mid\\
& \la x_0,\ldots, x_{k'-1}\ra \in q'(i) \,\wedge \,
\psi (x_{n_1},\ldots x_{n_s}, x_{k'}) \,\wedge\, x_{k'} \in V_{\alpha}\} 
\end{align*}
for $i \in p''$, $q''(i) =\{\emptyset_{k'}\}$ otherwise. Then $\la p'', q'' \ra $ extends $\la p', q' \ra$ and, by the inductive assumption, 
$\la p'', q'' \ra \Vdash \psi  (\dot{G}_{n_1},\ldots,\dot{G}_{n_s}, \dot{G}_{k'})$.
This proves that $\la p, q \ra  \Vdash \exists v\, \psi$.
\end{proof}

We observe that if $q$ is in $\bbQ$ and $\ell < k =\rank q$, then $q\uh \ell$ defined by 
$(q\uh \ell) (i) = \{ \la x_0, \ldots, x_{\ell-1} \ra  \,\mid\, \exists x_{\ell},\ldots, x_{k-1} \,
\la x_0, \ldots, x_{k-1} \ra \in q(i) \}$ is in $\bbQ$.

\begin{corollary}
If  $\rank  q = k > n_1,\ldots, n_s$, $\la p', q' \ra$ extends $\la p, q \ra$ and 
$\la p', q' \ra  \Vdash  \phi(\dot{G}_{n_1},\ldots,\dot{G}_{n_s})$, then 
$\la p', q' \uh k \ra$ extends $\la p, q \ra$ and 
$\la p', q'\uh k \ra  \Vdash  \phi(\dot{G}_{n_1},\ldots,\dot{G}_{n_s})$.
\end{corollary}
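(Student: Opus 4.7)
The plan is to deduce the corollary directly from Proposition~\ref{Los} together with the observation that the formula $\phi(\dot{G}_{n_1},\ldots,\dot{G}_{n_s})$ only mentions coordinates indexed by $n_1,\ldots,n_s$, all of which are less than $k$, so truncating $q'$ at rank $k$ cannot affect forcing.

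First I would verify that $\la p',q'\uh k\ra$ extends $\la p,q\ra$. By assumption $p'\subseteq p$, and $\rank(q'\uh k)=k=\rank q$, so the rank condition holds. The nontrivial extension condition requires that for almost all $i\in p'$, every $\la x_0,\ldots,x_{k-1}\ra\in(q'\uh k)(i)$ lies in $q(i)$. But by definition of $q'\uh k$, any such tuple is the first-$k$-coordinate projection of some $\la x_0,\ldots,x_{k'-1}\ra\in q'(i)$ (with $k'=\rank q'$), and the hypothesis that $\la p',q'\ra\le\la p,q\ra$ supplies, for almost all $i\in p'$, the inclusion $\la x_0,\ldots,x_{k-1}\ra\in q(i)$ for every such extended tuple. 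This gives exactly what we need.

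Next I would apply Proposition~\ref{Los} to the hypothesis $\la p',q'\ra\Vdash\phi(\dot{G}_{n_1},\ldots,\dot{G}_{n_s})$. It yields a bounded $c$ such that for every $i\in p'\setminus c$ and every $\la x_0,\ldots,x_{k'-1}\ra\in q'(i)$, $\phi(x_{n_1},\ldots,x_{n_s})$ holds. Since each $n_j<k\le k'$, the truth of $\phi(x_{n_1},\ldots,x_{n_s})$ depends only on $\la x_0,\ldots,x_{k-1}\ra$; hence the statement is equivalent to the assertion that for every $i\in p'\setminus c$ and every $\la x_0,\ldots,x_{k-1}\ra\in(q'\uh k)(i)$, $\phi(x_{n_1},\ldots,x_{n_s})$ holds. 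Applying Proposition~\ref{Los} in the other direction with $q'\uh k$ in place of $q'$ (noting $\rank(q'\uh k)=k>n_1,\ldots,n_s$) gives $\la p',q'\uh k\ra\Vdash\phi(\dot{G}_{n_1},\ldots,\dot{G}_{n_s})$, completing the argument.

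There is no real obstacle here; the only point that needs a touch of care is to confirm that $(q'\uh k)(i)$ is nonempty for the $i$ in question (so that $q'\uh k\in\bbQ$), which follows immediately because $q'(i)$ is nonempty and projection preserves nonemptiness. Everything else is a transparent application of Proposition~\ref{Los}.
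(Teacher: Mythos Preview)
Your proposal is correct and follows exactly the intended route: the paper presents this as an immediate corollary of Proposition~\ref{Los} (``\L o\'{s}'s Theorem''), and your argument spells out precisely that deduction, including the easy verification that $\la p', q'\uh k\ra \le \la p, q\ra$ and the observation that the truth of $\phi(x_{n_1},\ldots,x_{n_s})$ depends only on coordinates below $k$.
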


\begin{lemma}\label{fixf}
Let $z \in \bbV$. 
For every $\la p, q \ra$ there exist $\la p, q' \ra$ extending $\la p, q \ra$ and  $m < k' = \rank  q'  $ such that $\la p, q' \ra \Vdash \check{z} = \dot{G}_m$.
\end{lemma}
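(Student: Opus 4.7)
The plan is to take $q'$ to be a simple ``padded'' version of $q$: set $k' = k+1$ where $k = \rank q$, let $m = k$, and define
\[
q'(i) = \{\la x_0,\ldots,x_{k-1}, z\ra \,\mid\, \la x_0,\ldots,x_{k-1}\ra \in q(i)\}
\]
for $i \in p$, and $q'(i) = \{\emptyset_{k+1}\}$ for $i \in I \setminus p$. The idea is to freeze the new $k$-th coordinate at the value $z$ so that the generic element $\dot{G}_k$ is forced to equal $\check{z}$.

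First I would verify that $q' \in \bbQ$: by construction $q'(i) \subseteq \bbV^{k+1}$ and $q'(i) \neq \emptyset$ for all $i$ (using that $q(i) \neq \emptyset$ on $p$ and the explicit nonempty value off $p$), so $\rank q' = k+1$. Next I would check that $\la p, q'\ra \le \la p, q\ra$: the first coordinate $p$ extends itself trivially, the rank has increased as required, and for every $i \in p$ (in particular for almost all) and every $\la x_0,\ldots,x_k\ra \in q'(i)$, the truncation $\la x_0,\ldots,x_{k-1}\ra$ lies in $q(i)$ by the definition of $q'(i)$.

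Finally, to conclude $\la p, q'\ra \Vdash \check{z} = \dot{G}_k$, I would apply clause~(5) of Definition~\ref{forcing}: $\rank q' = k+1 > k = m$, and by construction, for every $i \in p$ and every $\la x_0,\ldots,x_k\ra \in q'(i)$ the last coordinate satisfies $x_k = z$. Hence the required universal statement holds on all of $p$, a fortiori for almost all $i \in p$.

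There is essentially no obstacle here; the only detail that requires any care is making $q'(i)$ nonempty on $I \setminus p$ so that $q' \in \bbQ$ as a total function on $I$, which is handled by the explicit value $\{\emptyset_{k+1}\}$ off $p$ (this choice is harmless because the extension relation only looks at ``almost all $i \in p$''). One could alternatively invoke Proposition~\ref{Los} directly instead of unwinding clause~(5), but the atomic clause is immediate and needs no further appeal.
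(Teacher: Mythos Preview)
Your proof is correct and follows essentially the same approach as the paper: append a new coordinate frozen at $z$, take $m=k$, $k'=k+1$. The only difference is that you split the definition of $q'(i)$ into cases on and off $p$, whereas the paper defines $q'(i)$ uniformly for all $i\in I$; this works because $q\in\bbQ$ already guarantees $q(i)\neq\emptyset$ for every $i\in I$, so the padded set is nonempty everywhere and your separate treatment of $I\setminus p$ is unnecessary (though harmless).
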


\begin{proof}
Let $q'(i) = \{ \la x_0,\ldots, x_{k-1}, x_{k}\ra  \,\mid\, \la x_0,\ldots, x_{k-1}\ra \in q(i) 
 \,\wedge\, x_k =z\}$, and $m = k$,  $k' = k+1$.
\end{proof}

We write $\la p, q\ra \Vdash \check{z} = \dot{G}_m$ as $\la p, q\ra \Vdash z = \dot{G}_m$, and $\la p, q\ra \Vdash \check{z} \in \dot{G}_m$ as $\la p, q\ra \Vdash z\in \dot{G}_m$.
We say that $\la p, q \ra$ \emph{decides} $\phi$ if $\la p, q \ra \Vdash \phi$ or $ \la p, q \ra \Vdash \neg \phi$. 
The following lemma is needed for the proof that the extended ultrapower satisfies $\SP$.

\begin{lemma}\label{decideN}
For every $\la p, q\ra $ and $m <  k =\rank  q$ there is 
$\la p', q'\ra  $ that extends $\la p, q\ra $ and is such that for every $n \in \bbN$,
$\la p', q'\ra $ decides $ n \in \dot{G}_m$.
\end{lemma}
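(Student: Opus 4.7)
The plan is a fusion construction: iteratively refine $\la p, q\ra$ to decide each atomic statement $n \in \dot{G}_m$ in turn, then diagonalize to obtain a single condition deciding all of them simultaneously.

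Set $\la p_0, q_0\ra = \la p, q\ra$. Given $\la p_n, q_n\ra$ of rank $k$ with $p_n$ unbounded and $q_n(i) \sus q(i)$ for $i \in p_n$, let $S_n = \{\,i \in p_n : \forall \vec{x} \in q_n(i),\, n \in x_m\,\}$. If $p_n \setminus S_n$ is bounded, put $\la p_{n+1}, q_{n+1}\ra := \la p_n, q_n\ra$; by Proposition~\ref{Los} this already forces $n \in \dot{G}_m$. Otherwise put $p_{n+1} := p_n \setminus S_n$ (still unbounded) and $q_{n+1}(i) := \{\vec{x} \in q_n(i) : n \notin x_m\}$ for $i \in p_{n+1}$ (nonempty by definition of $S_n$), with $q_{n+1}(i) = \{\emptyset_k\}$ otherwise. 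In this second case I claim $\la p_{n+1}, q_{n+1}\ra \Vdash \neg(n \in \dot{G}_m)$: since the rank is unchanged at $k$, any extension $\la p'', q''\ra$ satisfies, for almost all $i \in p''$ and every $\la x_0,\ldots,x_{k''-1}\ra \in q''(i)$, that the first-$k$ projection $\la x_0,\ldots,x_{k-1}\ra$ lies in $q_{n+1}(i)$ and hence $n \notin x_m$; by Proposition~\ref{Los} no such extension forces $n \in \dot{G}_m$, which via clause~(8) of Definition~\ref{forcing} is the desired negation.

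Finally, diagonalize: let $i_n$ be the least element of $p_n$ strictly above $i_{n-1}$ (setting $i_{-1} = -1$), and put $p' := \{i_n : n \in \bbN\}$, $q'(i_n) := q_n(i_n)$ for each $n$, and $q'(i) := \{\emptyset_k\}$ for $i \notin p'$. Then $p'$ is unbounded, and for each fixed $n$ the tail $\{i_j : j \ge n\}$ is cofinite in $p'$, lies inside $p_n$, and satisfies $q'(i_j) = q_j(i_j) \sus q_n(i_j)$. By Proposition~\ref{Los} the decision made at $\la p_{n+1}, q_{n+1}\ra$ transfers up to $\la p', q'\ra$: in the positive case cofinitely many $i \in p'$ already have every tuple of $q'(i)$ satisfying $n \in x_m$; in the negative case the same pinning argument as above, now applied to extensions of $\la p', q'\ra$, rules out any descendant forcing $n \in \dot{G}_m$. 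The principal subtlety is precisely this stability of the negative decision under arbitrary future extensions, which is guaranteed by holding $\rank q_n$ fixed at $k$ throughout the construction so that the projection clause of the extension relation propagates the constraint on the first $k$ coordinates to every descendant.
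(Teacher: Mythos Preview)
Your proof is correct and follows essentially the same fusion strategy as the paper: iteratively refine to decide each $n \in \dot{G}_m$ while keeping the rank fixed at $k$, then diagonalize. The only cosmetic differences are that your case split is on whether $p_n \setminus S_n$ is bounded (the paper splits on whether $S_n$ is unbounded), your diagonal set $p'$ consists just of the points $i_n$ rather than the interval blocks $[i_n, i_{n+1}) \cap p_n$, and your verification of the negative case via clause~(8) re-derives by hand what Proposition~\ref{Los} already gives directly for the formula $\neg(n \in v)$.
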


\begin{proof}
We first construct a sequence $\la \la p_n, q_n \ra\,\mid\, n \in \bbN \ra$ 
such that $\la p_0, q_0 \ra = \la p, q \ra $ and for each $n$, $\rank q_n = k$, 
 $p_{n+1} \subset p_n$, $\la p_{n+1}, q_{n+1}\ra$ extends $\la p_{n }, q_{n }\ra$, and 
$\la p_{n+1}, q_{n+1}\ra $ decides $n\in \dot{G}_m$.

Given $p_n$,
let $c= \{i \in p_n\,\mid\, \forall \la x_0,\ldots,x_{k-1}\ra \in q_n(i) \, (n \in x_m)\}$. 
If $c$ is unbounded, we let $p'_{n+1} = c$ and $q_{n+1} = q_n$.
Otherwise $p_n \setminus c$ is unbounded and 
 we let $p'_{n+1} = p_n \setminus c$ and $q_{n+1}(i) =\{ \la x_0,\ldots,x_{k-1}\ra \in q_n(i) \;\mid\, n \notin x_m\}$ for $i \in p'_{n+1}$,  $q_{n+1}(i) =\{\emptyset_k\}$ otherwise.
We obtain $p_{n+1} $ from $p'_{n+1} $ by omitting the least element of $p'_{n+1} $.
Proposition~\ref{Los} implies that $\la p_{n+1}, q_{n+1}\ra$ decides $n\in \dot{G}_m$.

Let $i_n$ be the least element of $p_{n}$.
We define $\la p' , q'\ra$ as follows:
$i \in p'$ iff $i \in  p_n$ and $q'(i) = q_n(i)$, where $i_n \le i < i_{n+1}$;
$q'(i) =\{\emptyset_k\}$ otherwise.
It is clear from the construction and Proposition~\ref{Los} that $\la p', q'\ra \in \bbH$, it extends $\la p, q\ra$, and it decides  $n \in \dot{G}_m$ for every $n \in \bbN$.
\end{proof}


\subsection{Extended ultrapowers}
\label{extult}
In this subsection we define the extended ultrapower of a countable model of $\ZF$ by a generic filter $\cU$, prove some fundamental properties of this structure, and conclude that it is a model of $\SPOT$. 

We take Zermelo-Fraenkel set theory as our metatheory, but the proof employs  very little of its powerful machinery.  Subsection~\ref{finitistic} explains how the proof given below can be converted into a finitistic proof.

We use $\omega$ for the set of natural numbers in the metatheory, and  $r,s$ as variables ranging over $\omega$.
A set $S$ is \emph{countable} if there is a mapping of $\omega$ onto $S$.

Let $\cM = (M, \in^{\cM})$ be a countable model of $\ZF$. Concepts defined in Subsection~\ref{SS1} make sense in $\cM$ and all results of~\ref{SS1} hold in $\cM$. When $\cM$ is understood, we use the notation and terminology from~\ref{SS1} for the  concepts in the sense of  $\cM$; thus $\bbN$ for $\bbN^{\cM}$, ``unbounded'' for ``unbounded in the sense of $\cM$'', $\bbP$ for $\bbP^{\cM}$, $\Vdash$ for 
``$\Vdash$ in the sense of $\cM$'', etc. The model $\cM$ need not be well-founded externally, and $\omega$ is isomorphic to an initial segment of $\bbN$ which may be proper.

\begin{definition}
$\cU \subseteq M$ is a \emph{filter on} $\bbP $ if 
\begin{enumerate}
\item
$\cM  \vDash``p \in \bbP$'' for every $p \in \cU$;
\item
If $p \in \cU$ and $\cM \vDash ``p' \in \bbP \,\wedge\,  p \text{ extends } p'$'', then $p' \in \cU$;
\item
For eny $p_1, p_2 \in \cU$ there is $p \in \cU$ such that $\cM \vDash ``p \text{ extends } p_1 \,\wedge\, p \text{ extends } p_2$''.
\end{enumerate}

A filter $\cU$ on $\bbP$ is $\cM$-\emph{generic} if for every $D\in M$ such that $\cM \vDash ``D\text{ is dense in } \bbP$'' there is $p \in \cU$ for which $p \in^{\cM} D$.
\end{definition}

Since $\bbP$ has only countably many dense subsets  in  $\cM$, $\cM$-generic filters are easily constructed by recursion. Let $\la p_{r} \,\mid\,r \in \omega \ra$ be an enumeration of $\bbP$ and $\la D_{r} \,\mid\,r \in \omega \ra$ be an enumeration of all dense subsets of $\bbP$ in $\cM$. Let $q_0 =p_0$ and for each $s \in \omega $ let $q_{s+1} = p_r$ for the least $r$ such that  $\cM \vDash ``p_{r} \text{ extends } q_{s} \,\wedge\,p_{r} \in \cD_{s}$''.Then let $\cU = \{ p \in M \,\mid \, \cM \vDash ``p \in \bbP \,\wedge\, q_{s} \text{ extends } p \text
{'' for some } s \in \omega\}$. 

$\cM$-generic filters $\cG \subseteq M \times M$ on $\bbH$ are defined and constructed analogously.

\begin{lemma}
If $\cG$ is an $\cM$-generic filter on $\bbH$, then $\cU = \{ p \in \bbP \,\mid\, \exists q \, \la p, q\ra \in \cG\}$ is an $\cM$-generic filter on $\bbP$.
\end{lemma}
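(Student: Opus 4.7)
The plan is to verify the two requirements separately: that $\cU$ is a filter on $\bbP$, and that $\cU$ meets every dense subset of $\bbP$ that belongs to $M$. Both parts will follow from the corresponding properties of $\cG$ by projecting conditions in $\bbH$ onto their first coordinate.

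First I would check the three filter clauses. Membership in $\bbP$ is built into the definition. For upward closure, suppose $p \in \cU$ and $\cM \vDash ``p'\in \bbP \wedge p \text{ extends } p'$''; pick $q$ with $\la p,q\ra\in \cG$. Then $\la p,q\ra$ extends $\la p',q\ra$ in $\bbH$ (same $q$, weaker first coordinate satisfies the definition of extension since $\rank q' = \rank q$ and the second-coordinate condition is trivial), so by upward closure of $\cG$ we get $\la p',q\ra\in\cG$, hence $p'\in\cU$. For compatibility, given $p_1,p_2\in\cU$ with witnesses $q_1,q_2$, the filter $\cG$ supplies a common extension $\la p,q\ra$ of $\la p_1,q_1\ra$ and $\la p_2,q_2\ra$; then $p\in\cU$ and $p$ extends both $p_1$ and $p_2$.

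Next I would handle genericity. Given $D\in M$ with $\cM \vDash ``D \text{ is dense in } \bbP$'', define
\[
E = \{\la p,q\ra \in \bbH \mid p \in D\}.
\]
This set belongs to $M$ by Separation applied inside $\cM$. I claim $\cM \vDash ``E\text{ is dense in }\bbH$'': given any $\la p_0,q_0\ra\in\bbH$, density of $D$ in $\bbP$ produces $p\in D$ with $p$ extending $p_0$, and then $\la p,q_0\ra\in E$ extends $\la p_0,q_0\ra$ in the sense of $\bbH$. By $\cM$-genericity of $\cG$ we obtain $\la p,q\ra\in\cG\cap E$, and then $p\in D\cap \cU$, as required.

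I do not anticipate a genuine obstacle: both the filter clauses and the genericity reduction are standard projections from a product-like forcing to its first factor. The only point requiring care is to verify, from the definition of extension in $\bbH$, that keeping $q$ fixed while shrinking $p$ really yields an extension; this is immediate because the ``for almost all $i$'' condition on $q$ is trivially satisfied when $q'=q$, and because passing to a subset of $p$ can only shrink the relevant index set.
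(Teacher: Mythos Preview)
Your argument is correct and matches the paper's approach: the paper's one-line proof simply observes that if $D$ is dense in $\bbP$ then $\{\la p,q\ra \mid p \in D \wedge q \in \bbQ\}$ is dense in $\bbH$, leaving the filter axioms implicit. One minor correction: since $\bbQ$ (and hence $\bbH$) is a proper class, your $E$ is not a set in $M$ obtained by Separation but a definable class in $\cM$; genericity for $\bbH$ is stated with respect to such definable dense classes, so the argument goes through unchanged.
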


\begin{proof}
In $\cM$: if $D$ is dense in $\bbP$, then $\{ \la p, q\ra  \,\mid\, p \in D \,\wedge\, q \in \bbQ\}$ is dense in $\bbH$.
\end{proof}

We now define the \emph{extended ultrapower of $\cM$ by $\cU$}; we follow closely the presentation in Spector~\cite{Spr}.

Let $\Omega = \{ m \in M \,\mid\, \cM  \vDash ``m \in \bbN \text{''}\}$. We define binary relations $=^{\ast}$ and $\in^{\ast}$ on $\Omega$ as follows:\\
$m =^{\ast} n$ iff there exists $\la p, q\ra \in \cG $ such that $ \rank  q = k > m,n $ and 
$\la p, q\ra \Vdash \dot{G}_m =\dot{G}_n$;\\
$m \in^{\ast} n$ iff there exists $\la p, q\ra \in \cG $ such that $ \rank  q = k > m,n $ and 
$\la p, q\ra \Vdash \dot{G}_m \in\dot{G}_n$.

It is easily seen from the definition of forcing and Proposition~\ref{Los} that $ =^{\ast} $ is an equivalence relation on $\Omega$, and a congruence relation with respect to $\in^{\ast}$. We denote the equivalence class of $m \in \Omega$ in the  relation  $ =^{\ast} $ by $G_m$, define $G_m \in^{\ast} G_n$ iff $m \in^{\ast} n$,
and let $N = \{ G_m \,\mid\, m \in \Omega\}$.
The \emph{extended ultrapower of $\cM$ by $\cU$} is the structure $\fN = (N, \in^{\ast})$.

There is a natural embedding $j$ of $\cM $ into $\fN$ defined as follows:  By Lemma~\ref{fixf} 
for every $z \in M$ there exist $\la p, q\ra \in \cG $ and $ m < \rank q  $ such that 
$\la p, q\ra  \Vdash z =\dot{G}_m$.
We let $j(z) = G_m$ and often identify $j(z)$ with~$z$.
It is easy to see  that the definition is independent of the choice of representative from  $G_m$, and that $j$ is an embedding of $\cM$  into $\fN$.

\begin{proposition}\label{fteu}
(The Fundamental Theorem of Extended Ultrapowers)
Let $ \phi(v_1,\ldots,v_s )$ be an $\in$-formula with parameters from $M$. \\
If  $G_{n_1},\ldots,G_{n_s} \in N$, then  the following statements are equivalent:
\begin{enumerate}
\item
$\fN \vDash \phi(G_{n_1},\ldots,G_{n_s} )$.
\item
There is some $\la p, q \ra \in \cG$ such that $\la p, q \ra  \Vdash \phi( \dot{G}_{n_1},\ldots,\dot{G}_{n_s} )$
holds in $\cM$.
\item 
There exists some $\la p, q \ra \in \cG$ with $\rank q = k > n_1,\ldots, n_s$ such that  
$\cM \vDash \forall i \in p \, \forall \la x_0,\ldots, x_{k-1}\ra \in q(i)\; \phi( x_{n_1},\ldots, x_{n_s}).$
\end{enumerate}
\end{proposition}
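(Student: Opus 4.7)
The plan is to prove (1) $\Leftrightarrow$ (2) by induction on the logical complexity of $\phi$ (the standard truth lemma for forcing), while handling the passage (2) $\Leftrightarrow$ (3) as a separate density argument built on top of Proposition~\ref{Los}.

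For (2) $\Leftrightarrow$ (3): the direction (3) $\Rightarrow$ (2) is immediate from Proposition~\ref{Los}. For (2) $\Rightarrow$ (3), I would define in $\cM$ the class $F_\phi$ of all $\la p, q\ra \in \bbH$ that either verify the literal ``$\forall i \in p$'' conjunct appearing in (3) or force $\neg\phi$, and show $F_\phi$ is dense in $\bbH$: given any $\la p_0, q_0\ra$, Lemma~\ref{basic1}(3) yields some $\la p_1, q_1\ra$ extending it and deciding $\phi$; if it forces $\neg\phi$ we are done, while if it forces $\phi$ then Proposition~\ref{Los} provides a bounded $c$ so that $\la p_1 \setminus c, q_1\ra$ is a further extension belonging to $F_\phi$. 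Genericity of $\cG$ then produces $\la p^*, q^*\ra \in \cG \cap F_\phi$; under the hypothesis (2) the filter property combined with Lemma~\ref{basic1}(2) rules out the alternative $\la p^*, q^*\ra \Vdash \neg\phi$ (else a common extension in $\cG$ would force both $\phi$ and $\neg\phi$), so $\la p^*, q^*\ra$ must satisfy the ``for all'' version and witness (3).

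The induction on complexity for (1) $\Leftrightarrow$ (2) follows the standard truth-lemma pattern. Atomic formulas are immediate from the definitions of $=^{\ast}$ and $\in^{\ast}$ together with Definition~\ref{forcing}(1)--(7), using Lemma~\ref{fixf} to represent constants $\check z$ as some $\dot G_m$. Conjunction is handled by taking a common extension of two witnesses inside $\cG$. For negation, the forward direction applies genericity to the dense class of conditions deciding $\phi$ (density given by Lemma~\ref{basic1}(3)); by the inductive hypothesis and $\fN \vDash \neg\phi$, no condition in $\cG$ forces $\phi$, so the deciding witness must force $\neg\phi$; the converse uses Lemma~\ref{basic1}(2) together with filter compatibility. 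For $\exists v\, \psi$, the forward direction picks $G_m \in N$ with $\fN \vDash \psi(G_m)$, uses the inductive hypothesis to obtain $\la p, q\ra \in \cG$ forcing $\psi(\dot G_m)$, and notes this already forces $\exists v\, \psi$ by Definition~\ref{forcing}(10); the converse uses the ``dense below'' clause of Definition~\ref{forcing}(10) together with genericity to find $\la p^*, q^*\ra \in \cG$ and some $m$ with $\la p^*, q^*\ra \Vdash \psi(\dot G_m)$, whence the inductive hypothesis yields $\fN \vDash \psi(G_m)$.

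The main obstacle is the negation step in the induction, where one must delicately combine filter compatibility and consistency to ensure that the generically-supplied decision-witness for $\phi$ forces the ``right'' alternative. A secondary subtlety is that $\bbH$ is a proper class in $\cM$, so $F_\phi$, the decision classes, and the ``dense below'' classes for existentials are $\cM$-definable classes rather than sets; this is harmless because only countably many such classes arise in the proof, and $\cG$ is constructed externally to meet all of them, analogously to the construction of the generic $\cU$ on $\bbP$ sketched above.
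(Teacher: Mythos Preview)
Your proposal is correct and follows essentially the same approach as the paper: the equivalence (1)$\Leftrightarrow$(2) is the standard truth lemma proved by induction on complexity, and (2)$\Leftrightarrow$(3) comes from Proposition~\ref{Los}. The only minor difference is that for (2)$\Rightarrow$(3) the paper argues more directly: given $\la p,q\ra\in\cG$ forcing $\phi$, Proposition~\ref{Los} already supplies a bounded $d$ with $\forall i\in p\setminus d\,\forall\la x_0,\ldots,x_{k-1}\ra\in q(i)\;\phi(\ldots)$, and one observes that $\la p\setminus d,\,q\ra$ itself lies in $\cG$ (the conditions below $\la p,q\ra$ with first coordinate contained in $p\setminus d$ are dense below $\la p,q\ra$, and upward closure of the filter finishes it), so there is no need for your auxiliary class $F_\phi$.
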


\begin{proof}
Statement (3) is just a reformulation of  (2) using Proposition~\ref{Los} plus the fact that if $d$ is bounded, then $\la p, q \ra \in \cG$ implies $\la (p \setminus d), q\ra \in \cG$. (Observe that $D =\{ \la p', q' \ra \,\mid\,  \la p', q' \ra \le  \la p, q \ra
\,\wedge\, p' \le (p \setminus d)\}$ is dense in $\la p, q\ra$; for the definition of ``dense in'' see the sentence preceding Lemma~\ref{forall}.)

The equivalence of  (1) and (2) is the Forcing Theorem. It is proved as usual,  by induction on the logical complexity of $\phi$.
The cases $v_1 = v_2$ and $v_1 \in v_2$ follow immediately from the definitions of $=^{\ast}$ and $\in^{\ast}$, and the conjunction is immediate from (3) in the definition of a filter. 

We consider next the case where $\phi$ is of the form $\neg \psi$.
First assume that $\fN \vDash \neg\, \psi (G_{n_1},\ldots,G_{n_s} )$.
Lemma~\ref{basic1}~(3), implies that there exists $\la p, q \ra \in \cG$ such that either $\la p, q \ra  \Vdash  \psi(\dot{G}_{n_1},\ldots,\dot{G}_{n_s}) $ or $\la p, q \ra  \Vdash \neg  \psi(\dot{G}_{n_1},\ldots,\dot{G}_{n_s})$.
In the first case $\fN \vDash \psi(G_{n_1},\ldots,G_{n_s} )$ by the  inductive assumption; a contradiction.
Hence $\la p, q \ra  \Vdash \neg  \psi(\dot{G}_{n_1},\ldots,\dot{G}_{n_s})$.

Assume that 
$\fN \nvDash \neg\, \psi (G_{n_1},\ldots,G_{n_s} )$; then $\fN \vDash  \psi(G_{n_1},\ldots,G_{n_s} )$ and the inductive assumption yields
$\la p', q' \ra \in \cG$ such that $\la p', q' \ra  \Vdash  \psi(G_{n_1},\ldots,\dot{G}_{n_s}) $.
There can thus be no $\la p, q \ra \in \cG$ such that $\la p, q \ra  \Vdash  \neg \psi(\dot{G}_{n_1},\ldots,\dot{G}_{n_s}) $.

Finally, we assume that $\phi(u_1,\ldots,v_s)$ is of the form $\exists w \, \psi(u_1,\ldots,v_s, w)$.
If $\fN \vDash \phi(G_{n_1},\ldots,G_{n_s})$ then  $\fN \vDash \psi(G_{n_1},\ldots,G_{n_s}, G_m )$
for some $m \in \Omega$. By the inductive assumption there is some $\la p, q \ra \in \cG $ such that $\la p, q \ra \Vdash \psi(\dot{G}_{n_1},\ldots,\dot{G}_{n_s}, \dot{G}_m )$ and hence, by the definition of forcing,  $\la p, q \ra  \Vdash \exists w\,\psi(\dot{G}_{n_1},\ldots,\dot{G}_{n_s},w)$.

Conversely, if $\la p, q \ra \in \cG $ and $\la p, q \ra  \Vdash \exists w\,\psi(\dot{G}_{n_1},\ldots,\dot{G}_{n_s},w )$, then by the definition of forcing there are $\la p', q' \ra \in \cG $ and $m \in \Omega$ such that 
$\la p', q' \ra  \Vdash \psi(\dot{G}_{n_1},\ldots,\dot{G}_{n_s}, \dot{G}_m )$. By the inductive assumption, 
$\fN \vDash \psi(G_{n_1},\ldots,G_{n_s}, G_m )$ holds, and hence $\fN \vDash \phi(G_{n_1},\ldots,G_{n_s} )$.
\end{proof}

\begin{corollary}\label{elememb}
The embedding $j$ is an elementary embedding of $\cM$ into $\fN$.
\end{corollary}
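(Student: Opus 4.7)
The plan is to deduce elementarity directly from the Fundamental Theorem of Extended Ultrapowers (Proposition~\ref{fteu}) together with the \L o\'{s}-style Proposition~\ref{Los}. Fix an $\in$-formula $\phi(v_1,\ldots,v_s)$ and elements $z_1,\ldots,z_s \in M$, and choose $m_1,\ldots,m_s \in \Omega$ so that $j(z_j) = G_{m_j}$. By the definition of $j$, for each $j$ there exists some condition $\la p_j, q_j\ra \in \cG$ with $\la p_j, q_j\ra \Vdash z_j = \dot{G}_{m_j}$ (in the sense of $\cM$). The goal is to show
\[
\cM \vDash \phi(z_1,\ldots,z_s) \iff \fN \vDash \phi(j(z_1),\ldots,j(z_s)).
\]

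The first step is to produce one condition in $\cG$ that simultaneously witnesses all the identifications $z_j = \dot{G}_{m_j}$. Using filter property (3) iteratively, pick $\la p, q\ra \in \cG$ extending each $\la p_j, q_j\ra$, and if necessary pad using Lemma~\ref{fixf} so that $k := \rank q > m_1,\ldots,m_s$. By Lemma~\ref{basic1}(1) the condition $\la p, q\ra$ forces every equality $z_j = \dot{G}_{m_j}$, so Proposition~\ref{Los} gives a bounded $d$ (in $\cM$) such that for all $i \in p \setminus d$ and all $\la x_0,\ldots,x_{k-1}\ra \in q(i)$, one has $x_{m_j} = z_j$ for $j = 1,\ldots,s$.

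The elementarity now follows by chaining equivalences. On the almost-all set just identified, $\phi(x_{m_1},\ldots,x_{m_s})$ is literally the same statement as the parameter-free sentence $\phi(z_1,\ldots,z_s)$, hence $\cM \vDash \phi(z_1,\ldots,z_s)$ is equivalent to
\[
\cM \vDash \forall^{\infy} i \in p\, \forall \la x_0,\ldots,x_{k-1}\ra \in q(i)\; \phi(x_{m_1},\ldots,x_{m_s}),
\]
which by Proposition~\ref{Los} is equivalent to $\la p, q\ra \Vdash \phi(\dot{G}_{m_1},\ldots,\dot{G}_{m_s})$, which by (3)$\Leftrightarrow$(1) of Proposition~\ref{fteu} (and $\la p, q\ra \in \cG$) is equivalent to $\fN \vDash \phi(G_{m_1},\ldots,G_{m_s}) = \phi(j(z_1),\ldots,j(z_s))$. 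For the converse implication (from $\fN$ to $\cM$), if some $\la p', q'\ra \in \cG$ forces $\phi$, one simply replaces $\la p, q\ra$ above by a common refinement in $\cG$ of $\la p, q\ra$ and $\la p', q'\ra$ before applying Proposition~\ref{Los}.

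The only real subtlety is the bookkeeping of a single condition in $\cG$ that simultaneously forces the formula $\phi(\dot{G}_{m_1},\ldots,\dot{G}_{m_s})$ and the equalities $z_j = \dot{G}_{m_j}$; once that common refinement has been extracted, the equivalence is purely a matter of substituting $z_j$ for $x_{m_j}$ on the \L o\'{s} almost-all set, and appealing to Propositions~\ref{Los} and~\ref{fteu}. Independence of $j(z)$ from the choice of representative $m$ has already been noted in the discussion preceding the corollary, so no further work is needed on that front.
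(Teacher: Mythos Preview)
Your argument is correct and is essentially the explicit unpacking of what the paper leaves implicit: in the paper this corollary is stated without proof, as an immediate consequence of Proposition~\ref{fteu} (together with Proposition~\ref{Los}), and your write-up simply spells out the bookkeeping of choosing representatives $G_{m_j}$ for the $j(z_j)$ and passing to a common refinement in $\cG$. One small stylistic point: the phrase ``which by (3)$\Leftrightarrow$(1) of Proposition~\ref{fteu} \ldots\ is equivalent to $\fN \vDash \phi$'' overstates the biconditional at the level of the fixed condition $\la p,q\ra$, since (1)$\Rightarrow$(2) only yields \emph{some} condition in $\cG$; you correctly handle this in the next sentence via a further common refinement, so the logic is fine, but you may want to rephrase to avoid the misleading ``equivalent''.
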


\begin{corollary}\label{elemZF}
The structure $\fN$ satisfies $\ZF$.
\end{corollary}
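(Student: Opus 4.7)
The plan is to derive Corollary~\ref{elemZF} as an immediate consequence of Corollary~\ref{elememb}. Since $j \colon \cM \to \fN$ is elementary, for every $\in$-sentence $\theta$ we have $\cM \vDash \theta$ iff $\fN \vDash \theta$. Each axiom of $\ZF$ is an $\in$-sentence (or, in the case of the Separation and Replacement schemas, each instance is an $\in$-sentence with no free variables once the parameters are universally quantified at the front of the schema). Since $\cM \vDash \ZF$ by hypothesis, every such sentence holds in $\cM$, and elementarity transports it to $\fN$; therefore $\fN \vDash \ZF$.

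Alternatively, one can bypass Corollary~\ref{elememb} and invoke Proposition~\ref{fteu} directly: for a parameter-free $\in$-sentence $\theta$, clause (3) of Proposition~\ref{fteu} reduces to the assertion that some $\la p,q\ra \in \cG$ satisfies $\cM \vDash \forall i \in p\;\forall \la x_0,\ldots,x_{k-1}\ra \in q(i)\;\theta$. Since $\theta$ has no free variables, this quantifier prefix is vacuous whenever $p \ne \emptyset$ and each $q(i) \ne \emptyset$, which holds for every condition $\la p,q\ra \in \bbH$. So $\fN \vDash \theta$ whenever $\cM \vDash \theta$, which covers every axiom and axiom-instance of $\ZF$.

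There is no real obstacle here: the work has already been done in establishing the Fundamental Theorem of Extended Ultrapowers and its corollary on elementarity. The only point to double-check is that the axiom schemas of Separation and Replacement, viewed instance-by-instance, are genuine $\in$-sentences — which they are as soon as one closes each instance under universal quantification of its parameters — so that elementarity applies uniformly to all of them.
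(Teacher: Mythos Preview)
Your proposal is correct and follows exactly the route the paper intends: Corollary~\ref{elemZF} is stated immediately after Corollary~\ref{elememb} with no separate proof, so the reader is meant to infer it from elementarity of $j$ together with $\cM \vDash \ZF$, just as you do. Your alternative argument via Proposition~\ref{fteu} is also fine and amounts to the same thing.
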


\begin{proposition}\label{ext}
The structure $\widehat{\fN} = (N, \in^{\ast} , M)$ satisfies the principles of Transfer, Nontriviality and Standard Part.
\end{proposition}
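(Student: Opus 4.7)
The plan is to verify each of the three principles in $\widehat{\fN}$ separately, using the forcing machinery of Subsection~\ref{SS1} together with the Fundamental Theorem of Extended Ultrapowers (Proposition~\ref{fteu}).

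\emph{Transfer} is immediate from Corollary~\ref{elememb}. If $\widehat{\fN} \vDash \forall^{\st} x\, \phi(x, \bar z)$ for an $\in$-formula $\phi$ with parameters $\bar z \in M$, then, since the standard elements of $\widehat{\fN}$ are precisely $j(M)$, elementarity of $j$ gives $\cM \vDash \forall x\, \phi(x, \bar z)$; applying elementarity once more yields $\fN \vDash \forall x\, \phi(x, j(\bar z))$, which is the conclusion of Transfer in $\widehat{\fN}$.

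\emph{Nontriviality} is obtained by producing a diagonal nonstandard natural number via genericity. Given any condition $\la p, q\ra$ of rank $k$, one can extend it to $\la p, q'\ra$ of rank $k+1$ with
$$ q'(i) = \{\la x_0,\ldots,x_{k-1}, i\ra \,\mid\, \la x_0,\ldots,x_{k-1}\ra \in q(i)\}, $$
which by Proposition~\ref{Los} forces $\dot{G}_k \in \bbN$ and $\dot{G}_k$ to be ``the diagonal''. Hence the class $D$ of conditions for which some coordinate $m < \rank q$ is identically equal to $i$ on $q(i)$ for $i \in p$ is dense in $\bbH$ and definable in $\cM$, so by $\cM$-genericity some $\la p, q\ra \in \cG$ lies in $D$ with associated index $m$. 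By Proposition~\ref{fteu}, $G_m \in \bbN^{\fN}$. For each $n \in \bbN^{\cM}$ the class $E_n = \{\la p', q'\ra \le \la p, q\ra \,\mid\, n \notin p'\}$ is dense below $\la p, q\ra$ (replace $p'$ by $p' \setminus \{n\}$) and definable in $\cM$, so $\cG$ meets $E_n$. The diagonal property is inherited through extension, so any such $\la p', q'\ra$ forces $\dot{G}_m \ne \check n$; Proposition~\ref{fteu} then gives $\fN \vDash G_m \ne j(n)$, and $\nu := G_m$ witnesses Nontriviality.

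\emph{Standard Part} is the most delicate of the three and relies crucially on Lemma~\ref{decideN}. Given $A \in N$ with $\widehat{\fN} \vDash A \subseteq \bbN$, write $A = G_m$, and use Proposition~\ref{fteu} to pick $\la p, q\ra \in \cG$ forcing $\dot{G}_m \subseteq \bbN$. Lemma~\ref{decideN} implies that the subclass of extensions of $\la p, q\ra$ deciding $\check n \in \dot{G}_m$ for \emph{every} $n \in \bbN^{\cM}$ is dense below $\la p, q\ra$ and definable in $\cM$, so genericity supplies $\la p', q'\ra \in \cG$ with this property. Set
$$ B = \{\, n \in \bbN^{\cM} \,\mid\, \la p', q'\ra \Vdash \check n \in \dot{G}_m\,\}, $$
which is a set in $M$ because the forcing relation for atomic formulas is $\cM$-definable (Proposition~\ref{Los}). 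For each $n \in \bbN^{\cM}$: if $n \in B$ then Proposition~\ref{fteu} gives $\fN \vDash j(n) \in G_m$; if $n \notin B$ then the deciding property forces $\check n \notin \dot{G}_m$, so $\fN \vDash j(n) \notin G_m$. Elementarity of $j$ then identifies $j(B)$ as the required standard set.

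The principal obstacle in both Nontriviality and Standard Part is the tension between the countable, external construction of $\cG$ and the \emph{uniformity} demanded by each axiom: Nontriviality asks for a single $\nu$ avoiding every $j(n)$, and Standard Part asks for a single $B$ correctly capturing $A$ on all standard integers. The diagonal construction resolves the first; Lemma~\ref{decideN}, which compresses $\bbN^{\cM}$-many decisions into one condition, is the essential tool for the second. A minor auxiliary point, used in both cases, is the standard equivalence between $\cM$-genericity and the property that $\cG$ meets every $\cM$-definable subclass that is dense below a condition already in $\cG$.
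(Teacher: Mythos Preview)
Your proof is correct and follows essentially the same route as the paper: Transfer via Corollary~\ref{elememb}, Nontriviality via the diagonal extension $q'(i)=\{\la x_0,\ldots,x_{k-1},i\ra\mid\la x_0,\ldots,x_{k-1}\ra\in q(i)\}$, and Standard Part via Lemma~\ref{decideN} together with the $\cM$-definability of the forcing relation. The only noteworthy difference is that your Nontriviality argument is slightly more elaborate than needed: once a diagonal condition $\la p,q'\ra$ lies in $\cG$, the paper observes directly that $\la p,q'\ra\Vdash\dot G_k\ne\check n$ for every $n$ (since $p$ is unbounded, $\{i\in p\mid i=n\}$ is bounded, so $x_k=i\ne n$ for almost all $i\in p$), making the separate density arguments for the $E_n$ unnecessary.
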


\begin{proof} 
Transfer is Corollary~\ref{elememb}. 

Working in $\cM$, 
for every $\la p, q \ra$ with $\rank q =k$ define $q'$ of rank $k+1$ by
$q'(i) = \{ \la x_0,\ldots, x_{k-1}, x_{k}\ra  \,\mid\, \la x_0,\ldots, x_{k-1}\ra \in q(i) 
 \,\wedge\, x_k = i\}$ and note that $\la p, q' \ra$ extends $\la p, q \ra$.
By $\cM$-genericity of $\cG$ some such $\la p, q'\ra$ belongs to $\cG$.
It is easily seen from the Fundamental Theorem that  $G_k$ is an integer in $\fN $ and that $\fN \vDash ``G_k\neq n$'' for all $n \in \Omega$.
Hence $\mathbf{O}$ holds in  $\widehat{\fN}$.

It remains to prove the Standard Part principle.  
 Let $G_m \in N$. By Lemma~\ref{decideN} there is $\la p, q \ra \in \cG$ which decides  $n \in \dot{G}_m$ for all $n\in \Omega$. 
The set $E =\{ n \in \Omega \,\mid\, \la p, q\ra \Vdash n\in \dot{G}_m\}$ is definable in $\cM$, hence there is $e \in M$  such that  $\cM \vDash ``n \in e $'' iff $ n \in E$ iff $\fN \vDash ``n \in G_m$''. Thus
$ \widehat{\fN}  \vDash ``\st(e) \,\wedge\, \forall^{\st} \nu \in \bbN\,(\nu \in e \eqi \nu \in G_m)$''.
\end{proof}
\emph{This  proves Proposition~\ref{countablemodels}, and hence   Theorem~\textbf{A}.}
 \qed


\section{Conservativity of $\SCOT$ over $\ZFc$}
\label{stand}

In this section we show that if $\ZF$ is replaced by $\ZFc$, the Standard Part principle  can be strengthened to Countable $\st$-$\in$-Choice.

We recall that $\ZFc$ implies $\ACC$; this provides enough choice to prove that the ordinary ultrapower of a countable model $\cM$ of  $\ZFc$ by an $\cM$-generic filter $\cU$ on $\bbP$ satisfies \L o\'{s}'s Theorem and thus yields an elementary extension of $\cM$. 
A proof that every countable model $\cM = (M, \in^{\cM})$ of $\ZFc$ has an extension to a model of $\SPOT + \SC$ in which $M$ is the class of all standard sets can be obtained by a straightforward adaptation of the arguments in Enayat's paper~\cite{E}, in particular, of the proofs of Theorems B and C there. Essentially, all one has to do is to replace countable models of second-order arithmetic by countable models of $\ZFc$. We followed this approach in an early version of the present paper.

Another proof of conservativity of $\SCOT$ over $\ZFc$ was suggested to us by Kanovei in a private communication. 
Its basic idea is to use forcing to add to $\cM$ a mapping of $\omega_1$ onto $\bbR$ without adding any reals. In the resulting generic extension there are nonprincipal  ultrafilters over  $\bbN$, and one can take an ultrapower of $\cM$ by one of them to obtain an extension that satisfies $\SCOT$. 
However, this method does not seem adequate for handling Idealization over uncountable sets in Section~\ref{idealization}.

We first outline the simplification of the forcing that is possible in the presence of $\ACC$, and then use it to prove that, assuming $\cM$ is a countable model of $\ZFc$, the structure $\widehat{\fN}$ from Proposition~\ref{ext} satisfies also $\CC$ and $\SN$. The proof can be viewed as a warm-up for similar but more complex arguments of the following sections.

We work in $\ZFc$. Given $I = \bbN $ and $q \in \bbQ$ of rank $k$, $\ACC$ guarantees the  existence of a function $f$  such that $\forall i \in I \,  (f(i)\in q(i))$; 
let $\widehat{q}(i) = \{f(i)\}$ where $f(i) = \la f_0(i),\ldots, f_{k-1}(i) \ra$.
For any $\la p, q \ra \in \bbH$ the condition $\la p, \widehat{q}\,\ra $ extends $\la p, q \ra$.
We could replace $\bbQ$ by 
$\widetilde{\bbQ} = \{ \widetilde{q} \in \bbF \, \mid\, 
\exists k \, \forall i \in I \, (\widetilde{q}(i) \in \bbV^{k})\}.$ 
But there is  no need at all for the symbols $\dot{G}_k$, $k \in \bbN$, and Spector's component $\bbQ$ of the forcing notion $\bbH$, if our forcing language allows names for all $f \in \bbF$ (see below for details).

On the other hand, Standardization and Countable $\st$-$\in$-Choice, unlike Transfer and Idealization, deal with $\st$-$\in$-formulas, so we  need  to extend our definition of the forcing relation to such formulas. 

The forcing notion we use in this section is $\bbP$.
The forcing language $\widetilde{\fL}$ has a constant symbol $\check{f}$ for every function $f \in \bbF$ (the check is usually suppressed). 
   Forcing is  defined for arbitrary $\st$-$\in$-formulas. 
Only the following clauses in the definiton of the forcing relation are necessary:  

\begin{definition}\label{forcing2}
(Simplified forcing.)

(1') $p  \Vdash f_{1} = f_{2}$  iff
$\forall^{\infy} i \in p \, (f_{1}(i)= f_{2}(i)).$
 
(2') $p  \Vdash f_{1} \in f_{2}$  iff
$\forall^{\infy} i \in p \,( f_{1}(i)\in f_{2}(i)).$
 
(8) $p  \Vdash \neg \phi $ iff  
there is no $p' $ extending $p$  such that 
$p' \Vdash \phi.$

(9) $p \Vdash (\phi \,\wedge\, \psi) $ iff 
$p  \Vdash \phi$ and $p \Vdash \psi $.

(10') $p \Vdash \exists v\, \psi $ iff 
for every $p'$ extending $p$ there exist $p'' $ extending $p'$ and a function $f\in \bbF$ such that 
$p'' \Vdash \psi(f).$

(11) \\$\la p, q \ra  \Vdash \st( f)$ iff 
$\exists x\, \forall^{\infy} i\in p \, (f(i) = x )$ iff
$\forall^{\infy} i, i'\in p \,( f(i) = f(i') )$.
\end{definition}

The basic properties of forcing from Lemma~\ref{basic1} remain valid, but Proposition~\ref{Los} (``\L o\'{s}'s Theorem'') of course holds only for $\in$-formulas, in the form \quad 
$p \Vdash \phi(f_1,\ldots, f_r )\text{ iff }\forall^{\infy} i \in p \;
\phi(f_{1}(i),\ldots, f_{_r}(i)).$

We now take Zermelo-Fraenkel set theory as our metatheory. Let $\cM$ be  a countable model of $\ZFc$, $\cU$ an $\cM$-generic filter on $\bbP \in M$, and $\fN = (N, \in^{\ast})$ the ultrapower of $\cM$ by $\cU$.  If  $\cM \vDash``f \in \bbF$'', then $[f]_{\cU}$ is the equivalence class of $f$ modulo $\cU$.
We identify $x \in M$ with $[c_x]_{\cU}$ where  $c_x$ is the constant function on $I$  with value $x$ in the sense of $\cM$, and let $\widehat{\fN} =  (N, \in^{\ast}, M)$.

Proposition~\ref{fteu} takes the following form.
\begin{proposition}\label{fteu2}
Let $\phi(u_1,\ldots,u_r)$ be an $\st$-$\in$-formula with parameters from  $M$. 
If $\cM \vDash``f_1,\ldots, f_r \in \bbF$'', then  the following statements are equivalent:
\begin{enumerate}
\item
$\widehat{\fN} \vDash \phi([f_1]_{\cU},\ldots,[ f_r]_{\cU})$.
\item
There is some $p \in \cU$ such that $p \Vdash \phi(f_1,\ldots, f_r )$ holds in $\cM$.
\end{enumerate}
\end{proposition}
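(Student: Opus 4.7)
The plan is to prove Proposition~\ref{fteu2} by meta-induction on the logical complexity of the $\st$-$\in$-formula $\phi$, imitating the proof of Proposition~\ref{fteu} but using the simplifications afforded by $\ACC$ (via the replacement of $\bbH$-conditions by mere elements $p \in \bbP$) and accommodating the new atomic case $\st(v)$.

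Before the induction, I would verify that the basic forcing lemmas carry over to the simplified setup: (i) if $p \Vdash \phi$ and $p' \le p$, then $p' \Vdash \phi$; (ii) no $p$ forces both $\phi$ and $\neg \phi$; and (iii) the class of conditions that decide $\phi$ (i.e., force $\phi$ or force $\neg \phi$) is dense in $\bbP$. Claim (iii), combined with $\cM$-genericity of $\cU$, gives the crucial fact that for every $\phi$ with named parameters in $\cM$, some $p \in \cU$ decides $\phi$.

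For the atomic cases, clauses (1') and (2') are exactly the definitions of $=^{\ast}$ and $\in^{\ast}$ on the ultrapower, so equivalence of (1) and (2) is immediate there; for $\st(f)$, note that $\widehat{\fN} \vDash \st([f]_{\cU})$ means $[f]_{\cU} = j(x) = [c_x]_{\cU}$ for some $x \in M$, which by clause (1') on $f = c_x$ is equivalent to the existence of $p \in \cU$ with $\forall^{\infy} i \in p\,(f(i) = x)$, i.e., to $p \Vdash \st(f)$ by clause (11). The conjunction case is immediate from the fact that $\cU$ is closed under meets. For the negation case, assume $\widehat{\fN} \vDash \neg \phi$; by density (property (iii)) some $p \in \cU$ decides $\phi$; if $p \Vdash \phi$ then the inductive hypothesis gives $\widehat{\fN} \vDash \phi$, a contradiction, so $p \Vdash \neg \phi$. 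The converse uses property (ii) together with the inductive hypothesis.

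For the existential case $\phi \equiv \exists v\, \psi(v)$: if $\widehat{\fN} \vDash \exists v\, \psi(v)$, pick $f \in \bbF^{\cM}$ with $\widehat{\fN} \vDash \psi([f]_{\cU})$; by IH some $p \in \cU$ forces $\psi(f)$, hence forces $\exists v\, \psi(v)$ by clause (10'). Conversely, suppose $p \in \cU$ and $p \Vdash \exists v\, \psi(v)$. In $\cM$, the set $D = \{p'' \le p \mid \exists f \in \bbF\ (p'' \Vdash \psi(f))\}$ is dense below $p$ by (10'), so it extends to a set dense in $\bbP$ (adjoining all $p'$ incompatible with $p$); by $\cM$-genericity some $p'' \in \cU \cap D$ exists, witnessed by some $f \in \bbF^{\cM}$ with $p'' \Vdash \psi(f)$, and the inductive hypothesis yields $\widehat{\fN} \vDash \psi([f]_{\cU})$, hence $\widehat{\fN} \vDash \exists v\, \psi(v)$. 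The main subtlety I expect is purely bookkeeping: verifying that clause (10') together with density really does produce genuine witnesses from $\bbF^{\cM}$ (as opposed to externally defined functions), which is ensured because all dense sets consulted are definable in $\cM$ and $f$ is named in $\widetilde{\fL}$ by its $\cM$-name $\check{f}$.
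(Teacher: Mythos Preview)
Your proposal is correct and follows essentially the approach the paper intends: the paper does not write out a separate proof of Proposition~\ref{fteu2} but simply presents it as the form Proposition~\ref{fteu} takes in the simplified $\bbP$-setting, leaving the verification (including the new atomic case $\st(f)$ via clause~(11)) to the reader. Your induction on complexity, with the density argument for $\exists$ and the decision-by-genericity argument for $\neg$, is exactly that verification spelled out.
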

Corollaries~\ref{elememb} and~\ref{elemZF} and Proposition~\ref{ext} remain valid in this modified setting.
For $\in$-formulas Proposition~\ref{fteu2}  is just a fancy way to state the ordinary \L o\'{s}'s Theorem, but 
for $\st$-$\in$-formulas it provides a useful handle on the behavior of $\widehat{\fN}$. 

We need the following corollary (which can also be proved more tediously  directly from the definition of the forcing relation). 
The statement  ``$D\subseteq \bbP$ is \emph{dense in} $p$'' means that $\forall p'' \le p\, \exists p'\le p''\, (p' \in D)$.

\begin{lemma}\label{forall}
If $p \Vdash \forall^{\st} m \in \bbN \,\phi(m)$, then the set
$D_m = \{ p' \in \bbP \,\mid\, p' \Vdash \phi (m) \}$ is dense in $p$  for every $m \in \bbN$.
\end{lemma}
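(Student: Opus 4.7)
The plan is a direct contrapositive argument, grinding through the definitions of the forcing relation on the primitive connectives $\neg, \wedge, \exists$ that $\forall^{\st} m \in \bbN\,\phi(m)$ abbreviates, namely $\neg\exists m\,\bigl(\st(m) \wedge m \in \bbN \wedge \neg\phi(m)\bigr)$.

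Fix $m \in \bbN$ and $p'' \leq p$; I want some $p' \leq p''$ with $p' \in D_m$. Suppose toward contradiction that no such $p'$ exists, i.e.\ no $p' \leq p''$ forces $\phi(m)$. Let $c_m \in \bbF$ be the constant function with value $m$ (in the sense of $\cM$). By definition of forcing for negation (clause (8)), the failure of $\phi(m)$ to be forced below $p''$ says exactly that $p'' \Vdash \neg\phi(c_m)$. Moreover $p'' \Vdash \st(c_m)$ by clause (11) (a constant function is standard), and $p'' \Vdash c_m \in \bbN$ by \L o\'s's Theorem, since $c_m(i) = m \in \bbN$ for every $i$.

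Next I show $p'' \Vdash \exists m\,(\st(m) \wedge m \in \bbN \wedge \neg\phi(m))$. Using clause (10'): given any $p''' \leq p''$, take $p'''' = p'''$ and $f = c_m$; the preceding paragraph (with $p'''$ in place of $p''$, which is legitimate since forcing is preserved under extension by Lemma~\ref{basic1}(1) and the assumption transfers to $p'''$ as well) shows $p''' \Vdash \st(c_m) \wedge c_m \in \bbN \wedge \neg\phi(c_m)$. This verifies the density clause in (10'), so the existential is forced.

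But $p'' \leq p$ and $p \Vdash \forall^{\st} m \in \bbN\,\phi(m)$, i.e.\ $p \Vdash \neg\exists m\,(\st(m) \wedge m \in \bbN \wedge \neg\phi(m))$, so by Lemma~\ref{basic1}(1) the same holds for $p''$. By clause (8), no extension of $p''$ can force the existential in question; in particular $p''$ itself cannot, contradicting what we just derived. Hence some $p' \leq p''$ forces $\phi(m)$, proving density of $D_m$ in $p$. No real obstacle here beyond a careful unfolding of quantifier abbreviations; the only subtle point is recognizing that to witness the existential one must exhibit a dense set of extensions forcing the matrix with \emph{some} name, and the constant name $c_m$ works uniformly on every extension of $p''$.
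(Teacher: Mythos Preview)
Your proof is correct. You take the direct syntactic route, unwinding $\forall^{\st} m \in \bbN\,\phi(m)$ as $\neg\exists v\,(\st(v)\wedge v\in\bbN\wedge\neg\phi(v))$ and showing that a failure of density at $p''$ lets $p''$ force this existential via the constant name $c_m$, contradicting the forced negation inherited from $p$.

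The paper proceeds differently: it argues semantically, passing through a generic filter $\cU$ containing any given $p''\le p$ and invoking the Forcing Theorem (Proposition~\ref{fteu2}) twice---once to get $\widehat{\fN}\vDash\phi(m)$ from the hypothesis, and once more to pull back a $p'\in\cU$ forcing $\phi(m)$. The paper in fact remarks, just before stating the lemma, that a proof ``more tediously directly from the definition of the forcing relation'' is possible; that is exactly what you have supplied. Your argument is more self-contained (it does not need to step outside to countable models and generic filters), while the paper's version is shorter once the forcing--truth correspondence is in hand.
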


\begin{proof}
We show that the claim holds in every model $\cM$ of $\ZFc$.
For every $p'' \le p$ in $\cM$ there is an $\cM$-generic filter $\cU$ such that $p'' \in \cU$. 
By Proposition~\ref{fteu2} $\widehat{\fN} \vDash \forall^{\st} m \,\phi(m)$, 
so $\cM \vDash ``m \in \bbN$'' implies 
$\widehat{\fN} \vDash \phi(m)$. By~\ref{fteu2} again, there exists $p' \in \cU$ such that 
$p' \Vdash \phi(m)$. We can take $p' \le p''$.
\end{proof}

\begin{lemma}\label{decideE}
Let $ \phi(u, v)$ be an $\st$-$\in$-formula with parameters from~$\widetilde{\fL}\!.$.
Then $\ZFc$ proves the following: $\quad$
If $p \Vdash \forall^{\st} m \,\exists v\, \phi(m, v)$, then  there is 
$p' \in \bbP $ and a sequence $\la f_m \,\mid\, m \in \bbN \ra$ such that $p'$ extends $p $ and  
$p' \Vdash \phi(m, f_m)$ for every $m \in \bbN$.
\end{lemma}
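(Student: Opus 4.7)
The plan is to combine Lemma~\ref{forall} with clause (10') of the forcing relation, use $\ADC$ to build a descending chain of conditions together with witnesses, and then amalgamate them by a single diagonal condition using the preservation property from Lemma~\ref{basic1}(4). Since $\ZFc = \ZF + \ADC$, dependent choice is available in the metatheory (here the object theory of the forcing argument).

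First, I unpack the hypothesis. By Lemma~\ref{forall}, for each $m \in \bbN$ the class $\{q \le p : q \Vdash \exists v\,\phi(m,v)\}$ is dense in $p$. By clause~(10'), whenever $q \Vdash \exists v\,\phi(m,v)$, every $q'' \le q$ extends to some $q''' \le q''$ together with $f \in \bbF$ such that $q''' \Vdash \phi(m,f)$. Hence for each $m$, the set
\[
E_m = \{(q,f) \in \bbP \times \bbF : q \le p \text{ and } q \Vdash \phi(m,f)\}
\]
projects onto a class of conditions dense below $p$.

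Second, I use $\ADC$ to construct recursively a sequence $\langle (p_m, f_m, n_m) : m \in \omega \rangle$ with $p_0 = p$, each $p_{m+1} \le p_m$ in $\bbP$, each $f_m \in \bbF$ satisfying $p_{m+1} \Vdash \phi(m,f_m)$, and $n_m = \min(p_{m+1})$ satisfying $n_0 < n_1 < \cdots$. The existence of a successor at each step uses the density statement from the previous paragraph, together with the fact that we may always shrink the chosen condition to remove any fixed finite initial segment. The dependent-choice relation naturally takes the current state $(p_m, n_{m-1}, \langle f_0, \dots, f_{m-1}\rangle)$ to a legal successor.

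Finally, I diagonalize: set $p' = \{n_m : m \in \omega\}$. This is an infinite subset of $\bbN$, hence $p' \in \bbP$, and since each $n_m \in p_{m+1} \subseteq p_0 = p$ we have $p' \le p$. For every fixed $m$, the tail $\{n_k : k \ge m\}$ is contained in $p_{m+1}$ (because $p_{k+1} \subseteq p_{m+1}$ when $k \ge m$), so $p' \setminus p_{m+1} \subseteq \{n_0, \dots, n_{m-1}\}$ is bounded. By Lemma~\ref{basic1}(4), whose validity carries over verbatim to the simplified forcing of Definition~\ref{forcing2} (the $\forall^{\infy}$-based atomic clauses and clause~(11) are preserved under bounded symmetric difference, and the inductive cases go through unchanged), we conclude $p' \Vdash \phi(m,f_m)$ for every $m \in \bbN$, as required.

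The main obstacle is the diagonalization bookkeeping: one must be careful that the chosen $p_{m+1}$ are shrunk enough to guarantee $n_0 < n_1 < \cdots$ while still satisfying the forcing requirement, and that the preservation lemma applies in the present setting with $\st$-formulas in the language. Verifying that Lemma~\ref{basic1}(4) survives the extension to clause~(11) is the only genuinely new content and is a routine induction once the atomic case is checked.
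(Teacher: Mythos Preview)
Your argument is essentially correct and follows the same strategy as the paper: invoke Lemma~\ref{forall} and clause~(10') to obtain dense sets of witnessing conditions below $p$, build a descending chain by dependent choice, and diagonalize so that $p'\setminus p_{m+1}$ is bounded for every $m$. Your single-point diagonal $p'=\{n_m:m\in\omega\}$ is a slight simplification of the paper's interval-based $p'=\bigcup_m p_m\cap(i_{m+1}\setminus i_m)$; both work here since $I=\bbN$.

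There is one technical gap. Your state for the $\ADC$ recursion carries the tuple $\langle f_0,\dots,f_{m-1}\rangle$ with each $f_i\in\bbF$, but $\bbF$ is a proper class, so the domain of your dependent-choice relation is a proper class and $\ADC$ as stated (a relation on a \emph{set} $A$) does not apply. The paper avoids this by decoupling the two choices: it first applies $\ADC$ to the set relation $\mathbf{R}$ on $\bbN\times\bbP$ to obtain the chain $\langle p_m\rangle$ with $\exists f\in\bbF\;(p_m\Vdash\phi(m,f))$, and only then selects witnesses $\langle f_m\rangle$ via $\ACC$, invoking the Reflection Principle to replace $\bbF$ by a set $\bbF\cap V_\alpha$. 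You can fix your version the same way, or simply invoke Reflection once at the outset to bound all potential witnesses inside some $V_\alpha$ before running a single $\ADC$.
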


\begin{proof}
By Lemma~\ref{forall}, the set 
$D_m = \{ p' \in \bbP \,\mid\, p' \Vdash \exists v \,\phi (m, v)\}$ is dense in $p$ for each $m \in \bbN$.
Clause (10') in the definition of simplified forcing implies that also the set 
$E_m = \{ p' \in \bbP \,\mid\,\exists f\in \bbF\; p' \Vdash \phi (m, f) \}$ is dense in $p$.
We let $$\la m', p'\ra \mathbf{R} \la m'',p''\ra\text{ iff } p'' \subset p'\subseteq p \,\wedge\,m'' = m'+1 \,\wedge\,
p' \in E_{m'} \,\wedge \, p'' \in E_{m''}.$$
Applying $\ADC$ to the relation $ \mathbf{R}$ we obtain  a sequence $ \la p_m \,\mid\, m \in \bbN \ra$ 
such that $  p_0  \subseteq p $ and, for each $m$,   
 $p_{m+1} \subset p_m$ and $\exists f \in \bbF\, (p_m \Vdash \phi (m, f))$.
We next use $\ACC$ to obtain a sequence $\la f_m \,\mid\, m \in \bbN \ra$ such that 
$ p_m \Vdash \phi (m, f_m)$.
Note that the Reflection Principle of $\ZF $ provides a set $A = V_{\alpha} $ such that for all $m$, 
$$(\exists f \in \bbF\; p_m \Vdash \phi (m, f) )\imp (\exists f \in\bbF \cap  A \,\; p_m \Vdash \phi (m, f)).$$

As in the proof of  Lemma~\ref{decideN}, let $i_m$ be the least element of $p_{m}$ and 
let $  p' =\bigcup_{m = 0}^{\infty} p_m \cap(i_{m + 1} \setminus i_m)$.
Then  for every $m$ the set $p' \setminus p_m$ is bounded, hence $ p' \Vdash \phi (m, f_m)$.
\end{proof}

\begin{proposition}\label{cc}
If $\cM$ satisfies $\ZFc$, then $\widehat{\fN} $ satisfies $\CC$.
\end{proposition}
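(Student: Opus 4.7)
The plan is to pull the hypothesis of $\CC$ back to a forcing fact via Proposition~\ref{fteu2}, use Lemma~\ref{decideE} together with $\cM$-genericity to extract an \emph{internal} witnessing sequence inside $\cM$, and then diagonalize that sequence into a single function in $\bbF^{\cM}$ whose ultrapower class is the desired choice function in $\widehat{\fN}$.

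Concretely, suppose $\widehat{\fN}\vDash\forall^{\st} n\in\bbN\,\exists x\,\phi(n,x)$, where the parameters of $\phi$ are named in $\widetilde{\fL}$ by elements of $\bbF^{\cM}$. By Proposition~\ref{fteu2} one may fix some $p_0\in\cU$ with $\cM\vDash\bigl(p_0\Vdash\forall^{\st} n\in\bbN\,\exists v\,\phi(n,v)\bigr)$; by Lemma~\ref{basic1}(1) the same is forced by every $p\le p_0$. Working inside $\cM$, Lemma~\ref{decideE} applied below an arbitrary such $p$ yields some $p'\le p$ together with a sequence $\la f_m\mid m\in\bbN\ra\in M$ with $p'\Vdash\phi(m,f_m)$ for every $m\in\bbN^{\cM}$. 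A Reflection argument in $\cM$ then bounds the rank of such witnessing sequences, so for some ordinal $\alpha$ the set
$$D=\{p'\le p_0\mid\exists\la f_m\ra\in V_\alpha\,\forall m\in\bbN\,(p'\Vdash\phi(m,f_m))\}$$
is a genuine element of $M$, dense below $p_0$ in $\cM$. Invoking $\cM$-genericity of $\cU$, pick $p^*\in D\cap\cU$ and fix a corresponding witnessing sequence $\la f_m\mid m\in\bbN\ra$ in $\cM$.

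Finally, internally in $\cM$ define $F\in\bbF$ by $F(i)(m)=f_m(i)$ for $i\in I$ and $m\in\bbN^{\cM}$; each $F(i)$ is then a function in $\cM$. By \L o\'{s}'s Theorem for $\in$-formulas, $[F]_{\cU}$ is a function in $\fN$ with $[F]_{\cU}(m)=[f_m]_{\cU}$ for every $m\in\Omega$. Combining $p^*\in\cU$ with $p^*\Vdash\phi(m,f_m)$, Proposition~\ref{fteu2} gives $\widehat{\fN}\vDash\phi(m,[F]_{\cU}(m))$ for every standard natural number $m$, so $[F]_{\cU}$ witnesses $\CC$ in $\widehat{\fN}$.

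The main obstacle is the bookkeeping around \emph{internality}: the witnessing sequence must live inside $\cM$ rather than merely in the metatheory, or else the diagonalization $F$ fails to be a legitimate element of $\bbF^{\cM}$; and the dense class of suitable conditions must be replaced by a set in $M$ before $\cM$-genericity can be applied. This is exactly why Lemma~\ref{decideE} was formulated as a theorem of $\ZFc$ (so it produces an internal sequence) and why the Reflection step is needed above. Once these two points are dealt with, the rest reduces to an application of \L o\'{s}'s Theorem and the Fundamental Theorem of extended ultrapowers.
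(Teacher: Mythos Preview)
Your argument is correct and follows essentially the same route as the paper: pull the hypothesis back to a forcing condition via Proposition~\ref{fteu2}, invoke Lemma~\ref{decideE} to obtain an internal witnessing sequence $\la f_m\ra$, diagonalize into a single $F\in\bbF^{\cM}$ (the paper's $g$ with $g(i)=\{\la m,f_m(i)\ra\mid m\in\bbN\}$), and conclude by \L o\'{s}. The one place where you are more explicit than the paper is in justifying why the witnessing $p'$ can be taken in $\cU$: the paper simply writes ``by Lemma~\ref{decideE} there is $p'\in\cU$\ldots'', tacitly using density plus genericity, whereas you spell out the Reflection step needed to turn the dense class of suitable $p'$ into a dense \emph{set} in $M$ before invoking $\cM$-genericity---a worthwhile clarification, but not a different idea.
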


\begin{proof}
Assume that $\widehat{\fN}  \vDash \forall^{\st} m \,\exists v\, \phi(m, v)$.
Then there is $p \in \cU$ such that $p \Vdash \forall^{\st} m \,\exists v\, \phi(m, v)$.
By Lemma~\ref{decideE}  there is 
$p' \in \cU$ and a sequence $\la f_m \,\mid\, m \in \bbN \ra$ such that 
$p' \Vdash \phi(m, f_m)$ for every $m \in \bbN$.

We define a function $g$ on $I$ by 
$g(i) = \{ \la m, f_m(i) \ra \,\mid\, m \in \bbN\}$.
Recall that  $\check{g}$ is the name for $g$ in the forcing language. 
By \L o\'{s}'s Theorem, 
$p' \Vdash ``\check{g}$ is a function with domain $\bbN$''\!, and, for every $m \in \bbN$, 
$p' \Vdash \check{g}(m) = f_m$.
We conclude that $\widehat{\fN} \vDash ``\check{g}$ is a function with domain $\bbN$''\!, and $\widehat{\fN} \vDash \forall^{\st} m \in \bbN\,\phi(m, \check{g}(m))$.
\end{proof}

\begin{proposition}\label{ccst}
If $\cM$ satisfies $\ZFc$, then $\widehat{\fN} $ satisfies $\CC^{\st}$.
\end{proposition}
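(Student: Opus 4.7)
The plan is to adapt the argument of Proposition~\ref{cc}, strengthening it so that the witnesses for the quantifier $\exists^{\st} x$ in the hypothesis of $\CC^{\st}$ are extracted as constant functions from standard elements of $M$; this allows the witnesses to be assembled inside $\cM$ into a standard function $F \in M$, as required for $\CC^{\st}$.

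First I would establish a strengthened form of Lemma~\ref{decideE}, provable in $\ZFc$: if $p \Vdash \forall^{\st} m \in \bbN \, \exists^{\st} x \, \phi(m, x)$, then there exist $p' \in \bbP$ extending $p$ and a sequence $\la x_m \,\mid\, m \in \bbN \ra$ with each $x_m \in \bbV$ such that $p' \Vdash \phi(m, c_{x_m})$ for every $m \in \bbN$, where $c_{x_m}$ is the constant function on $I$ with value $x_m$. By Lemma~\ref{forall}, the set $\{ q \le p : q \Vdash \exists^{\st} x \, \phi(m, x) \}$ is dense below $p$ for each $m$. Unfolding clauses (10$'$) and (11) of Definition~\ref{forcing2}, every such $q$ admits a further extension $q'' \le q$ with some $f \in \bbF$ for which $q'' \Vdash \st(f) \wedge \phi(m, f)$; clause (11) then produces $x \in \bbV$ with $f(i) = x$ for almost all $i \in q''$, and restricting $q''$ to the cobounded subset on which $f \equiv x$ gives $q''' \le q$ with $q''' \Vdash \phi(m, c_x)$ (the replacement of $f$ by $c_x$ being justified by a routine substitution property of forcing, since $f \uh q''' = c_x \uh q'''$). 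Hence $D_m^{\st} = \{ q \le p : \exists x \in \bbV \, q \Vdash \phi(m, c_x) \}$ is dense below $p$. Applying $\ADC$ inside $\cM$ to the relation $\la m, q \ra \, \mathbf{R} \, \la m+1, q' \ra$ iff $q' \subsetneq q \subseteq p$ with $q \in D_m^{\st}$ and $q' \in D_{m+1}^{\st}$, I obtain a descending chain $\la p_m \,\mid\, m \in \bbN \ra$ with each $p_m \in D_m^{\st}$. The Reflection Principle supplies some $V_\alpha$ such that, for every $m$, $(\exists x \, p_m \Vdash \phi(m, c_x)) \Rightarrow (\exists x \in V_\alpha \, p_m \Vdash \phi(m, c_x))$, and $\ACC$ then selects $\la x_m \,\mid\, m \in \bbN \ra$ with $x_m \in V_\alpha$ and $p_m \Vdash \phi(m, c_{x_m})$. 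The diagonal $p' = \bigcup_{m} p_m \cap [i_m, i_{m+1})$, where $i_m = \min p_m$, lies in $\bbP$, extends $p$, and has $p' \setminus p_m$ bounded for each $m$, so Lemma~\ref{basic1}(4) gives $p' \Vdash \phi(m, c_{x_m})$ for every $m$.

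Next, I would apply the strengthened lemma inside $\cM$ to a condition $p \in \cU$ forcing $\forall^{\st} n \in \bbN \, \exists^{\st} x \, \phi(n, x)$, which exists by Proposition~\ref{fteu2} from the hypothesis $\widehat{\fN} \vDash \forall^{\st} n \, \exists^{\st} x \, \phi(n, x)$. The resulting sequence $\la x_m \,\mid\, m \in \bbN \ra$ is an element of $\cM$, so $F = \{ \la m, x_m \ra \,\mid\, m \in \bbN \}$ is a function in $M$, hence standard in $\widehat{\fN}$. By Corollary~\ref{elememb}, ``$F$ is a function with domain $\bbN$'' transfers to $\widehat{\fN}$, and $\widehat{\fN} \vDash F(m) = x_m$ for every $m \in \Omega$. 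The concluding step in the proof of Proposition~\ref{cc}, applied verbatim with $c_{x_m}$ in place of $f_m$, converts the forcing assertion $p' \Vdash \phi(m, c_{x_m})$ (for each $m \in \Omega$) into $\widehat{\fN} \vDash \phi(m, x_m)$, i.e.\ $\widehat{\fN} \vDash \phi(m, F(m))$; this is exactly $\widehat{\fN} \vDash \forall^{\st} n \in \bbN \, \phi(n, F(n))$, and together with $\widehat{\fN} \vDash \st(F)$ gives $\widehat{\fN} \vDash \CC^{\st}$.

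The main obstacle is the density of $D_m^{\st}$: we must collapse a forcing of $\exists^{\st} x$ to an actual constant-function witness $c_x$. This hinges on the analysis of clauses (10$'$) and (11) of Definition~\ref{forcing2}, together with the substitution property of $\Vdash$ under $f \uh q''' = c_x \uh q'''$. Once this step is secured, the $\ADC$--Reflection--$\ACC$--diagonal machinery mirrors Lemma~\ref{decideE} without essential modification, and the passage from forcing to satisfaction in $\widehat{\fN}$ follows the template of Proposition~\ref{cc}.
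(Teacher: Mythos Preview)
Your proposal is correct and follows essentially the same approach as the paper's proof. The paper's argument is terse---it simply asserts that under the stronger hypothesis $\exists^{\st} v$ one can take $f_m = c_{x_m}$ in Lemma~\ref{decideE} and Proposition~\ref{cc}, and then observes that $g(i) = \{\la m, x_m\ra \mid m \in \bbN\}$ is a constant function, hence standard---whereas you spell out the density of $D_m^{\st}$ via clauses (10$'$) and (11) and the substitution of $c_x$ for $f$ on a cobounded set; but this is exactly the content the paper leaves implicit, and your construction of $F \in M$ is the same object as the paper's $g$ (since $g = c_F$).
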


\begin{proof}
The assumption $\widehat{\fN}  \vDash \forall^{\st} m \,\exists^{\st} v\, \phi(m, v)$ implies that 
we can take $f_m = c_{x_m}$ in Lemma~\ref{decideE} and  the proof of Proposition~\ref{cc}.
For the function $g$ on $I$ defined by 
$g(i) = \{ \la m, x_m \ra \,\mid\, m \in \bbN\}$ we then have $\widehat{\fN} \vDash ``\check{g}$ is standard''.
\end{proof}

Let $p, p' \in \bbP$ and let $\gamma$ be an increasing mapping of $p'$ onto $p$; we extend $\gamma$ to $I = \bbN$ by defining $\gamma (a) = 0$ for $a \in I \setminus p$.

\begin{lemma}\label{isomorphism}
$p\Vdash \phi(f_1 , \ldots, f_r)$ iff 
$p'\Vdash \phi(f_1\circ \gamma , \ldots, f_r \circ \gamma) $.
\end{lemma}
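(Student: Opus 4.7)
\smallskip

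The plan is to prove the lemma by induction on the logical complexity of the $\st$-$\in$-formula $\phi$, exploiting the fact that the increasing bijection $\gamma\colon p'\to p$ gives a canonical correspondence between subsets of $p'$ and subsets of $p$ that preserves the bounded/unbounded dichotomy. Specifically, if $c'\subseteq p'$ is bounded then $\gamma(c')\subseteq p$ is bounded (because a finite set of integers has finite image), and if $c'$ is unbounded then so is $\gamma(c')$ (because $\gamma$ is increasing and onto $p$). The analogous statement holds for $\gamma^{-1}\colon p\to p'$. In particular, for any $q'\in \bbP$ with $q'\subseteq p'$, the set $q:=\gamma(q')$ is in $\bbP$ with $q\subseteq p$, and vice versa; moreover $\gamma\!\uh\! q'$ is itself an increasing bijection of $q'$ onto $q$ that can serve as the $\gamma$ in the inductive hypothesis.

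For the atomic cases (1$'$) and (2$'$), I would observe that $p\Vdash f_1=f_2$ means $f_1(i)=f_2(i)$ for all $i\in p$ outside some bounded set $c$; pulling back via $\gamma$ gives $f_1(\gamma(j))=f_2(\gamma(j))$ for all $j\in p'$ outside $\gamma^{-1}(c)$, which is bounded, so $p'\Vdash f_1\!\circ\!\gamma = f_2\!\circ\!\gamma$; the reverse direction is the same using $\gamma^{-1}$. The $\in$ case is identical. For the standardness clause (11), the statement $\forall^{\infy} i,i'\in p\;(f(i)=f(i'))$ transfers to $p'$ in exactly the same way, since $\gamma$ carries pairs in $p'$ to pairs in $p$ bijectively. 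The conjunction case is immediate.

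The negation case (8) is where one has to be careful. Assume $p\Vdash \neg\phi(\vec f)$ and suppose for contradiction that some $q'\le p'$ forces $\phi(\vec f\!\circ\!\gamma)$. Setting $q=\gamma(q')\le p$ and applying the inductive hypothesis to the increasing bijection $\gamma\!\uh\! q'\colon q'\to q$, one gets $q\Vdash \phi(\vec f)$, contradicting $p\Vdash\neg\phi(\vec f)$ by Lemma~\ref{basic1}(1). Here I use that the forcing of $\phi(\vec f\!\circ\!\gamma)$ by $q'$ depends only on the values of $\vec f\!\circ\!\gamma$ on $q'$, where $\gamma$ and $\gamma\!\uh\! q'$ agree, so the inductive hypothesis is applicable. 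The converse direction is symmetric via $\gamma^{-1}$.

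The existential case (10$'$) uses the same correspondence. If $p\Vdash \exists v\,\psi(\vec f,v)$, then given any $q'\le p'$, let $q=\gamma(q')\le p$; there exist $r\le q$ and $g\in\bbF$ with $r\Vdash\psi(\vec f,g)$. Set $r'=\gamma^{-1}(r)\le q'$; by inductive hypothesis applied to $\gamma\!\uh\! r'\colon r'\to r$, we obtain $r'\Vdash \psi(\vec f\!\circ\!\gamma,g\!\circ\!\gamma)$ (after extending $g\!\circ\!\gamma$ arbitrarily off $p'$, which does not affect forcing). Thus $p'\Vdash \exists v\,\psi(\vec f\!\circ\!\gamma,v)$; the converse is symmetric. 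I expect the main obstacle to be purely bookkeeping, namely the consistent use of restrictions $\gamma\!\uh\! q'$ and the observation that forcing is insensitive to values of a name off the current condition, so that substituting $g\!\circ\!\gamma$ for $g\!\circ\!(\gamma\!\uh\! r')$ is legitimate.
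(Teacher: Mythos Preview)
Your proposal is correct and takes essentially the same approach as the paper: induction on the clauses of the simplified forcing, driven by the fact that $\gamma$ induces a bijection between unbounded subsets of $p$ and of $p'$ (equivalently, an isomorphism of the posets $\{p''\in\bbP:p''\le p\}$ and $\{p''\in\bbP:p''\le p'\}$). The paper's proof is a one-line summary of exactly this; you have simply spelled out the cases, and your one extra observation---that forcing by $q'$ depends only on the values of the names on $q'$, so that $\vec f\circ\gamma$ and $\vec f\circ(\gamma\!\uh\! q')$ are interchangeable there---is routine and implicit in the paper's formulation.
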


\begin{proof}
This follows by induction on the cases in the definition of simplified forcing, using the observation that the mapping $p'' \to   p' \cap \gamma^{-1}[p''] $ is an isomorphism of the
posets $(\{ p'' \in \bbP \,\mid\, p'' \le p\}, \le)$  and $(\{ p'' \in \bbP \,\mid\, p'' \le p'\}, \le)$.
\end{proof}

\begin{corollary}
Let $\phi$ be an $\st$-$\in$-formula with parameters from $\bbV$.
Then $p \Vdash \phi$ iff $p' \Vdash \phi$.
\end{corollary}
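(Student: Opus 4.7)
The corollary should follow directly from Lemma~\ref{isomorphism}. The plan is as follows. First I would observe that a parameter $z \in \bbV$ is identified in the forcing language $\widetilde{\fL}$ with the constant function $c_z \in \bbF$ given by $c_z(i) = z$ for all $i \in I$---this is exactly the identification already used when we wrote $x \in M$ with $[c_x]_{\cU}$ earlier in the subsection. Second, for \emph{any} function $\gamma \colon I \to I$, the composition $c_z \circ \gamma$ equals $c_z$, since $c_z(\gamma(a)) = z$ regardless of the value of $\gamma(a)$.

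To execute the argument, given $p, p' \in \bbP$ I would take the (unique) increasing bijection of $p'$ onto $p$---which exists because both $p$ and $p'$ are unbounded subsets of $\bbN$ and hence each order-isomorphic to $\bbN$---and extend it to $I$ as prescribed before Lemma~\ref{isomorphism}. Writing $\phi = \phi(\check{z}_1, \ldots, \check{z}_r)$ with $z_j \in \bbV$ and applying Lemma~\ref{isomorphism} with $f_j = c_{z_j}$, I get that $p \Vdash \phi(c_{z_1}, \ldots, c_{z_r})$ iff $p' \Vdash \phi(c_{z_1} \circ \gamma, \ldots, c_{z_r} \circ \gamma)$. Since $c_{z_j} \circ \gamma = c_{z_j}$ for each $j$, the right-hand side collapses to $p' \Vdash \phi(c_{z_1}, \ldots, c_{z_r})$, giving the desired equivalence.

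The main obstacle is essentially nil: all of the substantive induction over the recursive clauses of the forcing relation has already been absorbed into Lemma~\ref{isomorphism}, and what remains is the triviality that composing with a constant function changes nothing. The only detail worth double-checking is that the ``extension by $0$'' convention on $I \setminus p'$ really is irrelevant for constant functions, but this is immediate since such functions ignore their arguments entirely.
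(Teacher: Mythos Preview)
Your proposal is correct and is exactly the argument the paper intends: the corollary is stated without proof precisely because it follows immediately from Lemma~\ref{isomorphism} once one notes that parameters from $\bbV$ are represented by constant functions $c_z$, and $c_z \circ \gamma = c_z$. Your remark about the existence of the increasing bijection (both $p$ and $p'$ being unbounded in $\bbN$) is the only detail the paper leaves entirely implicit, and you have supplied it correctly.
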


\begin{proposition}\label{extsn}
The structure $\widehat{\fN} $ satisfies the principle  of Standardization for $\st$-$\in$-formulas with no parameters.
\end{proposition}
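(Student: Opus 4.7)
The plan is to exploit the homogeneity of the simplified forcing given by the corollary to Lemma~\ref{isomorphism}: for any $\st$-$\in$-formula whose parameters all come from $\bbV$, whether a condition $p \in \bbP$ forces it is independent of the choice of $p$. A no-parameter formula $\phi(v)$ evaluated at a standard $x \in M$ has only the ``$\bbV$-parameter'' $c_x$ (the constant function with value $x$), so this homogeneity applies and lets us define the Standardization set entirely inside $\cM$.

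Given such a $\phi$ and a standard set $A \in M$, I pass inside $\cM$ and use Separation to form
$$ S \;=\; \{\, x \in A \,\mid\, I \Vdash \phi(c_x)\,\}, $$
where $I$ denotes the top element of $\bbP$. For each fixed $\phi$, the relation ``$p \Vdash \phi(c_x)$'' is expressible by a single $\in$-formula of $\cM$ in the parameters $p, x$ (by meta-induction on the logical complexity of the fixed $\phi$), so $S \in M$ and hence $S$ is standard in $\widehat{\fN}$.

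For the verification: by elementarity of $j$ (Corollary~\ref{elememb}), $\widehat{\fN} \vDash x \in S$ iff $x \in^{\cM} S$ and similarly for $A$. The interesting case is $x \in^{\cM} A$, where the definition of $S$ gives $x \in^{\cM} S \eqi \cM \vDash (I \Vdash \phi(c_x))$. By the corollary to Lemma~\ref{isomorphism}, this last is equivalent to ``\emph{some} $p \in \bbP$ forces $\phi(c_x)$''; and by Proposition~\ref{fteu2} this is in turn equivalent to $\widehat{\fN} \vDash \phi(x)$ (using $I \in \cU$ for the direction that produces the witness in $\cU$).

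The only real content is the invocation of homogeneity; without it, the witnessing condition for $\widehat{\fN} \vDash \phi(x)$ could depend on $x$ and on $\cU$, and the set $S$ could not be defined purely within $\cM$. This is precisely why the proposition is restricted to formulas with no external parameters: a non-constant $f \in \bbF$ occurring as a parameter of $\phi$ would be altered by the composition with the isomorphism $\gamma$ of Lemma~\ref{isomorphism}, breaking homogeneity and, consistent with the observation in Section~\ref{starzf}, precluding the existence of a suitable standard $S$ inside $M$.
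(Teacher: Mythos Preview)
Your proof is correct and follows essentially the same route as the paper's: both use the homogeneity corollary to Lemma~\ref{isomorphism} to show that, for a parameter-free $\phi$ and standard $x$, the truth of $\widehat{\fN}\vDash\phi(x)$ is decided by a single $\in$-condition on $x$ inside $\cM$, and then apply Separation. The only cosmetic difference is that you fix the top condition $I$ while the paper quantifies over all $p\in\bbP$; by homogeneity these are equivalent formulations.
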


\begin{proof}
For standard $x$, $\widehat{\fN} \vDash \phi(x)$  iff  $p \Vdash \phi(x)$
for every $p \in \bbP$. The right side is expressible by an $\in$-formula.
\end{proof}

\emph{This completes the proof of Theorem \textbf{D}.}\\

Another principle that can be added to $\SCOT$ is Dependent Choice for $\st$-$\in$-formulas.\\

  $\mathbf{DC}\;$  Let $\phi (u,v)$ be an $\st$-$\in$-formula with arbitrary parameters. \\
If $B$ is a set, $b \in B$ and $\forall x \in B\, \exists y \in B\; \phi(x,y)$, then there is a sequence $\la b_n \,\mid\, n \in \bbN\ra$ such that $b_0 = b $ and 
$\forall^{\st} n \in \bbN\, (b_n \in B \,\wedge\, \phi(b_n, b_{n+1}))$.\\

\begin{theorem}
$\SCOT + \mathbf{DC}\;$ is a conservative extension of $\ZFc$.
\end{theorem}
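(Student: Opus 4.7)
The approach will adapt the proof of Theorem~\textbf{D}, in particular Lemma~\ref{decideE} and Proposition~\ref{cc}, to handle a dependent binary relation. Given a countable $\cM \vDash \ZFc$, an $\cM$-generic filter $\cU$ on $\bbP$, and the structure $\widehat{\fN} = (N,\in^{\ast},M)$ of Proposition~\ref{ext}---which already satisfies $\SCOT$---I verify $\mathbf{DC}$ directly in $\widehat{\fN}$. Suppose $\widehat{\fN}$ satisfies the antecedent of $\mathbf{DC}$ for some $B$, $b \in B$ and $\st$-$\in$-formula $\phi$; by Proposition~\ref{fteu2}, some $p_0 \in \cU$ forces $b \in B \,\wedge\, \forall x \in B\,\exists y \in B\,\phi(x,y)$, with $b$ represented by the constant function $c_b$.

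The key step, provable in $\cM$, is a dependent-recursion analogue of Lemma~\ref{decideE}: for every $p \le p_0$ and $f \in \bbF$ with $p \Vdash f \in B$, there exist $p' \le p$ in $\bbP$ and $f' \in \bbF$ with $p' \Vdash f' \in B \,\wedge\, \phi(f, f')$. This follows from clause (10$'$) of Definition~\ref{forcing2} applied to $p \Vdash \exists y\,(y \in B \,\wedge\, \phi(f,y))$, which in turn follows from $p \le p_0$ and Lemma~\ref{forall}. Iterating inside $\cM$ by $\ADC$---with Reflection pulling the ambient class $\bbF$ down to some $V_\alpha$ so that the underlying relation is a set---produces a descending chain $p_0 \supseteq p_1 \supseteq \cdots$ and a sequence $\la f_n \,\mid\, n \in \bbN \ra$ with $f_0 = c_b$ and $p_{n+1} \Vdash f_{n+1} \in B \,\wedge\, \phi(f_n, f_{n+1})$. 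The diagonalization of Lemma~\ref{decideN}, $p^{\ast} = \bigcup_n (p_n \cap [i_n, i_{n+1}))$ where $i_n = \min p_n$, yields a single $p^{\ast} \le p_0$ in $\bbP$ with $p^{\ast}\setminus p_n$ bounded for every $n$, hence $p^{\ast} \Vdash f_{n+1} \in B \,\wedge\, \phi(f_n, f_{n+1})$ for every $n \in \bbN$.

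As in the proof of Proposition~\ref{cc}, I then code the sequence by $g \in \bbF$ via $g(i) = \{\la n, f_n(i)\ra \mid n \in \bbN\}$. By \L o\'{s}'s theorem for $\in$-formulas, $p^{\ast}$ forces that $g$ is a function on $\bbN$, that $g(0) = b$, and that $g(n) = f_n$ for each $n \in \bbN$; therefore $p^{\ast} \Vdash \forall^{\st} n \in \bbN\,(g(n) \in B \,\wedge\, \phi(g(n), g(n+1)))$. Since the entire construction can be repeated beneath any extension of $p_0$, the collection of conditions admitting such a witness $g$ is dense below $p_0$ in $\cM$; by $\cM$-genericity of $\cU$ some such $p^{\ast}$ lies in $\cU$, and Proposition~\ref{fteu2} transfers the forced sentence to $\widehat{\fN}$, producing in $\fN$ a sequence that witnesses $\mathbf{DC}$ for $b$, $B$, $\phi$.

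The main obstacle is the dependent nature of the recursion: the choice of $f_{n+1}$ must use $f_n$, so $\ACC$ alone does not suffice and the argument genuinely requires $\ADC$ in $\cM$---which is precisely why the theorem is conservative over $\ZFc$ rather than $\ZF$. A secondary technicality, already implicit in Proposition~\ref{cc}, is that the naive dense set of witnessing conditions quantifies over the proper class $\bbF$; Reflection inside $\cM$ restricts the sequences $\la f_n\ra$ (and the ranges of the $f_n$) to some $V_\alpha$, so that set-level density and the usual set-level $\cM$-genericity of $\cU$ are sufficient for the final step.
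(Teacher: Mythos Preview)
Your proposal is correct and follows essentially the same approach as the paper: apply $\ADC$ in $\cM$ to a relation on pairs $\la p', f' \ra$ with $p' \Vdash f' \in B$, obtain a descending chain of conditions forcing the successive $\phi$-steps, diagonalize as in Lemma~\ref{decideE}, and package the $f_n$ into a single $g$ as in Proposition~\ref{cc}. Two minor slips worth correcting: since $b$ and $B$ are arbitrary elements of $N$ (not standard), represent $b$ by some $f_b \in \bbF$ rather than the constant function $c_b$; and the passage from $p \Vdash f \in B$ to some $p' \le p$ forcing $\exists y \in B\,\phi(f,y)$ follows from the general forcing machinery (clause~(8)/(10$'$) or Proposition~\ref{fteu2}), not from Lemma~\ref{forall}, which is specific to $\forall^{\st} m \in \bbN$.
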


\begin{proof}
We show that $\mathbf{DC}$ holds in the structure $\widehat{\fN} $.

Let $\cM \vDash``b, B \in \bbF$'' and
$p \Vdash b \in B \,\wedge\, \forall x \in B\, \exists y \in B\; \phi(x,y)$. We now let 
$A = \{ \la p', f' \ra \,\mid\, p' \le p \,\wedge\, p' \Vdash f' \in B \}$, note that $\la p, b \ra \in A$,   and define $\mathbf{R}$ on $A$ by
$ \la p', f' \ra \mathbf{R} \la p'', f'' \ra \text{ iff } p'' \le p' \,\wedge\, p'' \Vdash \phi(f', f'').$

It is clear from the properties of forcing that for every $ \la p', f' \ra \in A$ there is $\la p'', f'' \ra  \in A$ such that $ \la p', f' \ra \mathbf{R} \la p'', f'' \ra $. Using $\ADC$ we obtain a sequence 
$\la \la p_n, f_n \ra \,\mid\, n \in \bbN \ra$ such that $p_0 \le p$, $f_0 = b$, and for all $n \in \bbN$ $\la p_n, f_n\ra \in A$, $p_{n+1} \le p_n$, and $p_{n+1} \Vdash \phi(f_n, f_{n+1})$.

The rest of the proof imitates the arguments in the last paragraphs of the proofs of Lemma~\ref{decideE} and Proposition~\ref{cc}.
\end{proof}


\section{Standardization for parameter-free formulas}
\label{forcingexternal}

In this section we prove that the structure $\widehat{\fN} = (N, \in^{\ast} , M)$ satisfies the principle of Standardization for parameter-free formulas assuming only that the model $\cM$ satisfies $\ZF$.
Explicitly, the principle postulates:\\

$\SN$  Let $\phi (v)$ be an $\st$-$\in$-formula with no parameters. Then
$$\forall^{\st} A\, \exists^{\st}  S\; \forall^{\st} x\; (x \in S \eqi x \in A \,\wedge\,\phi(x)). $$

\begin{lemma} 
The principle $\SN$ is equivalent to Standardization for $\st$-$\in$-formulas with standard parameters.
\end{lemma}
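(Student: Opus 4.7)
The plan is to show both directions separately. The easier direction is immediate: if Standardization holds for every $\st$-$\in$-formula with standard parameters, then in particular it holds for those with none, so $\SN$ follows. The real content is the converse, that $\SN$ already yields Standardization for formulas whose parameters happen to be standard.

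For the converse, let $\phi(v, p_1,\ldots, p_k)$ be an $\st$-$\in$-formula whose only parameters are the standard sets $p_1, \ldots, p_k$, and let $A$ be a standard set. The goal is to produce a standard $S$ with $\forall^{\st} x\,(x\in S \eqi x \in A \wedge \phi(x, p_1,\ldots,p_k))$. I would absorb the parameters into the formula by viewing $\phi$ as a parameter-free formula $\widetilde\phi(v, q_1,\ldots,q_k)$ in $k+1$ free variables, and then restrict attention to the ``slice'' where $q_i = p_i$. Concretely, form the set
\[
B = A \times \{p_1\} \times \cdots \times \{p_k\},
\]
which is standard by Transfer from the standardness of $A, p_1, \ldots, p_k$. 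Define the parameter-free $\st$-$\in$-formula
\[
\psi(z) \;\equiv\; \exists x\,\exists q_1\cdots\exists q_k\bigl(z = \la x, q_1, \ldots, q_k\ra \wedge \widetilde\phi(x, q_1,\ldots,q_k)\bigr).
\]

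Now apply $\SN$ to the standard set $B$ and the parameter-free formula $\psi$: there is a standard $T \subseteq B$ with $\forall^{\st} z\,(z \in T \eqi z \in B \wedge \psi(z))$. Finally set
\[
S = \{x \mid \la x, p_1, \ldots, p_k\ra \in T\}.
\]
This $S$ is standard, being definable from the standard parameters $T, p_1,\ldots,p_k$ via an $\in$-formula (so its standardness follows by Transfer applied to its defining condition). For any standard $x$, the tuple $\la x, p_1,\ldots, p_k\ra$ is standard, and the chain of equivalences $x \in S \eqi \la x,p_1,\ldots,p_k\ra \in T \eqi (\la x,p_1,\ldots,p_k\ra \in B \wedge \psi(\la x,p_1,\ldots,p_k\ra)) \eqi (x \in A \wedge \phi(x,p_1,\ldots,p_k))$ gives exactly what is required.

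I do not anticipate a serious obstacle; the only point requiring care is justifying that $B$ and $S$ are standard, which is where Transfer is invoked in a straightforward way (one needs that from standard building blocks one can form standard products and standard separated subsets defined by $\in$-formulas with standard parameters). The key insight is just the packaging trick: $\SN$ alone is seemingly weaker, but because standard parameters can always be reintroduced as bound variables inside $\psi$ and then pinned down again by slicing a standard product, the apparent loss of expressive power is illusory.
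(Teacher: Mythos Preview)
Your proof is correct and follows essentially the same approach as the paper: pack the standard parameters into a product $A \times \{p_1\} \times \cdots \times \{p_k\}$ (the paper writes this as $A \times P$ with $P = \{\langle p_1,\ldots,p_\ell\rangle\}$), apply $\SN$ to the parameter-free tuple formula, and then slice back using Transfer to recover a standard $S$. The only cosmetic difference is that you also spell out the trivial direction and the Transfer justifications for the standardness of $B$ and $S$, which the paper leaves implicit.
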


\begin{proof}
Given $\phi (v,p_1,\ldots,p_\ell)$ where $p_1,\ldots,p_\ell$ are standard, we let $P= \{\la p_1,\ldots,p_\ell \ra \}$ and apply $\SN$ to the formula $\psi (w)$ with no parameters expressing $``\exists w_1, \ldots, w_{\ell} \, (  w = \la v, w_1, \ldots, w_{\ell} \ra \,\wedge\, \phi(v,w_1,\ldots,w_\ell))$''
and to the standard set $A \times P$.
We get a standard $S$ such that  for all standard inputs
$\la x,y_1,\ldots,y_\ell\ra \in S \eqi  \la x,y_1,\ldots,y_\ell\ra   \in A \times 
P \,\wedge\, \phi (x,y_1,\ldots,y_\ell)$ holds. The set $T = \{x \in A\,\mid\, \la x,p_1,\ldots,p_\ell\ra \in S\}$ standardizes $\phi (v,p_1,\ldots,p_\ell)$. 
\end{proof}

With the exception of the last proposition, in this section we work in $\ZF$. As in Section~\ref{stand}, we begin with extending forcing to $\st$-$\in$-formulas.

We  add the following clauses to Definition~\ref{forcing}:

(11) $\la p, q \ra  \Vdash \st( z)$ for every $z \in \bbV$.

(12) $\la p, q \ra  \Vdash \st( \dot{G}_{n} )$ iff $\rank q = k > n \text{ and } $
$$\exists x\, \forall^{\infy} i\in p \, \forall \la x_0,\ldots, x_{k-1}\ra \in q(i)\; (x_{n}= x) \text{ or, equivalently, } $$
$$\forall^{\infy} i, i'\in p \, \forall \la x_0,\ldots, x_{k-1}\ra \in q(i)\; 
\forall \la x'_0,\ldots, x'_{k-1}\ra \in q(i')\; (x_{n}= x'_{n}) . $$

The basic properties of forcing from Lemma~\ref{basic1} in Subsection~\ref{SS1} remain valid, but  ``\L o\'{s}'s Theorem'' does not hold for $\st$-$\in$-formulas. However,  Proposition~\ref{Los} is instrumental in the proof of Lemma~\ref{decideN}.
The technical lemma that follows is a simple consequence of Proposition~\ref{Los} for forcing of $\in$-formulas, but it remains valid even for forcing of $\st$-$\in$-formulas.

\begin{definition}
For $\sigma = \la n_1,\ldots,n_s \ra$ where $n_1,\ldots,n_s < k$ are mutually distinct
  let $\pi^{k}_{\sigma} : \bbV^k \to \bbV^s$
be the ``projection'' of $\bbV^k$ onto $\bbV^s$:
$$\pi^{k}_{\sigma}(\la x_0,\ldots,x_{k-1}\ra) = \la x_{n_1},\ldots,x_{n_s}\ra.$$
For $q \in \bbQ$ of rank $k$, $q \uh \sigma$ of rank $s$ is defined by 
$(q \uh \sigma)(i) = \pi^k_{\sigma} [q(i)]$.
\end{definition}

\begin{lemma}\label{homogeneity}
Let $\phi$ be an $\st$-$\in$-formula with parameters from $\bbV$.\\
Assume that 
$\rank q_1 =k_1$, $\sigma_1 = \la n_1,\ldots,n_s \ra$ where $n_1,\ldots,n_s <~k_1$, and
$\rank  q_2 =k_2$, $\sigma_2 = \la m_1,\ldots,m_s \ra$ with $m_1,\ldots,m_s <~k_2$.
If \\$(q_1 \uh \sigma_1)(i) = (q_2 \uh \sigma_2)(i)$ for all $i \in p$ (we write 
$q_1 \uh \sigma_1 =_{p} q_2 \uh \sigma_2$),
then
$ \la p, q_1\ra  \Vdash \phi(\dot{G}_{n_1}, \ldots, \dot{G}_{n_s})$ if and only if  
$ \la p, q_2\ra  \Vdash \phi(\dot{G}_{m_1}, \ldots, \dot{G}_{m_s})$.
\end{lemma}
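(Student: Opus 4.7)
The plan is to proceed by induction on the complexity of $\phi$, showing in each case that the forcing of $\phi$ at $\la p, q \ra$ depends only on the projection $q \uh \sigma$ onto the coordinates appearing in the formula. Because the hypothesis $q_1 \uh \sigma_1 =_p q_2 \uh \sigma_2$ is symmetric in $(q_1, \sigma_1)$ and $(q_2, \sigma_2)$, it suffices to prove one direction of the biconditional in each case.

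For atomic $\phi$, direct inspection of clauses (1)--(7) of Definition~\ref{forcing} together with the new clauses (11) and (12) for $\st$ shows that whether $\la p, q \ra \Vdash \phi(\dot G_{n_1}, \ldots, \dot G_{n_s})$ is a statement about the sets $(q \uh \sigma)(i) = \{\la x_{n_1}, \ldots, x_{n_s}\ra \mid \la x_0, \ldots, x_{k-1}\ra \in q(i)\}$ for $i$ in a cofinite subset of $p$. The hypothesis therefore yields the biconditional directly. The conjunction case is immediate from the induction hypothesis.

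The substantive cases are negation and existential quantification, both of which rely on a common lifting construction. For $\phi = \neg \psi$, suppose $\la p, q_1 \ra \Vdash \neg \psi$ and, for contradiction, that some $\la p', q_2' \ra \le \la p, q_2 \ra$ forces $\psi(\dot G_{m_1}, \ldots, \dot G_{m_s})$. Setting $k_2' = \rank q_2'$ and $k_1' = k_1 + (k_2' - k_2)$, define $q_1'(i)$, for $i \in p'$ on which the extension clause of $\la p', q_2'\ra \le \la p, q_2\ra$ holds, to consist of all tuples $\la x_0, \ldots, x_{k_1-1}, y_{k_2}, \ldots, y_{k_2'-1}\ra$ with $\la x_0, \ldots, x_{k_1-1}\ra \in q_1(i)$, $\la y_0, \ldots, y_{k_2'-1}\ra \in q_2'(i)$, and $\la x_{n_j}\ra_{j \le s} = \la y_{m_j}\ra_{j \le s}$; set $q_1'(i) = \{\emptyset_{k_1'}\}$ on the bounded remainder. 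The hypothesis $q_1 \uh \sigma_1 =_p q_2 \uh \sigma_2$ ensures that on the former set every tuple $\la y_{m_j}\ra_{j \le s}$ is witnessed by a tuple of $q_1(i)$, so $q_1'(i)$ is nonempty, and by construction $q_1' \uh \sigma_1 =_{p''} q_2' \uh \sigma_2$ on some cofinite $p'' \subseteq p'$. Passing to $\la p'', q_1'\ra$ and $\la p'', q_2'\ra$ preserves forcing by Lemma~\ref{basic1}(1), and the induction hypothesis applied to $\psi$ yields $\la p'', q_1'\ra \Vdash \psi$, contradicting $\la p, q_1\ra \Vdash \neg \psi$. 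The existential case applies this forward lift to an arbitrary $\la p', q_2'\ra \le \la p, q_2\ra$, uses $\la p, q_1\ra \Vdash \exists v\, \psi$ to obtain a witness condition $\la p'', q_1''\ra$ and index $m$, and then performs the symmetric reverse lift to produce $\la p'', q_2''\ra \le \la p', q_2'\ra$ and a matching index $m'$, with projections aligned on $\sigma_1 \cdot \la m\ra$ and $\sigma_2 \cdot \la m'\ra$.

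The main obstacle is the indexing bookkeeping in the lift: one must simultaneously ensure that $q_1'$ has nonempty fibres everywhere, that $\la p', q_1'\ra$ extends $\la p, q_1\ra$ in the strict sense of $\bbH$, and that the projection identity $q_1' \uh \sigma_1 = q_2' \uh \sigma_2$ holds on a cofinite subset of $p'$. In the existential case this is further complicated by the need to track the witness index through both a forward and a backward lift, extending $\sigma_1$ and $\sigma_2$ by a matched pair of fresh coordinates.
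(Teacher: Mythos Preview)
Your proposal is correct and follows essentially the same approach as the paper: induction on the complexity of $\phi$, with the atomic and $\st$ cases read off directly from the projection, and the negation and existential cases handled by lifting an extension from one side to the other while preserving the $\sigma$-projections. The only noteworthy difference is in the negation case: the paper first applies the inductive hypothesis to pass to the rank-$s$ projection $q'_2 \uh \sigma_2$, and then pulls back into $q_1$ via $q'_1 = (\pi^{k_1}_{\sigma_1})^{-1}[\,q'_2 \uh \sigma_2\,] \cap q_1$, which keeps $\rank q'_1 = k_1$. Your fibre-product lift to rank $k_1 + (k'_2 - k_2)$ also works, and is in fact the construction the paper itself uses in the existential case; your treatment is thus more uniform, at the cost of slightly heavier bookkeeping in the negation step.
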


\begin{proof}
The proof is by induction on the logical complexity of $\phi$.

For $\in$-formulas the assertion follows immediately from Proposition~\ref{Los}. In particular, it holds for all atomic formulas involving $\in$ and $=$ (cases (1) - (7) in the definition of forcing). It is also clear for $\st$ (cases (11) and (12)) and for conjunction (case (9)).

Case (8):

Assume that the statement is true for $\phi$, $ \la p, q_1\ra  \Vdash \neg \phi(\dot{G}_{n_1}, \ldots, \dot{G}_{n_s})$, and $ \la p, q_2\ra  \nVdash \neg \phi(\dot{G}_{m_1}, \ldots, \dot{G}_{m_s})$.
Then there exists a condition $ \la p', q'_2\ra\le \la p, q_2\ra$ such that 
$ \la p', q'_2\ra  \Vdash \phi(\dot{G}_{m_1}, \ldots, \dot{G}_{m_s})$.
From the inductive assumption it follows that  $ \la p', q'_2\uh \sigma_2\ra  \Vdash \phi(\dot{G}_{0}, \ldots, \dot{G}_{s-1})$
(recall that $q'_2\uh \sigma_2 \subseteq \bbV^s$).
Let $\bar{q} = q'_2\uh \sigma_2 \le q_2\uh \sigma_2 =_{p} q_1\uh \sigma_1$.
We define $q'_1 =(\pi^{k_1}_{\sigma_1})^{-1} [\bar{q}] \cap q_1\le q_1$.
Now $ \la p', q'_1\ra  \le \la p, q_1\ra $ and $q'_2\uh \sigma_2 =_{p'} q'_1\uh \sigma_1$, so
by the inductive assumption
$ \la p', q'_1\ra  \Vdash \phi(\dot{G}_{n_1}, \ldots, \dot{G}_{n_s})$.
This is a contradiction with $ \la p, q_1\ra  \Vdash \neg \phi(\dot{G}_{n_1}, \ldots, \dot{G}_{n_s})$.
The reverse implication follows by exchanging the roles of  $ \la p, q_1\ra$ and  $ \la p, q_2\ra$.

Case (10):

Assume that $ \la p, q_1\ra  \Vdash \exists v\, \psi(\dot{G}_{n_1}, \ldots, \dot{G}_{n_s},v)$,
Let  $\la p', q'_2\ra \le  \la p, q_2\ra$, where $\rank     q'_2 = k'_2 =  k_2 + r$.
We need to find $\la p'', q''_2\ra $ extending $\la p', q'_2\ra$ and $m$ such that 
$ \la p'', q''_2\ra  \Vdash \psi(\dot{G}_{m_1}, \ldots, \dot{G}_{m_s}, \dot{G}_m)$.
This will prove that $ \la p, q_2\ra  \Vdash \exists v\, \psi(\dot{G}_{m_1}, \ldots, \dot{G}_{m_s},v)$.

We let $\bar{q} = \pi^{k'_2}_{\sigma_2} [q'_2] \subseteq \pi^{k_2}_{\sigma_2} [q_2] =_{p} \pi^{k_1}_{\sigma_1} [q_1] $ and $\bar{\bar{q}} = (\pi^{k_1}_{\sigma_1})^{-1} [\bar{q}] \cap q_1\le q_1$.
We define $q'_1$ of rank $k'_1= k_1 + r$ by\\
$q'_1(i) = 
\{ \la x_0,\ldots, x_{k_1 - 1}, y_0, \ldots, y_{r-1} \ra  \,\mid\,
  \la x_0,\ldots, x_{k_1 - 1}\ra \in \bar{\bar{q}} \;\wedge\,$\\
$\la x_{n_1},\ldots, x_{n_s}\ra      =   
\pi^{k'_2}_{\sigma_2}  (\la x'_0,\ldots, x'_{k_2 - 1}, y_0, \ldots, y_{r-1} \ra ) \text{ for some }$\\
$ \la x'_0,\ldots, x'_{k_2 - 1}, y_0, \ldots, y_{r-1} \ra \in q'_2  \} $\\
for $i \in p';$
 $q'_1(i) = \{\emptyset_{k'_1} \}$ otherwise.
We observe that  $ q'_1(i)  \ne \emptyset$ and $\pi^{k'_1}_{\sigma_1}[q'_1]
=_{p'} \pi^{k'_2}_{\sigma_2}[q'_2]$.

We have $\la p', q'_1\ra \le \la p, q_1\ra$; hence there are $\la p'', q''_1\ra \le \la p', q'_1\ra$ with 
$\rank q''_1 = k''_1$ and $n$
such that $ \la p'', q''_1\ra  \Vdash \psi(\dot{G}_{n_1}, \ldots, \dot{G}_{n_s}, \dot{G}_{n})$.
Finally we construct $q''_2$ of rank $k''_2 = k'_2+1$ such that $\la p'', q''_2\ra \le  \la p'', q'_2\ra $ and, for some $m$,
$\pi^{k''_1}_{\sigma_1\# n}(q''_1) =_{p''} \pi^{k''_2}_{\sigma_2\# m}(q''_2) $, where 
$\sigma_1\# n = \la n_1,\ldots,n_s, n \ra$ and $\sigma_2\# m = \la m_1,\ldots,m_s, m \ra$.
By the inductive assumption, this establishes 
$ \la p'', q''_2\ra  \Vdash  \psi(\dot{G}_{m_1}, \ldots, \dot{G}_{m_s},\dot{G}_{m})$.

We start with $\widehat{q} = \pi^{k''_1}_{\sigma_1}[q''_1] \subseteq \pi^{k'_1}_{\sigma_1}[q'_1]
=_{p'} \pi^{k'_2}_{\sigma_2}[q'_2]$ and define\\
$q''_2(i)  =$
$\{ \la x_0,\ldots, x_{k'_2-1}, z \ra \,\mid$ $ \la x_0,\ldots, x_{k'_2-1} \ra \in 
(\pi^{k'_2}_{\sigma_2})^{-1} [ \widehat{q}\,] \cap q'_2\;\wedge$\\
$ \la x_{n_1},\ldots,x_{n_s}, z\ra \in \pi^{k''_1}_{\sigma_1\# n}(q''_1)  \}$\\
 for $i \in p''$;  $q''_2(i) = \{\emptyset_{k'_2 + 1} \}$ otherwise.\\
We have $\la p'', q''_2\ra \le  \la p'', q'_2\ra $ and $\rank q''_2 = k''_2 = k'_2 + 1$. Let $m = k'_2$. It follows  from the construction that 
$\pi^{k''_1}_{\sigma_1\# n}(q''_1) =_{p''} \pi^{k''_2}_{\sigma_2\# m}(q''_2)$.
\end{proof}

\begin{corollary}
Let $\phi$ be  an $\st$-$\in$-sentence with parameters from $\bbV$.
Then $\la p,q\ra \Vdash \phi$ iff $\la p,\bar{1}\ra \Vdash \phi$.
\end{corollary}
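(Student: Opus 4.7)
The plan is to obtain this corollary as an immediate application of Lemma~\ref{homogeneity} specialized to the case $s = 0$, i.e.\ to $\st$-$\in$-sentences (formulas with no free variables reserved for the $\dot{G}_n$ symbols). The reduction is essentially a bookkeeping exercise, so no new ideas are needed beyond those already present in the homogeneity lemma.

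First, I would observe that since $\phi$ is a sentence, the tuples $\sigma_1$ and $\sigma_2$ appearing in the hypothesis of Lemma~\ref{homogeneity} must both be the empty tuple $\la\,\ra$. For any $q \in \bbQ$ of rank $k$, the projection $\pi^{k}_{\la\,\ra}$ sends every element of $\bbV^k$ to the unique element $\la\,\ra$ of $\bbV^0$, so that for each $i \in I$ with $q(i) \neq \emptyset$ we have $(q \uh \la\,\ra)(i) = \pi^{k}_{\la\,\ra}[q(i)] = \{\la\,\ra\}$. By the definition of $\bar{1}$ (the unique element of $\bbQ$ of rank $0$, sending each $i$ to $\{\emptyset\}$), this yields $q \uh \la\,\ra = \bar{1}$ identically on $I$, and in particular $q \uh \la\,\ra =_{p} \bar{1} \uh \la\,\ra$.

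Next I would apply Lemma~\ref{homogeneity} directly with $q_1 := q$, $\sigma_1 := \la\,\ra$, $q_2 := \bar{1}$, $\sigma_2 := \la\,\ra$. The conclusion reads $\la p, q \ra \Vdash \phi$ iff $\la p, \bar{1}\ra \Vdash \phi$, which is exactly the corollary. There is no genuine obstacle here: the only subtlety worth noting is that Lemma~\ref{homogeneity} was indeed formulated for arbitrary $\st$-$\in$-formulas (not only $\in$-formulas), so the clause (12) for $\st(\dot{G}_n)$ was already incorporated in its inductive proof, and the application to sentences involving $\st$ is legitimate. Thus the corollary follows at once.
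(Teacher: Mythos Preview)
Your proposal is correct and matches the paper's intended approach: the paper states the corollary immediately after Lemma~\ref{homogeneity} without proof, precisely because it is the $s=0$ instance you describe. Your observation that $q\uh\la\,\ra = \bar{1}$ for any $q\in\bbQ$ (since each $q(i)$ is nonempty) is exactly the verification needed to invoke the lemma.
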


As in Section~\ref{stand}, let $p, p' \in \bbP$ and let $\gamma$ be an increasing mapping of $p'$ onto $p$; we extend $\gamma$ to $I$ by defining $\gamma (a) = 0$ for $a \in I \setminus p$.

\begin{lemma}\label{pullback}
Let $\phi(v_1, \ldots, v_s)$ be an $\st$-$\in$-formula with parameters from $\bbV$. Then
$\la p, q\ra \Vdash \phi(\dot{G}_{n_1}, \ldots, \dot{G}_{n_s})$ iff 
$\la p', q \circ \gamma\ra \Vdash \phi(\dot{G}_{n_1}, \ldots, \dot{G}_{n_s})$.
\end{lemma}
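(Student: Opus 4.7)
The proof will be by induction on the logical complexity of $\phi$. The driving observation is that $\gamma$ restricted to $p'$ is an order-preserving bijection onto $p$, so $\gamma$ induces a bijection between bounded (finite) subsets of $p'$ and bounded subsets of $p$. Consequently, any statement of the form $\forall^{\infy} i \in p\;\chi(i, q(i))$ is equivalent to $\forall^{\infy} i \in p'\;\chi(\gamma(i), (q \circ \gamma)(i))$. Note also that $q \circ \gamma$ has the same rank $k$ as $q$ and lies in $\bbQ$ since $q(\gamma(i))$ is a nonempty subset of $\bbV^k$ for every $i \in I$.

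For the atomic cases (1)--(7) and the $\st$-clauses (11)--(12), each forcing relation is defined by a condition of exactly this form (doubled for two indices in the symmetric version of (12)), and the inner condition depends only on $q(i)$ itself. Hence the observation above delivers each case immediately. The conjunction case (9) is trivial.

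The inductive cases (8) and (10) rest on a natural correspondence between extensions of $\la p, q\ra$ and extensions of $\la p', q \circ \gamma\ra$. Given $\la p'', q''\ra \le \la p, q\ra$ with $\rank q'' = k''$, set $\bar{p} = \gamma^{-1}[p''] \subseteq p'$ and define $\bar{q} = q'' \circ \gamma$, adjusting values on $I \setminus p'$ arbitrarily to keep $\bar{q} \in \bbQ$ of rank $k''$; then $\la \bar{p}, \bar{q}\ra \le \la p', q \circ \gamma\ra$, as the rank is preserved and the ``for almost all $i \in p''$'' compatibility between $q''$ and $q$ transports through $\gamma$ to the analogous compatibility between $\bar{q}$ and $q \circ \gamma$. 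The inverse map sends $\la \bar{p}, \bar{q}\ra$ to $\la \gamma[\bar{p}], \bar{q} \circ (\gamma\uh\bar{p})^{-1}\ra$ (again extended arbitrarily off $\gamma[\bar{p}]$), and one checks that the two maps are mutually inverse and order-preserving on the posets of extensions. Since $\gamma\uh\bar{p}$ is itself an increasing bijection onto $\gamma[\bar{p}]$, the inductive hypothesis applied to the subformula (which has lower complexity) yields $\la \bar{p}, \bar{q}\ra \Vdash \phi$ iff $\la \gamma[\bar{p}], \bar{q} \circ (\gamma\uh\bar{p})^{-1}\ra \Vdash \phi$, and this gives case~(8) directly; case~(10) follows after observing that the same correspondence accommodates the choice of a witness index $m$ for $\dot{G}_m$.

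The main obstacle is the bookkeeping in this extension correspondence, in particular verifying that a bounded exceptional set for the compatibility of $q''$ with $q$ pulls back (under $\gamma^{-1}$) to a bounded exceptional set for the compatibility of $\bar{q}$ with $q \circ \gamma$, and symmetrically. Once the bijection of posets is in place, the logical induction runs without incident.
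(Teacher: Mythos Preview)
Your proof is correct and follows essentially the same approach as the paper. The paper's own proof simply reads ``As for Lemma~\ref{isomorphism},'' whose proof is precisely the induction on complexity you carry out, driven by the observation that $p'' \mapsto p' \cap \gamma^{-1}[p'']$ is an isomorphism between the posets of extensions below $\la p, q\ra$ and below $\la p', q\circ\gamma\ra$; you have spelled out the details of this correspondence (including the $q$-component) more fully than the paper does. One minor slip: in your treatment of cases~(8) and~(10) you write ``$\Vdash \phi$'' where you mean the subformula $\psi$, but the intent is clear.
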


\begin{proof}
As for Lemma~\ref{isomorphism}.
\end{proof}

\begin{corollary}
Let $\phi(v_1,\ldots,v_r)$ be an $\st$-$\in$-formula.  For $z_1,\ldots,z_r \in~\bbV$
$\la p,q\ra \Vdash \phi(z_1,\ldots,z_r)$ iff $\la p', q'\ra \Vdash \phi(z_1,\ldots,z_r)$.
\end{corollary}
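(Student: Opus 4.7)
The plan is to chain together two results already proved in the excerpt: the corollary following Lemma~\ref{homogeneity}, which states $\la p, q\ra \Vdash \phi$ iff $\la p, \bar{1}\ra \Vdash \phi$ for any $\st$-$\in$-sentence $\phi$ with parameters from $\bbV$, and Lemma~\ref{pullback}, which transports forcing of such a formula along an order-preserving map $\gamma$ between conditions in $\bbP$. Once $z_1, \ldots, z_r \in \bbV$ are substituted, $\phi(z_1, \ldots, z_r)$ is precisely such a sentence, so both tools apply.

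First I would invoke the corollary after Lemma~\ref{homogeneity} twice to reduce the problem to the case $q = q' = \bar{1}$, obtaining $\la p, q\ra \Vdash \phi(z_1, \ldots, z_r)$ iff $\la p, \bar{1}\ra \Vdash \phi(z_1, \ldots, z_r)$ and, symmetrically, $\la p', q'\ra \Vdash \phi(z_1, \ldots, z_r)$ iff $\la p', \bar{1}\ra \Vdash \phi(z_1, \ldots, z_r)$. Next, since $p$ and $p'$ are both unbounded subsets of $I = \bbN$, both have order type $\omega$, and hence an increasing bijection $\gamma \colon p' \to p$ exists; extend it by $\gamma(a) = 0$ on $I \setminus p'$ as in the setup preceding Lemma~\ref{pullback}. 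Crucially, $\bar{1}$ is the constant function with value $\{\emptyset\}$, so $\bar{1} \circ \gamma = \bar{1}$. Applying Lemma~\ref{pullback} to $\phi(z_1, \ldots, z_r)$ (with the empty list of $\dot{G}_n$-constants, i.e.\ the case $s = 0$) then yields $\la p, \bar{1}\ra \Vdash \phi(z_1, \ldots, z_r)$ iff $\la p', \bar{1}\ra \Vdash \phi(z_1, \ldots, z_r)$. Chaining the three equivalences completes the proof.

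Because the two ingredients have been established in full generality in the preceding lemmas, there is essentially no substantive obstacle here; the corollary is really just the observation that killing $q$-dependence (via homogeneity) and killing $p$-dependence (via the order-isomorphism pullback) can be performed independently. The only minor checks are that $\bar{1} \circ \gamma = \bar{1}$, which is immediate from the constancy of $\bar{1}$, and that Lemma~\ref{pullback} is formulated permissively enough to accept $s = 0$, which it is. No induction on the complexity of $\phi$ is required at this stage.
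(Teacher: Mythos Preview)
Your proposal is correct and matches the paper's intended argument: the corollary is stated without proof precisely because it follows by combining the corollary to Lemma~\ref{homogeneity} (to eliminate dependence on $q$) with Lemma~\ref{pullback} applied to $\bar{1}$ (to eliminate dependence on $p$), exactly as you describe. The only checks needed are that $\bar{1}\circ\gamma=\bar{1}$ and that an increasing bijection $\gamma\colon p'\to p$ exists for unbounded $p,p'\subseteq\bbN$, both of which you handle.
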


\begin{proposition}\label{extsn2}
The structure $\widehat{\fN} = (N, \in^{\ast} , M)$ satisfies the principle  of Standardization for $\st$-$\in$-formulas with no parameters.
\end{proposition}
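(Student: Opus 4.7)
The plan is to adapt the short argument used for Proposition~\ref{extsn} to the more elaborate forcing $\bbH$. Given a parameter-free $\st$-$\in$-formula $\phi(v)$ and a standard set $A \in M$, I would work in $\cM$ and define
$$S = \{\, x \in A \mid \la I,\bar{1}\ra \Vdash \phi(\check{x})\,\}.$$
Because $\phi$ is a fixed (metatheoretic) formula, the relation ``$\la p,q\ra \Vdash \phi(v)$'' is captured in $\cM$ by a single $\in$-formula obtained by meta-induction on the complexity of $\phi$ (the new atomic clauses (11) and (12) for $\st$ are plainly $\in$-definable, as are the cases already used in Definition~\ref{forcing}). Hence $S \in M$ by Separation, so $\widehat{\fN} \vDash \st(S)$.

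It remains to show that for every $x \in M$, $\widehat{\fN} \vDash (x \in S \eqi x \in A \wedge \phi(x))$. Since $j$ is elementary (Corollary~\ref{elememb}), the conditions $x \in S$ and $x \in A$ translate without change between $\cM$ and $\widehat{\fN}$, so the real content is the equivalence
$$\cM \vDash\ \la I,\bar{1}\ra \Vdash \phi(\check{x}) \qquad \text{iff} \qquad \widehat{\fN} \vDash \phi(x).$$
The corollary following Lemma~\ref{pullback} says that for standard parameters the truth value of $\la p,q\ra \Vdash \phi(\check{x})$ does not depend on $\la p,q\ra$. Combined with Lemma~\ref{basic1}(3), this forces a dichotomy: for each $x$ either every condition forces $\phi(\check{x})$ or every condition forces $\neg\phi(\check{x})$. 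Consequently, $\la I,\bar{1}\ra \Vdash \phi(\check{x})$ iff some (equivalently, every) condition of $\cG$ forces $\phi(\check{x})$. Invoking a Forcing Theorem for $\st$-$\in$-formulas, extending Proposition~\ref{fteu}, then gives the desired equivalence with $\widehat{\fN} \vDash \phi(x)$.

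The main obstacle, and the only step that is not directly stated earlier, is the extension of Proposition~\ref{fteu} from $\in$-formulas to $\st$-$\in$-formulas. The inductive cases for $\wedge$, $\neg$, and $\exists$ go through as in the original proof (the $\neg$ case using the homogeneity corollary to conclude that no condition in $\cG$ can force the negated formula, since none forces it from the outset). The genuinely new work is the two atomic cases: (11) is immediate since $\widehat{\fN} \vDash \st(j(z))$ for every $z \in M$ and by definition $\la p,q\ra \Vdash \st(z)$ for all $\la p,q\ra$; for (12), $\widehat{\fN} \vDash \st(G_n)$ means $G_n = j(z)$ for some $z \in M$, which by Lemma~\ref{fixf} and the definition of $j$ is equivalent to the existence of $\la p,q\ra \in \cG$ forcing $\dot{G}_n = \check{z}$, and via Proposition~\ref{Los} this unfolds to exactly the constancy-almost-everywhere clause in (12). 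Once these atomic cases are verified, the proof is complete.
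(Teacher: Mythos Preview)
Your proposal is correct and follows the paper's approach: the paper's two-sentence proof asserts exactly the equivalence $\widehat{\fN} \vDash \phi(z) \Leftrightarrow \la p,q\ra \Vdash \phi(\check{z})$ for all $\la p,q\ra$, relying on the corollary to Lemma~\ref{pullback} together with an (unstated) extension of the Fundamental Theorem~\ref{fteu} to $\st$-$\in$-formulas, and then notes the right side is $\in$-definable. You have made explicit precisely the step the paper leaves implicit---the extension of Proposition~\ref{fteu} to the new atomic clauses (11) and (12)---and your verification of those cases is correct; one small remark is that your parenthetical about invoking homogeneity in the $\neg$ case is unnecessary, since the argument from the original proof of Proposition~\ref{fteu} (via Lemma~\ref{basic1}(3) and genericity) already goes through unchanged.
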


\begin{proof}
For standard $z$, $\widehat{\fN} \vDash \phi(z)$  iff  $\la p, q\ra \Vdash \phi(z)$
for every $\la p, q\ra \in \bbH$. The right side is expressible by an $\in$-formula.
\end{proof}

\emph{This completes the proof of Theorem \textbf{B}.}

\bigskip
There is yet another principle that can be added to $\SPOT$ and keep it conservative over $\ZF$.
One of its important consequences is the impossibility to uniquely specify an infinitesimal.\\

$\UP$ (Uniqueness Principle) Let $\phi(v)$ be an $\st$-$\in$-formula with standard parameters.
If there exists a unique $x$ such that $\phi(x)$, then this $x$ is standard. 

\begin{theorem}
$\SPOT  + \UP$ is a conservative extension of $\ZF$.
\end{theorem}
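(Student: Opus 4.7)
The plan is to parallel the proof of Theorem~\textbf{B} (Proposition~\ref{extsn2}) and show that the extended ultrapower $\widehat{\fN}$ of Section~\ref{stand}, with the forcing enriched by clauses~(11)--(12) of Section~\ref{forcingexternal}, satisfies $\UP$; the conservativity conclusion then follows from Proposition~\ref{countablemodels} exactly as Theorem~\ref{TheoremA} does. I would first verify that the Fundamental Theorem of Extended Ultrapowers extends to $\st$-$\in$-formulas under the enriched forcing: if $\la p,q\ra \Vdash \st(\dot G_m)$ with witness $x$, then by clause~(5) it already forces $\dot G_m = \check x$, so $\widehat{\fN} \vDash \st(G_m)$ iff some $\la p,q\ra \in \cG$ forces $\st(\dot G_m)$.

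The main work is a forcing argument by contradiction. Suppose $\phi(v)$ has standard parameters, $\widehat{\fN} \vDash \exists! x\,\phi(x)$, and $\widehat{\fN} \vDash \phi(G_m) \wedge \neg\st(G_m)$. Choose $\la p,q\ra \in \cG$ forcing all three, and use Lemma~\ref{homogeneity} with $\sigma=\la m\ra$ to reduce to $\rank q = 1$ and $m = 0$. The key consequence of $\la p,q\ra \Vdash \neg\st(\dot G_0)$ that I would extract is: for every $x \in \bbV$, the set $P_x = \{i \in p \mid \la x\ra \in q(i)\}$ is bounded; otherwise the condition $\la P_x, q_x\ra$ with $q_x(i) = \{\la x\ra\}$ on $P_x$ would extend $\la p,q\ra$ and force $\st(\dot G_0)$ with witness $x$.

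The heart of the proof is the construction of an extension $\la p',q'\ra$ of $\la p,q\ra$ with $\rank q' = 2$ forcing $\phi(\dot G_0) \wedge \phi(\dot G_1) \wedge \dot G_0 \neq \dot G_1$, via a case split. In the first case, if $B = \{i \in p \mid |q(i)| \ge 2\}$ is unbounded, set $p' = B$ and $q'(i) = \{\la x,y\ra \mid \la x\ra,\la y\ra \in q(i),\ x \neq y\}$ for $i \in p'$; then $q'\uh\la 0\ra =_{p'} q'\uh\la 1\ra =_{p'} q$, so two applications of Lemma~\ref{homogeneity} transfer $\la p,q\ra \Vdash \phi(\dot G_0)$ to both $\la p',q'\ra \Vdash \phi(\dot G_0)$ and $\la p',q'\ra \Vdash \phi(\dot G_1)$. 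Otherwise $q(i) = \{\la x_i\ra\}$ for almost all $i \in p$, and because each $P_x$ is bounded a recursion inside $\cM$ produces a strictly increasing sequence $\la j_k \mid k \in \bbN\ra$ in $p$ with the $x_{j_k}$ pairwise distinct; then set $p' = \{j_{2k} \mid k \in \bbN\}$ and $q'(j_{2k}) = \{\la x_{j_{2k}}, x_{j_{2k+1}}\ra\}$. Here $q'\uh\la 0\ra =_{p'} q$ yields $\la p',q'\ra \Vdash \phi(\dot G_0)$ directly, while $q'\uh\la 1\ra$ coincides on $p'$ with $q\circ\gamma$ for the order-preserving map $\gamma\colon j_{2k}\mapsto j_{2k+1}$; Lemma~\ref{pullback} applied to $\gamma$ transports $\phi(\dot G_0)$ from $\la\{j_{2k+1}\mid k\in\bbN\},q\ra$ to $\la p',q\circ\gamma\ra$, after which Lemma~\ref{homogeneity} delivers $\la p',q'\ra \Vdash \phi(\dot G_1)$. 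In either case every $\la x,y\ra \in q'(i)$ satisfies $x \neq y$, so Proposition~\ref{Los} applied to the first two coordinates forbids any extension from forcing $\dot G_0 = \dot G_1$, whence $\la p',q'\ra \Vdash \dot G_0 \neq \dot G_1$.

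To close, $\la p,q\ra \Vdash \exists! x\,\phi(x)$ passes to $\la p',q'\ra$ by Lemma~\ref{basic1}(1), giving $\la p',q'\ra \Vdash \neg\exists x\exists y\,(x \neq y \wedge \phi(x) \wedge \phi(y))$; yet $\la p',q'\ra$ also forces this same existential sentence with witnesses $\dot G_0$ and $\dot G_1$, contradicting Lemma~\ref{basic1}(2). The main obstacle is the second case of the construction: the recursion producing $\la j_k\ra$ must be executed inside $\cM$ using only $\cM$-boundedness of each $P_x$, and the chain Lemma~\ref{pullback}--Lemma~\ref{homogeneity} has to be arranged so that the order-preserving map $\gamma$ moves the name $\dot G_0$ across without introducing any nonstandard parameter into $\phi$.
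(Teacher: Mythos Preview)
Your argument is correct and follows the same overall strategy as the paper: assume a nonstandard unique witness, show that for each value $x$ the set of indices where $x$ appears is bounded, then manufacture a condition forcing two distinct witnesses via Lemmas~\ref{homogeneity} and~\ref{pullback}, contradicting uniqueness. The paper's execution differs from yours in two organizational respects. First, the paper does not reduce to rank $1$; it keeps the original rank $k$, works with the projection $r = q\uh\{m\}$, and at the end amalgamates two copies into a condition of rank $2k$. Your upfront reduction to rank $1$ via Lemma~\ref{homogeneity} is legitimate and streamlines the bookkeeping. Second, instead of your case split the paper gives a single recursive construction (its Claim~2): using that each $p_x$ has a greatest element $i_x$, it defines a sequence $n_0 < n_1 < \cdots$ in $p$ together with nonempty $s(n_\ell)\subseteq r(n_\ell)$ so that the $s(n_\ell)$ are pairwise disjoint, then takes $p_1 = \{n_{2\ell}\}$, $p_2 = \{n_{2\ell+1}\}$ and always uses the order-isomorphism $\gamma\colon p_1\to p_2$ and Lemma~\ref{pullback}. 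Your Case~2 is exactly this construction specialized to singletons; your Case~1 is a genuine shortcut the paper does not take, since when $|q(i)|\ge 2$ on an unbounded set you get both projections equal to $q$ on the same $p'$ and can skip the pullback entirely. Either packaging works; the paper's unified Claim~2 avoids the split at the cost of invoking Lemma~\ref{pullback} even when it is not strictly needed.
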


\begin{proof}
If $\widehat{\fN} \vDash \exists x\, [\phi(x) \,\wedge\, \forall y\, (\phi(y) \imp y = x)\,\wedge\, \neg \st(x)]$, then there is $\la p, q \ra \in \cG$ and $m < k = \rank q$ such that 
$\la p,q\ra \Vdash \phi(\dot{G}_m) \,\wedge\, \neg \st(\dot{G}_m) \,\wedge\, 
\forall y\, (\phi(y) \imp y = \dot{G}_m)$.

Let $r = q \uh \{ m \}$, i.e., for all $i \in I$, 
$x \in r(i) \eqi \exists \la x_0,\ldots, x_{k-1} \ra \in q(i)\, ( x_m = x)$.

\emph{Claim 1.}
\emph{For every $x \in \bigcup_{i \in p} r(i)$  the set $p_x = \{ i \in p \,\mid\, x \in r(i)\}$ is bounded; we let $i_x$ 
denote its greatest element.}

\emph{Proof of Claim 1.}
Let $x$ be such that $p_x$ is unbounded.  Define
\begin{align*}
q_x(i) &= \{ \la x_0,\ldots, x_{k-1} \ra \in q(i) \,\mid\,( x_k = x ) \}\text{ for }i \in p_x; \\
&= \{\emptyset_k\} \text{ otherwise}.
\end{align*}
Then $\la p_x, q_x \ra \le \la p, q \ra $ and  $\la p_x, q_x \ra \Vdash \dot{G}_m = x$, i.e., 
  $\la p_x, q_x \ra \Vdash \st(\dot{G}_m)$, a contradiction.  \qed

\emph{Claim 2.}
\emph{There exist unbounded mutually disjoint sets $p_1, p_2 \subset p$ and nonempty sets $ s(i) \subseteq r(i)$ for all $i \in p_1 \cup p_2$ such that }\\
$\left( \bigcup_{i \in p_1} s(i) \right)  \cap \left( \bigcup_{i \in p_2} s(i) \right) = \emptyset.$

We postpone the proof of Claim 2 and complete the proof of  the theorem.

We let $\widetilde{q}(i) = \{ \la x_0,\ldots, x_{k-1} \ra \in q(i) \,\mid\, x_k \in s(i) \} $ for $i \in p_1 \cup p_2$,  $\widetilde{q}(i) = \{ \emptyset_k\}$ otherwise.
We have $\la p_1, \widetilde{q} \ra\le \la p, q \ra$, $\la p_2, \widetilde{q} \ra\le \la p, q \ra$, and consequently $\la p_1, \widetilde{q} \ra \Vdash \phi(\dot{G}_m)$, $\la p_2, \widetilde{q} \ra \Vdash \phi(\dot{G}_m)$.

Let $\gamma$ be an increasing mapping of $p_1$ onto $p_2$ extended by $\gamma (i) =0$ for $i \in I \setminus p_1$.
By Lemma~\ref{pullback} $\la p_1, \widetilde{q} \circ \gamma\ra \Vdash \phi(\dot{G}_m)$.
We ``amalgamate''  $\la p_1, \widetilde{q} \ra $ and $\la p_1, \widetilde{q} \circ \gamma \ra $
 to form a condition of rank $2k$ as follows: \qquad
$\la x_0,\ldots,x_{k-1}, y_0,\ldots,y_{k-1} \ra \in \widehat{q}(i) \eqi$
$$\la x_0,\ldots,x_{k-1}\ra \in \widetilde{q}(i)\, \wedge \,
\la y_0,\ldots,y_{k-1}\ra \in \widetilde{q}(\gamma(i)) $$
and observe that $\la p_1, \widehat{q} \ra \le \la p , \widetilde{q} \ra$.
Let $\sigma_1 = k$ and $\sigma_2 = \{ k+\ell \,\mid\, \ell < k\}$. 
We have $\widehat{q} \uh \sigma_1 = \widetilde{q}$ and $\widehat{q} \uh \sigma_2 = \widetilde{q} \circ \gamma$. By Lemma~\ref{homogeneity} $\la p_1, \widehat{q}\ra \Vdash \phi(\dot{G}_m) \,\wedge\, \phi(\dot{G}_{k+m})$. But $x_m \neq y_m$ holds for all $\la x_0,\ldots,x_{k-1}, y_0,\ldots,y_{k-1} \ra \in \widehat{q}(i) $ and all $i \in p_1$, so  $\la p_1, \widehat{q}\ra \Vdash \dot{G}_m \neq \dot{G}_{k +m}$.
This contradicts $\la p_1, \widehat{q}\ra \Vdash \forall y\, (\phi(y) \imp y = \dot{G}_m).$

\emph{Proof of Claim 2.}
W.l.o.g. we can assume $p = I = \bbN$ (map $p$ onto $\bbN$ in an increasing way).
Define sequences $\la n_{\ell} \,\mid\, \ell \in \bbN \ra$, $\la \alpha_{\ell} \,\mid\, \ell \in \bbN \ra$
and $\la s(n_{\ell})\,\mid\, \ell \in \bbN \ra$ by recursion as follows:

Let $n_0 = 0$, $\alpha_0 = \min \{ i_x \,\mid\, x \in r(0)\}$ and  
$s(n_0) = s(0) = \{ x \in r(0) \,\mid\, i_x = \alpha_0 \}$.

At stage $\ell + 1$ let $n_{\ell + 1} = \alpha_{\ell} + 1$, 
$\alpha_{\ell +1} = \min \{ i_x \,\mid\, x \in r(n_{\ell+1} )\}$       and
$s(n_{\ell + 1} ) = \{ x \in r(n_{\ell+1}) \,\mid\, i_x = \alpha_{\ell + 1} \}$.

We observe that $s(n_{\ell}) \cap s(n_{\ell'}) = \emptyset$ for all $\ell \neq \ell'$.
It remains to let $p_1 = \{ n_{2\ell} \,\mid\, \ell \in \bbN\}$ and 
$p_2 = \{ n_{2\ell + 1} \,\mid\, \ell \in \bbN\}$.   \qed
\end{proof}


\section{Idealization}
\label{idealization}

We recall the axioms of the theory $\BST$; see the references Kanovei and Reeken~\cite{KR} and Fletcher et al.~\cite{F} for motivation and more detail.
In addition to the axioms of $\ZFC$, they are:\\

$\B$  (Boundedness)  $\quad \forall x \, \exists^{\st} y \,(x \in y)$.\\

$\T$ (Transfer) 
Let $\phi (v)$ be an $\in$-formula with standard parameters. Then
$$\forall^{\st} x\; \phi(x)  \imp  \forall x\; \phi(x).$$

$\mathbf{S}$ (Standardization)  Let $\phi (v)$ be an $\st$-$\in$-formula with arbitrary parameters. Then
$$\forall^{\st} A\, \exists^{\st}  S\; \forall^{\st} x\; (x \in S \eqi x \in A \,\wedge\,\phi(x)). $$

$\mathbf{BI}$  (Bounded Idealization)
Let $\phi$ be an $\in$-formula with arbitrary parameters. For every set $A$
\[
\forall^{\st\fin} a \subseteq A \;\exists y\; \forall x \in a\;
\phi(x,y) \eqi \exists y \; \forall^{\st} x \in A \; \phi(x,y).
\]

\subsection{Idealization over uncountable sets}
In order to obtain models with Bounded Idealization, the construction of Subsections~\ref{SS1} and~\ref{extult} needs to be generalized from $I =\bbN$ to $I = \cP^{\fin} (A)$, where $A$ is any infinite set. The key is the right definition of  ``unbounded'' subsets of $I$. We work in $\ZF$.

We use the notation $\cP^{\le m} (A)$ for $\{ a \in \cP^{\fin}(A) \,\mid\, |a| \le m\}$.

\begin{definition}
A set $p \subseteq  \cP^{\fin} (A)$ is \emph{thick} if
$$\forall m \in \bbN\, \exists n \in \bbN \,\forall a \in \cP^{\le m} (A)\, \exists b \in p \cap \cP^{\le n} (A)  \,(a \subseteq b).$$
We let $\nu_p(m)$  denote the least $n$ with this property.
The set $p$ is \emph{thin} if it is not thick.
\end{definition}
Clearly $\{ a \in \cP^{\fin}(A) \,\mid\, x \in a \}$ is thick for
every $x \in A$ (with $\nu_p(m) = m+1$).  We now carry out the
developments of Subsections~\ref{SS1} and~\ref{extult} with
\emph{unbounded} and \emph{bounded} replaced by \emph{thick} and
\emph{thin}, respectively.  The definition of forcing and proofs of
Lemmas~\ref{basic1} and~\ref{fixf} are as before.  The following
observation enables the proof of Proposition~\ref{Los} to go through
as well.
\begin{lemma}
\label{union}
If $p$ is thick and $S \subseteq \cP^{\fin}(A)$, then either $p \cap
S$ or $p \setminus S$ is thick.
\end{lemma}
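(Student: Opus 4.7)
The plan is to prove the contrapositive: assuming that both $p \cap S$ and $p \setminus S$ are thin, I will show that $p$ itself is thin. The negation of thickness unpacks to: there exists an $m \in \bbN$ such that for every $n \in \bbN$ there is some $a \in \cP^{\le m}(A)$ with no superset in the given set of size $\le n$. So the assumption gives me witnessing numbers $m_1$ (for $p \cap S$) and $m_2$ (for $p \setminus S$), and for each $n \in \bbN$, sets $a_n^{(1)} \in \cP^{\le m_1}(A)$ and $a_n^{(2)} \in \cP^{\le m_2}(A)$ such that no element of $(p \cap S) \cap \cP^{\le n}(A)$ contains $a_n^{(1)}$ and no element of $(p \setminus S) \cap \cP^{\le n}(A)$ contains $a_n^{(2)}$.

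I claim that $p$ is then thin with witness $m := m_1 + m_2$. Fix any $n \in \bbN$ and set $a_n := a_n^{(1)} \cup a_n^{(2)} \in \cP^{\le m}(A)$. Suppose for contradiction that some $b \in p \cap \cP^{\le n}(A)$ satisfies $a_n \subseteq b$. Then either $b \in S$, in which case $b \in (p \cap S) \cap \cP^{\le n}(A)$ and $a_n^{(1)} \subseteq a_n \subseteq b$, contradicting the choice of $a_n^{(1)}$; or $b \notin S$, in which case $b \in (p \setminus S) \cap \cP^{\le n}(A)$ and $a_n^{(2)} \subseteq a_n \subseteq b$, contradicting the choice of $a_n^{(2)}$. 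Either way we have a contradiction, so $p$ is thin, completing the contrapositive.

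There is no real obstacle here; the only point requiring minor care is the quantifier structure of the negation of thickness (the ``bad'' $a$ is allowed to depend on $n$, while the bound $m$ on its size is uniform), since an incorrect reading would either strengthen or weaken the hypothesis. A further pleasant feature is that the argument needs no choice principle: for each fixed $n$ we make a single existential selection of the pair $(a_n^{(1)}, a_n^{(2)})$, and the statement that $p$ is thin is itself only a $\forall n\,\exists a$ assertion, so no simultaneous sequence of choices must be assembled.
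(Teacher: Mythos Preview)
Your proof is correct and follows essentially the same approach as the paper: both argue by contradiction/contrapositive, take $m = m_1 + m_2$, union the two witnessing sets, and case-split on whether a putative $b \in p$ lies in $S$ or not. Your additional remark about avoiding choice is accurate and worth noting, since the ambient theory is $\ZF$.
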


\begin{proof}
Otherwise there is $m_1$ such that 
\begin{equation} \forall n \in \bbN \,\exists a_1 \in \cP^{\le m_1} (A)\, \forall b \in  (p \cap S) \cap\cP^{\le n} (A)  \, (a_1 \nsubseteq b) \tag{*}
\end{equation}
and there is $m_2$ such that 
\begin{equation} \forall n \in \bbN \,\exists a_2 \in \cP^{\le m_2} (A)\, \forall b \in (p \setminus S) \cap \cP^{\le n} (A)   \, (a_2 \nsubseteq b).\tag{**}
\end{equation}
Let $m_1, m_2$ be as above, and let $m = m_1 + m_2$. Since $p$ is thick, 
\begin{equation}  \exists n \in \bbN \,\forall a \in \cP^{\le m} (A)\, \exists b \in p \cap \cP^{\le n} (A)  \,(a \subseteq b). \tag{***}\end{equation}
Fix such $n$; for $a_1 \in \cP^{\le m_1} (A)$ such that $\forall b \in (p \cap S)  \cap \cP^{\le n} (A) \, (a_1 \nsubseteq b)$ and  $a_2 \in \cP^{\le m_2} (A)$  such that $\forall b \in (p \setminus S)  \cap \cP^{\le n} (A)  \, (a_2 \nsubseteq b)$ we have
 $a_1 \cup a_2 \in \cP^{\le m} (A)$.  By (***) there is $b\in p \cap \cP^{\le n} (A)$ such that $a_1 \cup a_2 \subseteq b$. Depending on whether $b \in p \cap S$ or $b \in p\setminus S$, this contradicts (*) or (**).
\end{proof}

The next lemma enables a generalization of  Lemma~\ref{decideN}.  

\begin{lemma}\label{countableclosure}
Let $\la p_n \,\mid\, n \in \bbN \ra$ be such that, for all $n$, $p_n \in \bbP$ and $p_n \supseteq p_{n+1}$.
Then there is $p \in \bbP$ with the property that for every $n$ there is $k \in \bbN$ such that $\forall a \in (p \setminus p_n) \, ( |a| \le k)$. In particular, $p \setminus p_n$  is thin.
\end{lemma}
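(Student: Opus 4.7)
The plan is to construct $p$ by a diagonal argument that carves each $p_n$ into a size-controlled ``layer'' and takes the union. The key is to choose the layer boundaries to grow fast enough that thickness is preserved, while also ensuring that any element that witnesses thickness at some stage is assigned to a layer of index $\le n$, so that the decreasing nature of $\langle p_n \rangle$ makes the assignment consistent.

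Concretely, I would recursively define a strictly increasing sequence $k_0 < k_1 < k_2 < \ldots$ of natural numbers by $k_0 = 0$ and $k_{n+1} = \nu_{p_n}(k_n) + 1$, where $\nu_{p_n}$ is the function from the definition of ``thick'' applied to $p_n$. For each $b \in \mathcal{P}^{\fin}(A)$ with $|b| \ge k_0$, let $n(b)$ be the unique $n$ with $k_n \le |b| < k_{n+1}$. Then set
\[
p = \{ b \in \mathcal{P}^{\fin}(A) \mid b \in p_{n(b)} \}.
\]

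To verify that $p$ is thick, given $m \in \mathbb{N}$, I would pick $N$ with $k_N \ge m$. For any $a \in \mathcal{P}^{\le m}(A) \subseteq \mathcal{P}^{\le k_N}(A)$, thickness of $p_N$ gives $b \in p_N$ with $a \subseteq b$ and $|b| \le \nu_{p_N}(k_N) = k_{N+1} - 1 < k_{N+1}$. Hence $n(b) \le N$, and since the sequence $\langle p_n \rangle$ is decreasing, $b \in p_N \subseteq p_{n(b)}$, so $b \in p$; this shows $\nu_p(m) \le k_{N+1}$, so $p \in \mathbb{P}$.

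For the bounded-difference clause, suppose $b \in p \setminus p_n$. Then $b \in p_{n(b)}$ but $b \notin p_n$, which forces $n(b) < n$ (otherwise $b \in p_{n(b)} \subseteq p_n$). Therefore $|b| < k_{n(b)+1} \le k_n$, and we get $p \setminus p_n \subseteq \mathcal{P}^{< k_n}(A)$, which is the required bound (take $k = k_n - 1$, and note that $p \setminus p_n$ is thin since it is contained in a set of uniformly bounded size, contradicting the definition of ``thick'').

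The main technical point to get right is the strict inequality in $k_{n+1} = \nu_{p_n}(k_n) + 1$: with non-strict inequality the witness $b$ produced at stage $N$ could have $|b| = k_{N+1}$, placing $n(b) = N+1$, in which case $b \in p_N$ would not be enough to conclude $b \in p_{n(b)}$. Everything else is bookkeeping about the layer intervals $[k_n, k_{n+1})$ and the monotonicity $p_n \supseteq p_{n+1}$.
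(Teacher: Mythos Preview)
Your proof is correct and follows essentially the same diagonal idea as the paper: slice each $p_m$ by a size bound and take the union. The paper's bookkeeping is slightly leaner---it simply sets $p = \bigcup_{m} \{\, a \in p_m : |a| \le \nu_{p_m}(m)\,\}$, avoiding the recursive definition of the $k_n$ and the ``$+1$'' adjustment, but the underlying argument is the same.
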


\begin{proof}
Define $p = \bigcup_{m=0}^{\infty}  \{ a \in p_m \,\mid\, |a| \le \nu_{p_m}(m)\}$.
Since $p \setminus p_n \subseteq \bigcup_{m=0}^{n-1}  \{ a \in p_m \,\mid\, |a| \le \nu_{p_m}(m)\}$,
$a \in p \setminus p_n$ implies $|a| \le k$ for $k =\max\{  \nu_{p_0}(0),\ldots,\nu_{p_{n-1}}(n-1)  \}$.

We show that $p$ is thick. Given $m \in \bbN$, we let $n = \nu_{p_m} (m)$.
If $a \in \cP^{\fin}(A)$ and $|a| \le m$, then there is $b \in p_m$ such that $a \subseteq b$ and 
$|b| \le n$. By the definition of $p$, $b\in p$. So $n$ has the required property.
\end{proof}

With these changes, the rest of the development of Subsections~\ref{SS1} and~\ref{extult} goes through and establishes the following strengthening of Proposition~\ref{ext}.

\begin{proposition} \label{bsptmodels}
Assume that $\cM \vDash `` I = \cP^{\fin} (A) \,\wedge\, A\text{ is infinite}$''.
The structure $\widehat{\fN}_A = (N_A, \in^{\ast} , M)$ constructed for this $I$ satisfies the principles of Transfer, Nontriviality, Boundedness, Standard Part, and Bounded Idealization over $A$ for $\in$-formulas with standard parameters.
\end{proposition}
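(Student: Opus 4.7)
The strategy is to observe that the forcing machinery of Subsections~\ref{SS1} and~\ref{extult} generalizes verbatim once ``thick''/``thin'' are substituted for ``unbounded''/``bounded'' throughout. Lemma~\ref{union} supplies the dichotomy required in case~(8) of the proof of Proposition~\ref{Los} (``\L o\'{s}'s Theorem''), replacing the trivial observation that a union of two bounded subsets of $\bbN$ is bounded; and Lemma~\ref{countableclosure} replaces the ``least element'' amalgamation of the sequence $\la \la p_n, q_n\ra\ra$ used at the end of the proof of Lemma~\ref{decideN}. Once Proposition~\ref{Los}, Lemma~\ref{decideN}, and Proposition~\ref{fteu} are reestablished in the new setting, Transfer, Nontriviality, and Standard Part follow exactly as in the proof of Proposition~\ref{ext}.

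For Boundedness, fix $G_m \in N_A$ and choose any $\la p, q\ra \in \cG$ with $\rank q = k > m$. The set $y = \{x_m \mid \exists i \in p\, \exists \la x_0,\ldots,x_{k-1}\ra \in q(i)\}$ exists in $\cM$ by Separation and Replacement, hence is standard in $\widehat{\fN}_A$. Proposition~\ref{Los} gives $\la p, q\ra \Vdash \dot{G}_m \in y$, and Proposition~\ref{fteu} yields $\widehat{\fN}_A \vDash G_m \in y$.

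The substantive new content is Bounded Idealization over $A$. Suppose $\widehat{\fN}_A \vDash \forall^{\st\fin} a \subseteq A\, \exists y\, \forall x \in a\, \phi(x,y,\vec{z})$ with $\vec{z}$ standard. Applying Transfer to the $\in$-formula $\exists y\, \forall x \in a\, \phi(x,y,\vec{z})$ and relativizing to $\cM$, one obtains $\cM \vDash \forall a \in \cP^{\fin}(A)\, \exists y\, \forall x \in a\, \phi(x,y,\vec{z})$. By the Reflection Principle in $\cM$, fix $V_\alpha$ so that a witness $y \in V_\alpha$ may always be chosen. Given any $\la p_0, q_0\ra$ of rank $k_0$, define $q'$ of rank $k_0 + 1$ by
\[
q'(i) = \{\la x_0,\ldots,x_{k_0-1}, y\ra \mid \la x_0,\ldots,x_{k_0-1}\ra \in q_0(i) \,\wedge\, y \in V_\alpha \,\wedge\, \forall x \in i\, \phi(x,y,\vec{z})\}.
\]
Reflection ensures $q'(i) \neq \emptyset$ for every $i \in I$, so $\la p_0, q'\ra \le \la p_0, q_0\ra$, and this extension can be performed within a fixed set in $\cM$. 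By $\cM$-genericity there thus exist $\la p, q''\ra \in \cG$ and $m < \rank q''$ such that $\forall i \in p\, \forall \bar{x} \in q''(i)\, \forall x \in i\, \phi(x, x_m, \vec{z})$.

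Finally, for each standard $x \in A$, the set $p_x = \{i \in I \mid x \in i\}$ is thick. Lemma~\ref{union} yields that either $p \cap p_x$ or $p \setminus p_x$ is thick, and the latter is ruled out by instantiating thickness at $a = \{x\}$ (any witness $b \supseteq \{x\}$ would have $x \in b$, contradicting $b \in p \setminus p_x$); hence $p \cap p_x$ is thick. Therefore $\la p \cap p_x, q''\ra$ is a condition refining $\la p, q''\ra$ which, by Proposition~\ref{Los}, forces $\phi(\check{x}, \dot{G}_m, \vec{z})$. The set of such forcing conditions is thus dense below $\la p, q''\ra$, so by genericity meets $\cG$, and Proposition~\ref{fteu} gives $\widehat{\fN}_A \vDash \phi(x, G_m, \vec{z})$ for every standard $x \in A$, making $G_m$ the required witness for $\exists y\, \forall^{\st} x \in A\, \phi(x,y,\vec{z})$. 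The crux of the whole argument is the thickness-preservation step for $p \cap p_x$: this is exactly what allows a single condition in $\cG$ to simultaneously witness the target formula at \emph{every} standard element of $A$, bypassing the appeal to $\ACC$ that an ordinary ultrapower construction would require.
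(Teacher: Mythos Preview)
Your proof is correct and follows essentially the same route as the paper's. One point deserves tightening: in the final step of Bounded Idealization you pass to the refinement $\langle p \cap p_x, q''\rangle$ and then invoke density below $\langle p, q''\rangle$ and genericity; but exhibiting a single extension that forces $\phi(\check{x}, \dot{G}_m, \vec{z})$ does not by itself establish density. The detour is in any case unnecessary: once you have shown that $p \setminus p_x$ is thin, \L o\'{s}'s Theorem (whose hypothesis reads ``$\forall^{\infy} i \in p$'', i.e., for all $i \in p$ outside a thin set) gives directly that $\langle p, q''\rangle$ \emph{itself} forces $\phi(\check{x}, \dot{G}_m, \vec{z})$ for every $x \in A$, and this condition is already in $\cG$ --- this is exactly how the paper argues. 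You also omit the easy converse implication of Bounded Idealization, which the paper notes follows from $\SP$ (via the analog of Lemma~\ref{downclosed}).
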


\begin{proof}
To prove that $\widehat{\fN}_A \vDash \mathbf{O}$ one can take $d \in M$ such that 
$\cM \vDash ``d$ is a function on $ I=\cP^{\fin}(A) \,\wedge\, \forall a \in \cP^{\fin}(A)\, ( d(a) = |a| )$''.
Nontriviality also follows from Bounded Idealization.

 Let $G_m \in N$ and let $\la p, q \ra \in \cG$ have $\rank  q = k > m$.
There is some $X \in M$ such that $\cM \vDash ``\forall i \in I\, 
\forall \la x_0,\ldots, x_{k-1}\ra \in q(i)\,  (x_{m} \in X)$.''
By Proposition~\ref{Los} $\la p, q \ra \Vdash \forall v \, (v \in \dot{G}_m \imp v \in \check{X})$, so 
$\forall v \in G_m \, (v \in X)$ holds in $\fN_A $. This proves Boundedness in $\widehat{\fN}_A $.

It remains to prove that Bounded Idealization over $A$ holds in $\widehat{\fN}_A $.
Let $\phi(u,v)$ be an $\in$-formula with parameters from $M$. Assume that $\cM \vDash \forall a \in \cP^{\fin}(A)\, \exists y\, \forall x \in a\, \phi(x,y)$. 
By the Reflection Principle  in $\ZF$, $\cM \vDash`` \exists \text{ an ordinal } \alpha\,\, \forall a \in \cP^{\fin}(A) \, \exists
y \in V_{\alpha}\,\forall x \in a\, \phi(x,y)$''. We work in the model $\cM$.

Let $\la p, q\ra \in \bbH$ be a forcing condition and $k = \rank q$.
For $i = a\in p$ we let $q'(a) =$
$$ \{ \la x_0,\ldots, x_{k-1}, x_{k}\ra  \,\mid\, \la x_0,\ldots, x_{k-1}\ra \in q(a) \,\wedge \,  x_k \in V_{\alpha}\,\wedge\, \forall x \in a\, \phi(x,x_k)\};$$
 $q'(a) =\{\emptyset_{k+1}\}$ otherwise. Then $\la p, q' \ra $ extends $\la p, q \ra$.
For every $x \in A$ the set $c = p \cap \{ a \in \cP^{\fin}(A) \,\mid\, x \in a\} $ is in $\bbP$ because  $p \setminus c$ is thin (Lemma~\ref{union}),
 and so, by Proposition~\ref{Los}, $\la p, q' \ra \Vdash \phi(x,\dot{G}_k)$.

By the genericity of $\cG$ there exist  $\la p, q\ra \in \cG$ and $k \in \Omega$ such that  $\la p, q \ra \Vdash \phi(x,\dot{G}_k)$ for all $x \in A$. By the Fundamental Theorem~\ref{fteu} $\fN_A \vDash \phi(x,G_k)$ for all $x \in A$.
This means that the $\rightarrow$ implication of Idealization over $A$ for $\in$-formulas with standard parameters is satisfied in $\widehat{\fN}_A $. The opposite implication follows from $\SP$; see Lemma~\ref{downclosed} in Section~\ref{starzf}.
\end{proof}

\subsection{Further theories}
Nelson's $\IST$ postulates a form of Idealization that is even stronger than Bounded Idealization (but it contradicts Boundedness).

$\mathbf{I}$ (Idealization) \quad Let $\phi$ be an $\in$-formula with arbitrary parameters.
$$\forall^{\st\fin}  a \;\exists y\; \forall x \in a\;   \phi(x,y)\eqi 
\exists y \; \forall^{\st} x  \; \phi(x,y).$$

The theory $\BSPT' = \ZF + \textbf{T} + \textbf{SP}' +\B +  \textbf{BI}'$ is introduced in Subsection~\ref{s13}.
We let $\ISPT'$ be the theory obtained from $\BSPT'$ by deleting Boundedness and replacing Bounded Idealization for formulas with standard parameters by  $\mathbf{I}'$, Nelson's Idealization for formulas with standard parameters.
In other words, $\ISPT'= \ZF + \textbf{T} + \textbf{SP}' + \textbf{I}'$.
\label{ispt'}

Principle $\mathbf{I}$ implies the existence of a finite set that contains all standard sets as elements, and has certain undesirable consequences from the metamathematical point of view. Kanovei and Reeken~\cite{KR}, Theorem 4.6.23,  prove that there are countable models $\cM = (M, \in^{\cM})$ of $\ZFC$ that cannot be extended to a model of $\IST$ in which $M$ would be the class of all standard sets (assuming $\ZFC$ is consistent). We do not know whether the same is the case for $\ISPT'$. Nevertheless we have the following result.

\begin{theorem}\label{ispt}
$\ISPT'$ is a conservative extension of $\ZF$.
\end{theorem}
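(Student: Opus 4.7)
Following the pattern of Theorems~\ref{TheoremA}--\textbf{C}, the natural strategy is to show that every countable model $\cM \vDash \ZF$ admits an extension $\widehat{\fN} = (N, \in^{\ast}, M)$ modelling $\ISPT'$, by generalising the extended-ultrapower construction of Subsection~\ref{SS1} and Subsection~\ref{extult} from the fixed standard index set $A$ of Proposition~\ref{bsptmodels} to a \emph{class-sized} index. Concretely, I would take $I$ to be the proper class $\cP^{\fin}(\bbV)$ in the sense of $\cM$, calling a subclass $p \subseteq I$ \emph{thick} if
\[
\forall m \in \bbN\; \exists n \in \bbN\; \forall a \in \cP^{\le m}(\bbV)\; \exists b \in p \cap \cP^{\le n}(\bbV)\;(a \subseteq b),
\]
so that for each $x \in \bbV^{\cM}$ the class $\{i \in I \,\mid\, x \in i\}$ is thick, and the analogue of Lemma~\ref{union} still holds. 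The conditions $\la p, q\ra \in \bbH$ are pairs where $p$ is a definable thick subclass of $I$ in $\cM$ and $q$ is a definable function in $\cM$ with $q(i) \subseteq \bbV^k$ nonempty for some $k \in \bbN$; all such conditions are class-sized but definable from set parameters in $\cM$.

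The basic forcing machinery (Lemmas~\ref{basic1}, \ref{fixf}, \ref{decideN}, \ref{union}, \ref{countableclosure}, Proposition~\ref{Los}, and the Fundamental Theorem~\ref{fteu}) carries through as a scheme over definable classes, and since $\cM$ is countable there are only countably many pairs (formula, set parameter) and hence only countably many definable dense subclasses to accommodate. A generic filter $\cG$ meeting every such dense class is therefore built by recursion in the metatheory. With $\cG$ in hand, $\widehat{\fN} = (N, \in^{\ast}, M)$ is constructed as in Subsection~\ref{extult}, and $\T$, $\N$ and $\SP$ follow exactly as in Proposition~\ref{bsptmodels}.

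The new content is verifying $\mathbf{I}'$. Let $\phi(u,v)$ be an $\in$-formula with standard parameters and suppose $\widehat{\fN} \vDash \forall^{\st\fin} a\, \exists y\, \forall x \in a\,\phi(x,y)$. By Transfer, $\cM \vDash \forall a \in \cP^{\fin}\, \exists y\, \forall x \in a\,\phi(x,y)$, and the Reflection Principle of $\ZF$ applied in $\cM$ makes
\[
\alpha(a) \;=\; \min\{\beta \,\mid\, \exists y \in V_\beta\, \forall x \in a\,\phi(x,y)\}
\]
a definable function in $\cM$. Given any $\la p,q \ra \in \cG$ of rank $k$, the class of rank-$(k+1)$ extensions $\la p, q' \ra$ with
\[
q'(a) = \{\la x_0,\ldots,x_{k-1},y\ra \,\mid\, \la x_0,\ldots,x_{k-1}\ra \in q(a) \,\wedge\, y \in V_{\alpha(a)} \,\wedge\, \forall x \in a\,\phi(x,y)\}
\]
is dense below $\la p,q\ra$ and definable, so $\cG$ contains some such $\la p,q'\ra$; the bound $y \in V_{\alpha(a)}$ is what keeps each $q'(a)$ a genuine set of $\cM$. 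For every $x \in \bbV^{\cM}$ the class $D_x = \{\la p'', q''\ra \le \la p, q'\ra \,\mid\, p'' \subseteq \{i \in p \,\mid\, x \in i\}\}$ is dense below $\la p, q'\ra$ (thickness of $\{i \in p \,\mid\, x \in i\}$ follows from thickness of $p$, as in the proof of Lemma~\ref{union}), and Proposition~\ref{Los} yields $\la p'', q''\ra \Vdash \phi(\check x, \dot G_k)$ for each $\la p'', q''\ra \in D_x$. By $\cM$-genericity, $\cG$ meets every $D_x$, and the Fundamental Theorem~\ref{fteu} gives $\widehat{\fN} \vDash \phi(x, G_k)$ for every standard $x$. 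The converse direction of $\mathbf{I}'$ is routine using $\SP$, as in the proof of Proposition~\ref{bsptmodels}.

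The main obstacle is legitimising the proper-class forcing over a countable model of $\ZF$ without Choice in the metatheory: one must verify that the definable class $\bbH$ supports the forcing-theoretic machinery (especially Proposition~\ref{Los} and the Fundamental Theorem) when conditions and dense sets are genuinely class-sized, and that a generic filter meeting every definable dense subclass can be built by external recursion. The use of Reflection in $\cM$ to confine the witnesses in each $q'(a)$ to the set $V_{\alpha(a)}$ is what makes the class-valued $q'$ a bona fide $\cM$-definable class-function and thereby keeps the construction within the means of the metatheory.
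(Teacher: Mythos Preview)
Your approach is genuinely different from the paper's and, if it could be made to work, would prove strictly more than the theorem requires. The paper does \emph{not} attempt to extend an arbitrary countable model $\cM \vDash \ZF$ to a model of $\ISPT'$; indeed, immediately before the theorem it remarks that for full $\IST$ there exist countable models of $\ZFC$ admitting no such extension, and that ``we do not know whether the same is the case for $\ISPT'$.'' Your class-forcing plan, if correct, would settle this open question.

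The paper instead exploits compactness. If $\ISPT' \vdash \theta$ then already $\ISPT'_r \vdash \theta$ for some finite $r$, where $\ISPT'_r$ has $\ZF$ weakened to $\ZF_r$ (Replacement for $\Sigma_r$ formulas). Given a countable $\cM \vDash \ZF + \neg\theta$, Reflection in $\cM$ yields a limit $\alpha$ with $(V_\alpha)^{\cM} \vDash \ZF_r + \neg\theta$. One then applies Proposition~\ref{bsptmodels} with the \emph{set} $A = (V_\alpha)^{\cM}$ and restricts the resulting $\widehat{\fN}_A$ to $N_\alpha = \{x \mid \widehat{\fN}_A \vDash x \in V_\alpha\}$. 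In this restricted structure every standard set lies in $A$, so Bounded Idealization over $A$ is the same as full $\mathbf{I}'$; hence $\widehat{\fN}_\alpha \vDash \ISPT'_r$, and the contradiction with $\neg\theta$ follows by Transfer. No class-sized conditions are ever needed.

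Your plan is plausible in outline, but ``carries through as a scheme over definable classes'' hides real work that you have not done. Conditions become pairs of $\cM$-definable proper classes, so $\bbH$ is a hyperclass and the extension and forcing relations must be formulated at the level of defining formulas with set parameters. More seriously, the recursive constructions in Lemmas~\ref{decideN} and~\ref{countableclosure} produce $\bbN$-indexed sequences of proper classes; to form the diagonal condition you need these sequences to be \emph{uniformly} definable in $\cM$, which in turn requires showing that the sequence of thick/thin decisions made along the recursion is itself a set in $\cM$. This may well be arrangeable, but it is not routine, and given that the authors explicitly flag the extendability question as open, you should not claim the result without actually carrying out these verifications.
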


\begin{proof}
Let us assume that  $\ISPT' \vdash \theta$ but $\ZF \nvdash \theta$, for some $\in$-sentence $\theta$.
Let $\ZF_r$ be $\ZF$ with the Axiom Schema of Replacement restricted to $\Sigma_r$-formulas, 
and let $\ISPT'_r$ be $\ISPT'$ with $\ZF$ replaced by $\ZF_r$.
There is $r \in \omega$ for which $\ISPT'_r \vdash \theta$.

Let $\cM = (M, \in^{\cM})$ be a model of $\ZF + \neg \theta$.
By the Reflection Principle of $\ZF$, valid in $\cM$, there is $\alpha \in M$ such that 
$\cM \vDash ``\alpha \text{ is a limit ordinal}$'', $\cM \vDash \phi^{V_{\alpha}}$  for every axiom $\phi$ of $\ZF_r$, and $\cM \vDash (\neg \theta)^{V_{\alpha}}$.

We let $A = (V_{\alpha})^{\cM}$ and  use Proposition~\ref{bsptmodels}  to extend $\cM$ to a model $\widehat{\fN}_A$.  We define
$N_{\alpha} = \{ x \in N_A \,\mid\, \widehat{\fN}_A \vDash x \in V_{\alpha}\}$.
It is easy to verify that $\widehat{\fN}_{\alpha} = (N_{\alpha}, \in^{\ast}\, \uh \! N_{\alpha}, M \cap N_{\alpha})$ is a model of $\ISPT'_r $;
hence $\theta$ holds in $\widehat{\fN}_{\alpha} $. On the other hand, $(\neg \theta)^{V_{\alpha}}$ holds in $\cM$ and hence, by Transfer, $\neg \theta$ holds in $\widehat{\fN}_{\alpha} $. A contradiction.
\end{proof}

Kanovei and Reeken~\cite{KR}, Theorem 3.4.5, showed that the class of bounded sets in $\IST$ satisfies the axioms of $\BST$. 
This result holds also for $\ISPT'$ and $\BSPT'$, respectively,  and establishes the following theorem.

\begin{theorem}\label{bspt}
The theory $\BSPT'$ is a conservative extension of $\ZF$.
\end{theorem}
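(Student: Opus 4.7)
The plan is to adapt the Kanovei--Reeken reduction of $\BST$ to $\IST$ (\cite{KR}, Theorem 3.4.5), in which the class of bounded sets inside a model of $\IST$ carries an interpretation of $\BST$, to the weaker pair $\ISPT'$/$\BSPT'$. Combined with Theorem~\ref{ispt} (conservativity of $\ISPT'$ over $\ZF$), this will immediately deliver the result.

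Working in $\ISPT'$, let $B = \{x \mid \exists^{\st} y\,(x \in y)\}$ be the class of bounded sets. The first step is to verify that the structure $(B, \in\uh B, \st\uh B)$ satisfies every axiom of $\BSPT'$. Following \cite{KR}: $B$ is transitive, so Extensionality, Foundation and Pairing are inherited; Union, Power Set, Separation and Replacement hold in $B$ because each definable image or subset of a set in $B$ can be bounded by a standard set using Transfer together with the Reflection Principle of $\ZF$. Transfer, Nontriviality and the Standard Part principle $\SP'$ descend to $B$ once one observes that every standard set is bounded (since $x \in \{x\}$, and by Transfer applied to its defining $\in$-formula, $\{x\}$ is standard whenever $x$ is), so that the standard parameters and witnesses involved all lie in $B$. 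Boundedness is automatic in $B$. The one substantive step is the derivation of Bounded Idealization $\BI'$ in $B$ from Nelson's Idealization $\mathbf{I}'$ in $\ISPT'$: given an $\in$-formula $\phi$ with standard parameters and a standard set $A$, $\mathbf{I}'$ produces a witness $y$ to $\forall^{\st} x \in A\,\phi(x,y)$, and Reflection allows $y$ to be chosen of standard rank, hence in $B$.

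With the interpretation in hand the conservativity deduction is brief. Suppose $\BSPT' \vdash \theta$ for an $\in$-sentence $\theta$; relativizing to $B$ yields $\ISPT' \vdash \theta^B$. Because the standard sets of $B$ coincide with the standard sets of the ambient $\ISPT'$, Transfer inside the $\BSPT'$-model $B$ gives $\theta^B \leftrightarrow \theta^{\st}$, while Transfer in $\ISPT'$ itself gives $\theta^{\st} \leftrightarrow \theta$ (both equivalences use that Transfer is bidirectional for $\in$-sentences, obtained by dualising $\forall^{\st} \to \forall$ under negation). Chaining these equivalences yields $\ISPT' \vdash \theta$, and Theorem~\ref{ispt} then delivers $\ZF \vdash \theta$.

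The principal obstacle is the careful bookkeeping required to verify that $B$ satisfies $\ZF$ (especially Replacement) and that $\BI'$ in $B$ follows from $\mathbf{I}'$ in $\ISPT'$. These are precisely the steps treated in detail for the $\IST$--$\BST$ reduction in \cite{KR}, and the weaker parameter restrictions present in $\ISPT'$ and $\BSPT'$ make the adaptation essentially routine.
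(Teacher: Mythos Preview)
Your proposal is correct and follows essentially the same approach as the paper: the paper's proof consists of a single sentence citing Kanovei--Reeken~\cite{KR}, Theorem 3.4.5, observing that the class of bounded sets in a model of $\ISPT'$ interprets $\BSPT'$, and invoking Theorem~\ref{ispt}. You have simply filled in the details of that interpretation and of the conservativity deduction, which the paper leaves to the reference.
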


\emph{This concludes the proof of Theorem \textbf{C}.}

\bigskip
Finally, we prove that if $\cM$ satisfies $\ADC$, then the model  $\widehat{\fN}_A$ constructed in   Proposition~\ref{bsptmodels} satisfies $\CC$. 
We note that the definition of forcing for $\st$-$\in$-formulas in Section~\ref{forcingexternal},  and Lemma~\ref{homogeneity}, extend to $I = \cP^{\fin} (A)$.

\begin{proposition}\label{exxt} If $\cM$ is a countable model of $\ZFc$, then
the extended ultrapower $\widehat{\fN}_A$ satisfies Countable $\st$-$\in$-Choice (both $\CC$ and $\CC^{\st}$). 
\end{proposition}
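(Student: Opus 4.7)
My plan is to run the forcing argument of Proposition~\ref{cc} in the $\bbH$-forcing of Sections~\ref{SS1} and~\ref{forcingexternal}, specialized to $I = \cP^{\fin}(A)$. The only substantive adjustments are that ``thin'' replaces ``bounded'' throughout (Lemma~\ref{basic1}(4) and Proposition~\ref{Los} remain valid in this form), and Lemma~\ref{countableclosure} replaces the elementary fact, used in Section~\ref{stand}, that a decreasing $\omega$-sequence of cofinite sets has a common thick refinement modulo finite sets.

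Suppose $\widehat{\fN}_A \vDash \forall^{\st} m \in \bbN\, \exists v\, \phi(m, v)$; by the Fundamental Theorem (extended to $\st$-$\in$-formulas as in Section~\ref{forcingexternal}) fix $\la p_0, q_0\ra \in \cG$ forcing this sentence in $\cM$. The analog of Lemma~\ref{forall} for $\bbH$, proved identically, combined with clause~(10) of Definition~\ref{forcing}, shows that for each $m \in \Omega$ the class $E_m = \{\la p', q'\ra \le \la p_0, q_0\ra : \exists n\; \la p', q'\ra \Vdash \phi(m, \dot{G}_n)\}$ is dense below $\la p_0, q_0\ra$ in $\cM$. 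Working in $\cM$, apply $\ADC$ to extract a descending sequence $\la \la p_m, q_m\ra, n_m\ra_{m \in \bbN^{\cM}}$ with $\la p_{m+1}, q_{m+1}\ra \in E_m$ witnessed by $n_m$; Lemma~\ref{countableclosure} then yields $p' \in \bbP^{\cM}$ with $p' \setminus p_m$ thin for every $m$.

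The main obstacle is that the ranks $k_m = \rank q_m$ may grow without bound, so no single $q$ directly carries all the names $\dot{G}_{n_m}$. To circumvent this I invoke $\ACC$ in $\cM$: for each $m$, pick $f_m \in \bbF^{\cM}$ with $f_m(i)$ equal to the $n_m$-th coordinate of some chosen tuple in $q_{m+1}(i)$ for $i \in p_{m+1}$. The Reflection Principle of $\ZF$ in $\cM$ provides a $V_\alpha$ containing witnesses for every $m$, so that $\ACC$ yields the entire sequence $\la f_m : m \in \bbN\ra$ as an element of $M$. By Proposition~\ref{Los} applied to $\la p_{m+1}, q_{m+1}\ra \Vdash \phi(m, \dot{G}_{n_m})$, one has $\phi(m, f_m(i))$ for almost every $i \in p_{m+1}$, hence for almost every $i \in p'$. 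Now set $g(i) = \{\la m, f_m(i)\ra : m \in \bbN^{\cM}\}$ and let $q_{\ast}$ have rank $k_0 + 1$ with $q_{\ast}(i) = \{\la x_0, \ldots, x_{k_0 - 1}, g(i)\ra : \la x_0, \ldots, x_{k_0 - 1}\ra \in q_0(i)\}$. Then $\la p', q_{\ast}\ra$ extends $\la p_0, q_0\ra$, and Proposition~\ref{Los} shows $\la p', q_{\ast}\ra \Vdash ``\dot{G}_{k_0}$ is a function with domain $\bbN$'' and, for every $m \in \Omega$, $\la p', q_{\ast}\ra \Vdash \phi(m, \dot{G}_{k_0}(m))$. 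Together with clauses~(11) and~(8) of the forcing definition, this upgrades to $\la p', q_{\ast}\ra \Vdash \forall^{\st} m \in \bbN\, \phi(m, \dot{G}_{k_0}(m))$. Genericity of $\cG$ and the Fundamental Theorem then deliver the required function in $\widehat{\fN}_A$, proving $\CC$.

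For $\CC^{\st}$, rerun the argument with $E_m$ replaced by $E_m^{\st} = \{\la p', q'\ra : \exists n\; \la p', q'\ra \Vdash \phi(m, \dot{G}_n) \wedge \st(\dot{G}_n)\}$, which is dense below $\la p_0, q_0\ra$ under the stronger hypothesis $\forall^{\st} m\, \exists^{\st} v\, \phi(m, v)$. By clause~(12), the forcing of $\st(\dot{G}_{n_m})$ lets $f_m$ be taken constant with some standard value, whence $g$ is forced to be standard and $\la p', q_{\ast}\ra \Vdash \st(\dot{G}_{k_0})$ yields a standard witnessing function.
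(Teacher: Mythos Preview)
Your argument has a genuine gap at the step where you ``invoke $\ACC$ in $\cM$: for each $m$, pick $f_m \in \bbF^{\cM}$ with $f_m(i)$ equal to the $n_m$-th coordinate of some chosen tuple in $q_{m+1}(i)$.'' The existence of even a single such $f_m$ requires selecting, for every $i \in I = \cP^{\fin}(A)$, an element of $q_{m+1}(i)$; this is a choice function over an uncountable index set, which $\ZFc$ does not provide. Section~\ref{stand} gets away with this only because there $I = \bbN$, so $\ACC$ suffices to thin each $q$ to a singleton-valued $\widehat{q}$. For uncountable $A$ the $\bbQ$-component of $\bbH$ cannot be collapsed in this way, and that is precisely why the paper's proof is more elaborate than Proposition~\ref{cc}.

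A second problem: you invoke Proposition~\ref{Los} to pass from $\la p_{m+1}, q_{m+1}\ra \Vdash \phi(m, \dot{G}_{n_m})$ to ``$\phi(m, f_m(i))$ for almost every $i$.'' But $\phi$ in $\CC$ is an $\st$-$\in$-formula, and \L o\'{s}'s Theorem holds only for $\in$-formulas; for external formulas the pointwise statement $\phi(m, f_m(i))$ is not even meaningful. The paper circumvents both issues simultaneously. It defines $q_{\infty}$ of rank $k+1$ by letting $q_{\infty}(a)$ be the \emph{set} of all tuples $\la x_0,\ldots,x_k\ra$ whose last coordinate $x_k$ is a function on $\bbN$ with $x_k(j)$ ranging over the $n_j$-th coordinates of tuples in $q_j(a)$ for $j$ up to the stratum of $a$; this is definable without any choice. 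To show $\widehat{\fN}_A \vDash \phi(m, G_k(m))$ it does not appeal to \L o\'{s} but to the homogeneity Lemma~\ref{homogeneity}, which transfers the forcing of the $\st$-$\in$-formula $\phi$ along the projections $q_m \uh \sigma_m$ and $q' \uh \sigma'$.
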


\begin{proof}
We work in $\ZFc$.

$\CC$: Let us assume that $\la p, q \ra $ has rank $k$ and $\la p, q \ra \Vdash \forall^{\st} m \in \bbN\, \exists v \,\phi (m, v)$.
Let $E_m = $
$$\{ \la p', q' \ra  \in \bbH \,\mid\, \la p', q'\ra \le  \la p, q \ra \,\wedge\,
 \la p', q'\ra  \Vdash \phi (m, \dot{G}_n) \text{ for some } n> k\}.$$
By an argument like the one in Lemma~\ref{forall}  it follows that for every $m$ and every $\la p'', q''\ra \le  \la p, q \ra $ there is $ \la p', q' \ra \le  \la p'', q'' \ra $ such that $ \la p', q' \ra \in E_m$.
[We note that the $E_m$ may be proper classes, but by the Reflection Principle  there is a set $S$ such that $\la p, q \ra \in S$ and for every $m$ and every $\la p'', q''\ra \in S$ there is $ \la p', q' \ra \le  \la p'', q'' \ra $ such that $ \la p', q' \ra \in S \cap E_m$. The classes $E_m$ can be replaced by the sets $S \cap E_m$ in the argument below.]

We let 
$\la m', \la p' , q' \ra \ra \mathbf{R} \la m'', \la p'', q''\ra \ra\,\text{  iff }$
$$ \la p'', q'' \ra \le \la p', q'\ra \le \la p, q \ra\,\wedge\, m''= m'+1 \,\wedge \, \la p' , q' \ra \in E_{m'}\,\wedge\, \la p'', q''\ra \in E_{m''}.$$

Applying $\ADC$ to the relation $\mathbf{R}$ we obtain  a sequence $ \la \la p_m, q_m \ra \,\mid\, m \in \bbN \ra$ 
such that $ \la p_0 , q_0 \ra \le \la p , q \ra $ and, for each $m$,   
 $\la p_{m+1}, q_{m+1}\ra \le \la p_m, q_m\ra$ and $\la p_m, q_m \ra \Vdash \phi (m, \dot{G}_n)$ for some $n \in \bbN$, $n > k$. Let $\rank q_m = \ell_m$ and let $n_m < \ell_m$, $n_m > k$,  be the least such $n$.

As in the proof of Lemma~\ref{countableclosure}, let
$p_{\infty} = \bigcup_{m=0}^{\infty} C_m$ where $C_m = \{ a \in p_m  \,\mid\, |a| \le \nu_{p_m}(m)\}$. We recall that 
$p_{\infty} \setminus p_m$ is thin for every $m$;
hence $\la p_{\infty}, q_m\ra \Vdash \phi (m, \dot{G}_{n_m})$.
We define a function $q_{\infty} \in \bbQ$ of rank $k+1$ as follows: 
If $a \in C_m \setminus \bigcup_{j=0}^{m-1} C_{j}$ then   
$$q_{\infty}(a) = \{ \la x_0,\ldots, , x_k\ra \,\mid\, x_k \text{ is a function } \,\wedge\,\dom x_k = \bbN \, \wedge \forall j \le m $$ 
$$\exists \,  y_k,\ldots, y_{\ell_{j}-1}  \; 
(\la x_0,\ldots, x_{k-1},  y_k,\ldots, y_{\ell_{j}-1}\ra \in q_j(a) \,\wedge\, x_k(j) = y_{n_j} )\, \wedge\,$$ $$\forall j > m\, (x_k(j) = 0)\}  .$$
By ``\L o\'{s}'s Theorem'', $\la p_{\infty}, q_{\infty}\ra \Vdash ``\dot{G}_k \text{ is a function with } \dom \dot{G}_k = \bbN$.''

Now assume that $\widehat{\fN}_A \vDash \forall^{\st} m \, \exists v \,\phi (m, v)$.
Then there is $\la p, q \ra \in \cG$ such that $\la p, q \ra \Vdash \forall^{\st} m \, \exists v \,\phi (m, v)$.
By the above discussion, there is a condition of the form $\la p_{\infty}, q_{\infty} \ra$ such that $\la p_{\infty}, q_{\infty} \ra \in \cG$; hence 
$\widehat{\fN}_A \vDash `` G_k \text{ is a function with } \dom G_k = \bbN$.''
Fix $m \in M$; let $\widehat{\fN}_A \vDash G_k (m) = G_{\ell}$; we can assume $\ell > k$.
Then there is some  $\la p', q' \ra \in \cG$,  $\la p', q' \ra \le \la p_{\infty}, q_{\infty} \ra$, such that 
$\la p', q' \ra \Vdash \dot{G}_k (m) = \dot{G}_{\ell}$.

Let $\sigma_m = k \cup \{ n_m\}$ and $\sigma' = k \cup \{ \ell \}$.

From $\la p_{\infty}, q_m \ra \Vdash \phi(m,\dot{G}_{n_m}) $
and Lemma~\ref{homogeneity} it follows that the condition
$\la p_{\infty}, q_m \uh \sigma_m  \ra \Vdash \phi(m,\dot{G}_{k}) $.
We see from the construction that $\la p'\cap p_m, q' \uh \sigma' \ra \le \la p'\cap p_m, q_{m} \uh \sigma_m\ra$.
As $p' \setminus p_m$ is thin, we have $\la p', q'  \uh \sigma'\ra \Vdash \phi(m,\dot{G}_{k})$, 
and by Lemma~\ref{homogeneity} again,
$\la p', q' \ra \Vdash \phi(m,\dot{G}_{\ell}) $.
From $\la p', q' \ra \in \cG$, we conclude that 
$\widehat{\fN}_A \vDash  \phi(m,G_{\ell})$ and hence also 
$\widehat{\fN}_A \vDash  \phi(m,G_{k}(m))$.

$\CC^{\st}$:
Assuming $\la p, q \ra \Vdash \forall^{\st} m \in \bbN\, \exists^{\st} v \,\phi (m, v)$, we can require in the definition of $E_m$ that 
$\la p', q'\ra  \Vdash \st(\dot{G}_n)$, i.e.,  $\la p', q'\ra  \Vdash \dot{G}_n = c_{z_n}$ for a uniquely determined $z_n$. In the definition of $q_{\infty} (a) $ we can let $x_k = \la  z_{n_j}  \,\mid \, j \in \bbN\ra$.
Then $\la p_{\infty}, q_{\infty} \ra \Vdash \st( \dot{G}_k)$ and $\widehat{\fN}_A \vDash \st(G_k)$.
\end{proof}

Let $\BSCT'$ be the theory obtained from $\BSPT'$ by adding $\ADC$ and strengthening $\SP$ to $\CC$; analogously for $\ISCT'$.
The last two theorems of this section follow by the same arguments as those used to prove Theorems~\ref{ispt} and~\ref{bspt}.

\begin{theorem}
The theory $\ISCT'$ is a conservative extension of $\ZFc$.
\end{theorem}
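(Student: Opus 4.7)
The plan is to repeat the proof of Theorem~\ref{ispt} almost verbatim, but to use the extended ultrapower of Proposition~\ref{exxt} (which, when the ground model satisfies $\ZFc$, additionally provides Countable $\st$-$\in$-Choice) in place of Proposition~\ref{bsptmodels} alone. This matches the author's remark that the theorem follows by the same argument as Theorem~\ref{ispt}.

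Suppose for contradiction that $\ISCT' \vdash \theta$ while $\ZFc \nvdash \theta$ for some $\in$-sentence $\theta$. Since Replacement is a schema, there is $r \in \omega$ with $\ISCT'_r \vdash \theta$, where $\ISCT'_r$ denotes $\ISCT'$ with Replacement restricted to $\Sigma_r$-formulas. Pick a countable model $\cM = (M, \in^{\cM})$ of $\ZFc + \neg\theta$. By the Reflection Principle in $\cM$, choose a limit ordinal $\alpha \in M$ with $\mathrm{cf}(\alpha) > \omega$ in $\cM$ such that $\cM \vDash (\ZF_r + \ADC)^{V_{\alpha}} \wedge (\neg \theta)^{V_{\alpha}}$; this is possible because $\ADC$ is a single sentence, hence automatically reflected.

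Set $A = V_{\alpha}^{\cM}$, apply Propositions~\ref{bsptmodels} and~\ref{exxt} to obtain $\widehat{\fN}_A = (N_A, \in^{\ast}, M)$, and restrict to $\widehat{\fN}_{\alpha} = (N_{\alpha}, \in^{\ast}\uh N_{\alpha}, M \cap N_{\alpha})$ where $N_{\alpha} = \{x \in N_A \mid \widehat{\fN}_A \vDash x \in V_{\alpha}\}$, exactly as in the proof of Theorem~\ref{ispt}. I then claim $\widehat{\fN}_{\alpha} \vDash \ISCT'_r$, which together with $\ISCT'_r \vdash \theta$ yields $\widehat{\fN}_{\alpha} \vDash \theta$; but Transfer from $\cM \vDash (\neg\theta)^{V_{\alpha}}$ forces $\widehat{\fN}_{\alpha} \vDash \neg\theta$, a contradiction. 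For the claim itself, the $\in$-axioms $\ZF_r + \ADC$ transfer to $V_{\alpha}^{\widehat{\fN}_A} = (N_{\alpha}, \in^{\ast}\uh N_{\alpha})$ from $V_{\alpha}^{\cM}$ via Transfer in $\widehat{\fN}_A$; Transfer, $\mathbf{O}$, $\SP$ and $\mathbf{I}'$ carry over to $\widehat{\fN}_{\alpha}$ precisely as in the proof of Theorem~\ref{ispt}, since the witnesses produced by the internal Reflection step inside the proof of Proposition~\ref{bsptmodels} already lie in $V_{\alpha}^{\widehat{\fN}_A}$.

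The main obstacle, and the only genuinely new point, is verifying $\CC$ in $\widehat{\fN}_{\alpha}$ rather than merely in $\widehat{\fN}_A$. Revisiting the proof of Proposition~\ref{exxt}, the $\CC$-witness is (the ultrapower image of) a function whose value at $a \in I$ is $g(a) = \{\la m, f_m(a)\ra \mid m \in \bbN\}$, with the $f_m$ chosen by $\ADC$ so that $\la p_m, q_m\ra \Vdash \phi(m, \dot{G}_{n_m})$. Since the parameters of $\phi$ are standard, i.e.\ lie in $V_{\alpha}^{\cM}$, an application of Reflection inside $V_{\alpha}^{\cM}$ (which satisfies the necessary fragment of $\ZF$) lets us choose the $f_m$ so that their values at $a$ lie in $V_{\alpha}$; the hypothesis $\mathrm{cf}(\alpha) > \omega$ then bounds the ranks of the countably many $f_m(a)$ uniformly by some $\beta < \alpha$, so that $g(a) \in V_{\alpha}$ for almost all $a$, and hence $g$ represents an element of $N_{\alpha}$. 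This completes the plan.
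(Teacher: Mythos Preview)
Your overall plan---repeat the proof of Theorem~\ref{ispt} starting from a countable model of $\ZFc$ and invoke Proposition~\ref{exxt} to obtain $\CC$ in $\widehat{\fN}_A$---is exactly what the paper intends, and you correctly identify that the only new point is verifying that $\CC$ survives the restriction to $\widehat{\fN}_\alpha$. Your argument for that step, however, is flawed in two places.

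First, $\CC$ allows \emph{arbitrary} parameters; in $\widehat{\fN}_\alpha$ these are arbitrary elements of $N_\alpha = V_\alpha^{\widehat{\fN}_A}$, not elements of $V_\alpha^{\cM}$, so your sentence ``the parameters of $\phi$ are standard, i.e.\ lie in $V_\alpha^{\cM}$'' is simply false, and the appeal to ``Reflection inside $V_\alpha^{\cM}$'' is misplaced. What actually matters is that the instance of $\CC$ one needs in $\widehat{\fN}_A$ has the form $\forall^{\st} m\,\exists v\,(v\in V_\alpha \wedge \phi^{V_\alpha}(m,v))$, so the conditions $\la p_m,q_m\ra$ produced in the proof of Proposition~\ref{exxt} already force the atomic statement $\dot{G}_{n_m}\in V_\alpha$ (case~(7) of Definition~\ref{forcing}). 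Second, your description of the witness as $g(a)=\{\la m,f_m(a)\ra\mid m\in\bbN\}$ is the construction of Proposition~\ref{cc} for the simplified forcing, not of Proposition~\ref{exxt}; in the $\bbH$-forcing there are in general no single functions $f_m$ representing $\dot{G}_{n_m}$---that is precisely why the extended ultrapower is needed. If you re-read the definition of $q_\infty(a)$ in Proposition~\ref{exxt}, for $a\in C_m\setminus\bigcup_{j<m}C_j$ each admissible $x_k$ is a function on $\bbN$ with $x_k(j)=y_{n_j}$ for $j\le m$ and $x_k(j)=0$ for $j>m$. Since the $y_{n_j}$ lie in $V_\alpha$ and the support is \emph{finite}, one has $x_k\in V_\alpha$ as soon as $\alpha$ is a limit ordinal; hence $\la p_\infty,q_\infty\ra\Vdash \dot{G}_k\in V_\alpha$ and $G_k\in N_\alpha$. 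The hypothesis $\mathrm{cf}(\alpha)>\omega$ is therefore unnecessary, and the argument goes through with $\alpha$ merely a limit ordinal, exactly as in Theorem~\ref{ispt}.
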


\begin{theorem}
The theory $\BSCT'$ is a conservative extension of $\ZFc$.
\end{theorem}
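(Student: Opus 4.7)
The plan is to mirror the proofs of Theorems~\ref{ispt} and~\ref{bspt}, substituting Proposition~\ref{exxt} for Proposition~\ref{bsptmodels} wherever Countable Choice is required. Suppose for contradiction $\BSCT' \vdash \theta$ but $\ZFc \nvdash \theta$ for some $\in$-sentence $\theta$. Since only finitely many axioms are used, fix $r \in \omega$ with $\BSCT'_r \vdash \theta$, where $\BSCT'_r$ has Replacement restricted to $\Sigma_r$-formulas. By Completeness pick a countable model $\cM \vDash \ZFc + \neg\theta$, and apply Reflection in $\cM$ to obtain a limit ordinal $\alpha \in M$ such that $\cM \vDash \phi^{V_{\alpha}}$ for every axiom $\phi$ of $\ZFc_r$ and $\cM \vDash (\neg\theta)^{V_{\alpha}}$.

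Next, set $A = (V_{\alpha})^{\cM}$ and form the extended ultrapower $\widehat{\fN}_A = (N_A, \in^{\ast}, M)$ via an $\cM$-generic filter $\cG$ on the forcing $\bbH$ built over $I = \cP^{\fin}(A)$, as in the generalization preceding Proposition~\ref{bsptmodels}. By that proposition $\widehat{\fN}_A$ satisfies $\T$, $\N$, $\B$, $\SP$ and $\BI'$; since $\cM \vDash \ZFc$, Proposition~\ref{exxt} additionally yields $\CC$ (and $\CC^{\st}$) in $\widehat{\fN}_A$. Now define $N_{\alpha} = \{ x \in N_A \,\mid\, \widehat{\fN}_A \vDash x \in V_{\alpha}\}$ and set $\widehat{\fN}_{\alpha} = (N_{\alpha}, \in^{\ast}\, \uh \! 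N_{\alpha}, M \cap N_{\alpha})$. Exactly as in Theorem~\ref{ispt}, $\widehat{\fN}_{\alpha}$ models $\ISCT'_r$: the $\ZFc_r$ axioms hold in $V_{\alpha}$ of $\widehat{\fN}_A$ by elementarity plus Transfer; $\mathbf{I}'$ follows because the witnesses produced in Proposition~\ref{bsptmodels} lie in $V_{\alpha}$ (by the Reflection step inside $\cM$); and $\CC$ restricts cleanly because the choice function constructed in Proposition~\ref{exxt} ranges within $V_{\alpha}$ when its values do.

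Finally, apply the $\IST\!\to\!\BST$ translation of Kanovei and Reeken~\cite{KR}, Theorem 3.4.5, as already invoked in the proof of Theorem~\ref{bspt}: the class of sets bounded by a standard set in $\widehat{\fN}_{\alpha}$ is a model of $\BSCT'_r$, because Countable $\st$-$\in$-Choice transfers (any standard $\bbN$-indexed family of bounded sets admits a bound standard in $\widehat{\fN}_{\alpha}$ by Boundedness, so the $\CC$-function produced in $\widehat{\fN}_{\alpha}$ lives inside this bounded-parts model). Hence $\theta$ holds in this inner model. But its standard universe is $(V_{\alpha})^{\cM}$, so Transfer inside it gives $\cM \vDash \theta^{V_{\alpha}}$, contradicting our choice of $\alpha$.

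The main delicate point is confirming that Proposition~\ref{exxt} genuinely goes through in the generalized setting $I = \cP^{\fin}(A)$. The paper flags this in the paragraph preceding Proposition~\ref{exxt} by extending the definition of forcing for $\st$-$\in$-formulas and Lemma~\ref{homogeneity} to this $I$; the remaining verification is that Lemma~\ref{countableclosure} replaces the $\omega$-closure argument used for $I = \bbN$, so that the descending sequence $\la \la p_m, q_m \ra \,\mid\, m \in \bbN\ra$ produced via $\ADC$ in the metatheory can still be amalgamated into a single $\la p_{\infty}, q_{\infty}\ra$ forcing every $\phi(m, \dot{G}_{n_m})$. Once this is in place, elementarity of the bounded-parts translation for the $\st$-$\in$-language and the transfer of $\ADC$ from $\cM$ to $\widehat{\fN}_{\alpha}$ to its bounded part are routine, and the argument closes.
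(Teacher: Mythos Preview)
Your proposal is correct and follows exactly the approach the paper itself indicates: the paper's proof consists of the single sentence ``The last two theorems of this section follow by the same arguments as those used to prove Theorems~\ref{ispt} and~\ref{bspt},'' and you have spelled out precisely that combination---reflect to $V_\alpha$, build $\widehat{\fN}_A$ over $I=\cP^{\fin}(A)$, invoke Proposition~\ref{exxt} for $\CC$, restrict to $\widehat{\fN}_\alpha$ to obtain an $\ISCT'_r$ model, then pass to the bounded part via the Kanovei--Reeken translation.

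One small sharpening: when you write that the choice function ``ranges within $V_\alpha$ when its values do,'' what is actually needed is that the function $G_k$ itself lies in $V_\alpha$ (in $\widehat{\fN}_A$), not merely its range. This does hold, because in the construction of $q_\infty(a)$ in Proposition~\ref{exxt} each $x_k$ has only finitely many nonzero values, so its range is bounded below the limit ordinal $\alpha$. Similarly, for the transfer of $\CC$ to the bounded part, the clean justification is to use $\CC^{\st}$ (also supplied by Proposition~\ref{exxt}) to obtain a standard $\bbN$-indexed sequence of standard bounds, take their union $Y$, and then truncate the choice function to $Y$; the resulting function lies in the standard set $\cP(\bbN\times Y)$ and agrees with the original at all standard indices.
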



\section{Final Remarks}
\subsection{Open problems}$\!$

(1)  Are the theories $\BSCT' + \SN$  and $\ISCT' + \SN$ (defined above) conservative extensions of $\ZFc$?

We do not know whether $\widehat{\fN}_A$ for $I = \cP^{\fin}(A)$ with uncountable $A$ satisfies $\SN$. In the absence of $\AC$, a way to formulate and prove a suitable analog of Lemma~\ref{isomorphism} is not obvious.

(2) Are the theories $\BSPT$ and $\ISPT$ conservative extensions of $\ZF$?
Are the theories $\BSCT$ and $\ISCT$ conservative extensions of $\ZFc$?

Here $\BSPT$ is obtained from $\BSPT'$ by strengthening (Bounded) Idealization to allow arbitrary parameters; similarly for the other theories.
The likely answer is yes; the obvious approach is to iterate the forcing used to prove the primed versions.
Spector develops iterated extended ultrapowers in~\cite{Spr2}. His method would require nontrivial adaptations in our framework, but it is likely to work provided the answer to problem (1) is yes.
The ultimate result would be that $\BSCT + \SN$ and $\ISCT + \SN$ are conservative extensions of $\ZFc$.

(3) Does every countable model of $\ZF$ have an extension to a model of $\BSPT'$?

The likely answer is again yes, using a suitable iteration of extended ultrapowers. 

(4) Is $\SPOT + \SC$ a conservative extension of $\ZF$?

\subsection{Forcing with filters}
A  more elegant and potentially more powerful notion of forcing is obtained by replacing $\bbP$ with 
$$\widetilde{\bbP} = \{ \cP \,\mid\, \cP\text{ is a filter of unbounded subsets of } I \}$$
where $\cP'$ extends $\cP$ iff $\cP \subseteq \cP'$. ``\L o\'{s}'s Theorem''~\ref{Los} then takes the form: \quad
$\la \cP, q \ra  \Vdash  \phi(\dot{G}_{n_1},\ldots,\dot{G}_{n_s})$ iff
$\rank q  = k > n_1,\ldots, n_s$ and \\
$\exists p \in \cP\, \forall i \in p \, \forall \la x_0,\ldots, x_{k-1}\ra \in q(i)\; 
\phi(x_{n_1},\ldots, x_{n_s}).$\\
The forcing notion $\bbP$ we actually use amounts to restricting oneself to principal filters.

\subsection{Zermelo set theory} 
Similar results can be obtained for theories weaker than $\ZF$. 
Let $\textbf{Z} = \ZF - \text{Replacement}$ be the Zermelo set theory, and let
$\textbf{BT}$ denote Transfer for bounded formulas. 
In the proof of Proposition~\ref{countablemodels} the extended ultrapower can be replaced by the \emph{extended bounded ultrapower} (see~Chang and Keisler~\cite{CK}, Sec. 4.4, for a discussion of ordinary bounded ultrapowers).  This proves that 
$\SPOT^{-} =  \textbf{Z}  + \N + \textbf{BT} +\SP$ \emph{is a conservative extension of} $\textbf{Z}$.
With some modifications, this theory can be taken as an axiomatization of the \emph{internal part} of  \emph{nonstandard universes} of Keisler~\cite{CK, K2} (the superstructure framework for nonstandard analysis).
Analogous results can be obtained for $\SCOT^{-}$, $\BSPT^{-}$ and $\BSCT^{-}$.

\subsection{Weaker theories} 
\emph{Reverse Mathematics} has as its goal the calibration of the exact set-theoretic strength of the principal results in ordinary mathematics. 
One of its chief accomplishments is the discovery that, with a few exceptions, every theorem in ordinary mathematics is logically equivalent (over $\textbf{S}_1$) to one of the five subtheories 
$\textbf{S}_1 - \textbf{S}_5$ 
of second-order arithmetic $\textbf{Z}_2$ (Simpson~\cite[p.\;33]{Si}), known collectively as ``The Big Five.''
Here $\textbf{S}_1$ is the weakest of the five theories, the second-order arithmetic with recursive comprehension axiom, also denoted $\textbf{RCA}_0$, and $\textbf{S}_2$, known as $\textbf{WKL}_0$,
is obtained by adding the Weak K\"{o}nig's Lemma to the axioms of $\textbf{RCA}_0$.
 We refer to Simpson~\cite{Si} for a comprehensive introduction to Reverse Mathematics.

Keisler and others extended the ideas of Reverse Mathematics to the nonstandard realm.  
In Keisler's paper~\cite{K3} it is established that if $\textbf{S}$ is any of the ``Big Five'' theories above, then $\textbf{S}$ has a conservative extension ${}^{\ast}\textbf{S}$ to a theory in the language with an additional unary predicate $\st$; the axioms of ${}^{\ast}\textbf{S}$ include $\textbf{O}$, 
$\textbf{SP}$ and, for the theories stronger than $\textbf{WKL}_0$, also $\textbf{FOT}$ (First-Order Transfer).

In a somewhat different direction, there is an extensive body of work by van den Berg, Sanders and others (see~\cite{VBS}, \cite{Sa} and the references therein) devoted to determining the exact proof-theoretic strength of particular results in infinitesimal ordinary mathematics.
Substantial parts of it can be carried out in these and other elementary systems for nonstandard mathematics,  for example Nelson~\cite{N} and  Sommer and Suppes~\cite{SS}. 
However, these systems do not enable the natural reasoning as practiced in analysis.  
They are usually formalized in the language of second-order arithmetic or type theory.
Basic objects of ordinary analysis, such as real numbers, continuous functions and separable metric spaces, have to be represented in these theories via suitable codes, and  the results may have to be presented ``up to infinitesimals,'' because the full strength of the Transfer principle or the Standard Part principle is not available.
The focus of this paper is on theories like $\SPOT$ or $\SPOT^{-}$, which axiomatize both the traditional and the nonstandard methods of ordinary mathematics in the way they are customarily practiced.
 Rather than looking for the weakest principles that enable a proof of a given mathematical theorem, we formulate theories that are as strong as possible while still \emph{effective} (conservative over $\ZF$) or \emph{semi-effective} (conservative over $\ZFc$).

\subsection{Finitistic proofs}
\label{finitistic}
The model-theoretic proof of Proposition~\ref{countablemodels} as given here is carried out in $\ZF$. 
Using techniques from Simpson~\cite{Si}, Chapter II, esp. II.3 and II.8, it can be verified that the  proof goes through in $\textbf{RCA}_0$ (w.l.o.g. one can assume that $M \subseteq \omega$).

The proof of Theorem \textbf{A} from Proposition~\ref{countablemodels} requires the G\"{o}del Completeness Theorem and therefore $\textbf{WKL}_0$; see~\cite{Si}, Theorem IV.3.3.
We conclude that Theorem \textbf{A} can be proved in $\textbf{WKL}_0$. 

Theorem \textbf{A}, when viewed as an arithmetical statement resulting
from identifying formulas with their G\"{o}del numbers, is
$\Pi^{0}_{2}$.  It is well-known that $\textbf{WKL}_0$ is conservative
over $\textbf{PRA}$ (Primitive Recursive Arithmetic) for $\Pi^{0}_{2}$
sentences (\cite{Si}, Theorem IX.3.16); therefore Theorem\;\textbf{A}
is provable in $\textbf{PRA}$.  The theory $\textbf{PRA}$ is generally
considered to correctly capture finitistic reasoning as envisioned by
Hilbert~\cite{Hi} (see e.g., Simpson~\cite{Si}, Remark IX.3.18) and
Hilbert--Bernays \cite{Hi34, Hi39} (see Zach~\cite{Za07}, p.\;417).
We conclude that Theorem \textbf{A} has a finitistic proof.

These remarks apply equally to Theorems \textbf{B} - \textbf{D}.

\subsection{$\SPOT$ and $\CH$}
Connes (see for example~\cite{C}, pp.\;20--21) objects to the use of ultrafilters but approves of the Continuum Hypothesis ($\CH$).
In the absence of full $\AC$, it is important to distinguish (at least) two versions of $\CH$.

$\CH$:  \quad Every infinite subset of $\bbR$ is either countable or equipotent to $\bbR$.

$\CH^{+}$:  \; $\bbR$ is equipotent to $\aleph_{1}$ (often written $2^{\aleph_0} = \aleph_1$).

The axioms $\CH$ and $\CH^{+}$ are equivalent over $\ZFC$, but not over $\ZFc$.
It is known that  $\ZFc + \CH$  does not imply the existence of any nonprincipal ultrafilters over $\bbN$ ($\CH$ holds in the Solovay model).  We have:

\begin{proposition}
The theory $\SPOT+ \CH$ is a conservative extension of $\ZF+\CH$.
\end{proposition}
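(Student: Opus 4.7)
The plan is to run the proof of Theorem \textbf{A} verbatim, inserting one extra verification: that the extended ultrapower preserves $\CH$. Suppose toward contradiction that $\SPOT + \CH \vdash \theta$ for some $\in$-sentence $\theta$ while $\ZF + \CH \nvdash \theta$. By the G\"odel Completeness Theorem, $\ZF + \CH + \neg\theta$ has a countable model $\cM$.

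Next, apply Proposition~\ref{countablemodels} to $\cM$; its construction uses only that $\cM \vDash \ZF$ and produces nothing resembling choice, so it yields a countable extension $\widehat{\fN} = (N, \in^{\ast}, M) \vDash \SPOT$ in which $M$ is the class of standard sets. The one additional thing to check is that $\widehat{\fN} \vDash \CH$. Since $\CH$ is an $\in$-sentence (no occurrences of $\st$, and no parameters) and Corollary~\ref{elememb} gives that $j : \cM \to \fN$ is an elementary embedding, $\cM \vDash \CH$ forces $\fN \vDash \CH$, which is the same as $\widehat{\fN} \vDash \CH$ because the statement does not mention $\st$.

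Then $\widehat{\fN} \vDash \SPOT + \CH$, hence $\widehat{\fN} \vDash \theta$. Because $\theta$ is a parameter-free $\in$-sentence, applying Transfer in $\widehat{\fN}$ (iteratively to the quantifiers of $\theta$) yields $\cM \vDash \theta$, contradicting $\cM \vDash \neg\theta$. There is no real obstacle: the argument is essentially a one-line addendum to the proof of Theorem \textbf{A}, relying solely on the fact that $\CH$ is an $\in$-sentence and hence transported by the elementarity of $j$. The same template shows more generally that $\SPOT + \sigma$ is a conservative extension of $\ZF + \sigma$ for every $\in$-sentence $\sigma$ consistent with $\ZF$; the theorem is stated for $\CH$ only because of its particular interest in view of the remarks about Connes.
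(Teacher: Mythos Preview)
Your proof is correct, but the paper takes a shorter, purely syntactic route. Instead of reopening the model-theoretic construction, the paper simply observes that since $\CH$ is an $\in$-sentence, the deduction theorem gives: $\SPOT + \CH \vdash \theta$ iff $\SPOT \vdash (\CH \imp \theta)$ iff (by Theorem~\textbf{A}, since $\CH \imp \theta$ is an $\in$-sentence) $\ZF \vdash (\CH \imp \theta)$ iff $\ZF + \CH \vdash \theta$. This uses Theorem~\textbf{A} as a black box and avoids touching Proposition~\ref{countablemodels} or the elementarity of $j$ again. Your approach buys nothing extra here---you yourself note at the end that the argument works for any $\in$-sentence $\sigma$, which is exactly what the deduction-theorem argument shows in one line---but it does make explicit the semantic reason the result holds, namely that $\in$-sentences are preserved by the elementary embedding.
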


\begin{proof}   Let $\phi$ be an $\in$-sentence. Then
$\SPOT + \CH \vdash \phi$ iff $\SPOT \vdash (\CH \imp \phi)$ iff $\ZF
  \vdash ( \CH \imp \phi)$ iff $\ZF + \CH \vdash \phi$. 
\end{proof}

However, it seems clear that Connes has $\CH^{+}$ in mind. 
But $\CH^{+}$ implies that $\bbR$ has a well-ordering (of order type $\aleph_1$).
From this it easily follows that there exist nonprincipal ultrafilters over $\bbN$
(for example, Jech~\cite{J2}, p. 478 proves a much stronger result).
Thus Connes's position on this matter is incoherent.

Apart from the issue of $\CH$, Connes's repeated criticisms of
Robinson's framework starting in 1994 are predicated on the premise
that infinitesimal analysis requires ultrafilters on $\bbN$ (which are
incidentally freely used in some of the same works where Connes
criticizes Robinson).  Our present article shows that Connes's premise
is erroneous from the start.%
\footnote{A more detailed analysis of Connes’s views can be found in
  Sanders (\cite{Sa20}, 2020) and references therein.}

\subsection{External sets}\label{externalsets}
This paper employs only definable external sets, and only in Subsection~\ref{scot}.
It is sometimes claimed that the axiomatic approach is inferior to the model-theoretic one because substantial use of external sets is essential for some of the most important new contributions of Robinsonian nonstandard analysis to mathematics, such as the constructions of nonstandard hulls and Loeb measures. 
The following observations are relevant:
\begin{itemize}
\item
Except in some very special cases, nonstandard hulls and Loeb measures fall in the scope of set-theoretic mathematics, and the use of $\AC$ in their construction is not an issue.
\item
Hrbacek and Katz~\cite{HK} demonstrate that nonstandard hulls and Loeb measures can be constructed in internal-style nonstandard set theories such as $\BST$ and $\IST$.
\item
The theory $\BST$ can seamlessly be extended to $\HST$, a nonstandard set theory that axiomatizes also external sets (see Kanovei and Reeken~\cite{KR}).
In $\HST$ the constructions of nonstandard hulls and Loeb measures can be carried out in ways analogous to those familiar from the model-theoretic approach.
\end{itemize}

\section{Conclusion}  
In this paper we establish that infinitesimal methods in ordinary mathematics require no Axiom of Choice at all, or only those weak forms of $\AC$ that are routinely used in the traditional treatments. 
This conclusion follows from the fact that the theory $\SPOT$ and its various strengthenings, which do  not imply the existence of nonprincipal ultrafilters over $\bbN$, or other strong forms of $\AC$, are sufficient to carry out infinitesimal arguments in ordinary mathematics (and beyond).

But most users of nonstandard analysis work with hyperreals, and the existence of hyperreals does imply the existence of nonprincipal ultrafilters over $\bbN$. 
So it would seem that ultrafilters are needed, after all.
However, this view implicitly assumes that set theory like $\ZFC$, based exclusively on the membership predicate $\in$, is the only correct framework for the Calculus.

Historically,%
\footnote{See for example Katz and Sherry~\cite{KS} and Bair et al.\;\cite{20z}.}
the Calculus of Newton and Leibniz was first made rigorous by Dedekind, Weierstrass and Cantor in the 19th century  using the $\varepsilon$-$\delta$ approach. It was eventually axiomatized in the $\in$-language as $\ZFC$.
After Robinson's development of nonstandard analysis it was realized  that Calculus with infinitesimals  also admits a rigorous formulation, closer to the ideas of Leibniz, Bernoulli, Euler (see~\cite{BBE}) and Cauchy (see~\cite{BBF}). It can be axiomatized in a 
%
set theory using the $\st$-$\in$-language. The primitive predicate $\st$ can be thought
of as a formalization of the Leibnizian distinction between 
assignable and inassignable quantities.
Such  theories are obtained from $\ZFC$ by adding suitable versions of Transfer, Idealization and Standardization.

Now that it has been established that the infinitesimal methods do not
carry a heavier foundational burden than their traditional
counterparts, one can ask the following question.  Which foundational
framework constitutes a more faithful formalization of the
techniques of the 17--19 century masters?  For all the achievements of
Cantor, Dedekind and Weierstrass in streamlining analysis, built into
the transformation they effected was a failure to provide a theory of
infinitesimals which were the bread and butter of 17--19 century
analysis, until Weierstrass.  By the yardstick of success in
formalization of classical analysis, arguably $\SPOT$, $\SCOT$ and
other theories developed in the present text are more successful
 than $\ZF$ and $\ZF+\ADC$.

One can learn to work in the universe of an $\st$-$\in$-set theory
intuitively.  This universe can be viewed as an extension of the
standard set-theoretic universe, either by a (soritical) predicate
$\st$ (the \emph{internal picture}) or by new ideal objects (the
\emph{standard picture}); see Fletcher et al.~\cite{F}, Sec.~5.5, for
a detailed discussion.%
\footnote{The internal picture fits well with the multiverse
philosophy of Hamkins~\cite{Ha12}.  One of his postulates is
Well-foundedness Mirage: Every universe $V$ appears to be ill-founded
from the point of view of some better universe (\cite{Ha12},
Sec. 9). The internal picture proposes something stronger but in the
same spirit: the ill-foundedness is witnessed by a predicate $\st$ for
which $(V, \in, \st)$ satisfies $\BST$.  This point is elaborated
in~\cite{F}, Section 7.3.}
This extended universe has a unique set of real numbers, constructed
in the usual way, and containing both standard and nonstandard
elements.  It also of course has choice functions and ultrafilters
over $\bbN$, just as the universe of $\ZFC$ does.  Mathematicians
concerned about $\AC$ can analyze their methods of proof and determine
whether a particular result can be carried out in one of the theories
considered in this paper.  For most if not all of ordinary
mathematics, both traditional and infinitesimal, the answer is likely
to be affirmative. It then follows from Theorems \textbf{A} -
\textbf{D} that these results are just as effective as those provable
in respectively $\ZF$ or $\ZF+\ADC$.



\bibliographystyle{amsalpha}
\end{document}